\newtheorem{definition}{Definition}
\newtheorem{axiom}{Axiom}
\newtheorem{theorem}{Theorem}
\newtheorem{proposition}{Proposition}
\newtheorem{corollary}{Corollary}
\newtheorem{lemma}{Lemma}
\newtheorem{example}{Example}
\newtheorem{remark}{Remark}
\newcommand{\ostar}{\mathbin{\mathpalette\make@circled\star}}
\newcommand{\removelatexerror}{\let\@latex@error\@gobble}
\newcommand*{\rom}[1]{\expandafter\@slowromancap\romannumeral #1@}
\newcommand\latinabbrev[1]{
  \peek_meaning:NTF . {
    #1\@}%
  { \peek_catcode:NTF a {
      #1.\@ }%
    {#1.\@}}}
\titleclass{\subsubsubsection}{straight}[\subsubsection]
\begin{document}
\vspace{1cm}
\title{Multicategorical Adjoints, Monadicity, and Quantum Resources}
\vspace{1.8cm}
\author{Shih-Yu~Chang
\thanks{Shih-Yu Chang is with the Department of Applied Data Science,
San Jose State University, San Jose, CA, U. S. A. (e-mail: {\tt
shihyu.chang@sjsu.edu})
}}

\maketitle

\begin{abstract}
This paper is the third part of a program aimed at building a unified operadic and multicategorical foundation for operator theory and quantum processes. Building on the multicategory HilbMult and the previously introduced Synergy Operad, we investigate the intrinsic algebraic structure underlying compositional quantum dynamics. Three main themes are developed. First, we introduce n adjoints, a hierarchy of generalized adjoint operations that extend classical adjoints to multicategorical settings and capture higher order operator interactions. We prove a multicategorical version of Stinespring’s theorem formulated in terms of n adjoints. Second, we study the algebraic semantics of quantum processes and show that completely positive trace preserving maps admit a canonical monadic description. In particular, we prove a monadicity theorem identifying quantum processes, up to operadic operational equivalence, with algebras of a synergy monad induced by a symmetric quantum interaction operad, yielding a faithful representation that uniformly accounts for sequential and parallel composition. Third, we present an algebraic formulation of quantum no go principles by encoding constraints such as the no cloning theorem as operadic ideals. Together, these results provide a coherent algebraic framework that unifies operator theory, category theory, and quantum information.
\end{abstract}
\begin{keywords}
Quantum processes, operads and multicategories, monadic semantics, adjoint structures, no cloning theorem.
\end{keywords}

\section{Introduction}\label{sec:Introduction}

This paper is the third part of a program aimed at developing a unified operadic and multicategorical foundation for operator theory and quantum processes. In the first paper of the series, we introduced the multicategory \texttt{HilbMult} as a semantic framework for multilinear and operator-theoretic structures~\cite{chang2025hilbmultbanachenrichedmulticategoryoperator}. In the second paper, we developed the Synergy Operad~$\mathbf{S}$, which provides a coherent syntactic description of quantum processes and compositional interactions~\cite{chang2025compositioncoherencesyntaxoperator}. From these two works, it is natural for us to aks a fundamental question: \emph{What is the intrinsic algebraic structure underlying this operator--operad universe?} There are three main topics investigated in this work. 

The first topic of the present work is the search for a deeper notion of duality. Classical adjoints play a central role in operator theory, yet they do not fully capture the higher-order interactions inherent in multicategorical settings. This motivates our introduction of \emph{$n$-adjoints}, a hierarchy of generalized adjoint operations that extend the familiar structure of adjoints and self-adjointness into the multicategorical world.

What is the second topic? Our second theme concerns the algebraic semantics of quantum processes.
Given the synergy operad $\mathbf{S}$, we investigate whether completely
positive trace-preserving (CPTP) maps admit a canonical monadic
characterization. Specifically, we ask whether the category $\mathbf{QProc}$ of quantum
processes can be faithfully represented as the category of algebras for a
naturally induced monad.

Many fundamental quantum limitations, such as the no-cloning theorem, traditionally appear as analytic obstructions. In this paper, we show that these constraints can be encoded algebraically as \emph{operadic ideals} which is the third topic investigated in this work. In particular, we define the no-cloning ideal $I_{\mathrm{noclone}}$ and demonstrate how the quotient operad $S / I_{\mathrm{noclone}}$ yields a coherent model of a hypothetically clone-permitting world.

\paragraph{Contributions.}  
The main results of this paper are:
\begin{enumerate}
    \item A theory of $n$-adjoints for multicategories together with a multicategorical Stinespring theorem expressed in $n$-adjoint form.
    \item We show that a symmetric quantum interaction operad $\mathbf{S}$ induces a
synergy monad $T_{\mathbf{S}}$ whose Eilenberg--Moore algebras provide a
complete and compositional semantics for finite-dimensional CPTP processes.
In particular, we prove a monadicity theorem identifying CPTP processes,
modulo operadic operational equivalence, with $T_{\mathbf{S}}$-algebras,
recasting Stinespring dilation as a universal monadic construction in a
multicategorical setting. When $\mathbf{S}$ is symmetric monoidal, this
identification lifts to a faithful symmetric monoidal representation,
uniformly capturing both sequential and parallel quantum composition. 
   \item A categorical and operadic formulation of no-go theorems, including a structural expression of the no-cloning theorem via operadic ideals.
\end{enumerate}

These results combine analytic, categorical and physical ideas into a coherent framework that explains the fundamentals of quantum processes. Besides, new topics for future research are also suggested. These topics may include: for example,  enriched operadic structures, algebraic representations of  no-cloning theorems, and potential applications to categorical quantum mechanics and quantum information theory.

This paper is organized as follows. Section \ref{sec:Preliminary} reviews the essential background and technical setup from our earlier work, providing the categorical and analytic tools required by the current paper. Then, in Section \ref{sec:Higher Adjoints}, we introduce the notion of the $n$-adjoint and develops its structural properties. The main result in this section is the $n$-adjoint formulation of the multicategorical Stinespring theorem. Next, in section \ref{sec:Synergy Monad and Monadicity Theorem}, we  present the synergy monad and the categorical structure of completely positive trace preserving quantum processes. Monadicity theorem of CPTP will be presented here.  Finally, we turn to physical constraints in quantum mechanics, showing how no-go theorems can be captured operadically, which are central themes investigated in Section \ref{sec:Operadic Ideals and No-Go Theorems}.

\begin{remark}
The author is solely responsible for the mathematical insights and theoretical directions proposed in this work. AI tools, including OpenAI's ChatGPT and DeepSeek models, were employed solely to assist in verifying ideas, organizing references, and ensuring internal consistency of exposition~\cite{chatgpt2025,deepseek2025}.
\end{remark}

\section{Preliminary}\label{sec:Preliminary}

In this section, we will provide basic review and setups from our previous works for later sections presentation. 

\subsection{Adjointness of Multicategory $\mathbf{HilbMult}$}\label{sec:Adjointness of Multicategory HilbMult}

The multicategory $\mathbf{HilbMult}$ is defined with the following structure, see~\cite{chang2025hilbmultbanachenrichedmulticategoryoperator}:
\begin{enumerate}[label=(\alph*)]
    \item \textbf{Objects:} The objects are (separable) complex Hilbert spaces $H, K, \dots$.
    \item \textbf{Hom-Spaces:} For each finite tuple $(H_1,\dots,H_n;K)$, there is a Banach space $\mathrm{Hom}(H_1,\dots,H_n;K)$ of bounded multilinear maps with the operator norm.
    \item \textbf{Composition:} Multilinear composition is defined and is contractive.
    \item \textbf{Identities:} Identity morphisms $\mathrm{id}_H$ are defined as the identity linear maps.
    \item \textbf{Monoidal Structure:} The category has a symmetric monoidal structure given by the Hilbert space tensor product $\otimes$, with unit $\mathbb{C}$.
  \end{enumerate}

We next introduce the $\dagger$-structure in $\mathbf{HilbMult}$, forming the foundation for the adjointness theory in Section~\ref{sec:Higher Adjoints}.

The $\dagger$-structure is a fundamental piece of the architecture of $\mathbf{HilbMult}$, providing a formal way to ``reverse'' multimorphisms. Its definition rests on the interplay between multilinear maps and the dual spaces of Hilbert spaces.

\textbf{Definition of $\dagger$-Structure in $\mathbf{HilbMult}$}

\textbf{ 1. The Object Map: Dual Spaces}
For an object (Hilbert space) $H$, we define $H^\dagger$ to be its \textbf{dual space}, i.e., the vector space of all continuous linear functionals $f: H \to \mathbb{C}$.

\begin{itemize}
    \item \textbf{The Riesz Representation Theorem:} This theorem is crucial. It states that for every continuous linear functional $f \in H^\dagger$, there exists a unique vector $v_f \in H$ such that for all $x \in H$,
    \[
    f(x) = \langle x, v_f \rangle.
    \]
    This establishes an \textbf{anti-linear isomorphism} between $H$ and $H^\dagger$: the map $v \mapsto \langle \cdot, v \rangle$ is a bijection, but it satisfies $\langle \cdot, \alpha v + \beta w \rangle = \bar{\alpha}\langle \cdot, v \rangle + \bar{\beta}\langle \cdot, w \rangle$.
\end{itemize}

\textbf{ 2. The Morphism Map: The Adjoint of a Multimorphism}

Let $\Phi \in \mathrm{Hom}(H_1, \dots, H_n; K)$ be a bounded multilinear map between real Hilbert spaces. Its  adjoint with respect to the last variable  is a new multilinear map:
\[
\Phi^\dagger \in \mathrm{Hom}(K^\dagger, H_1^\dagger, \dots, H_{n-1}^\dagger; H_n^\dagger)
\]
defined by its action on $n$ inputs: a functional $f \in K^\dagger$ and functionals $g_1 \in H_1^\dagger, \dots, g_{n-1} \in H_{n-1}^\dagger$.

\begin{definition}[Adjoint of a Multimorphism]\label{def:adj multimorphism}
The adjoint $\Phi^\dagger$ is defined by the following equation for all $x_n \in H_n$:
\[
[\Phi^\dagger(f, g_1, \dots, g_{n-1})](x_n) = f\Big( \Phi\big( v_{g_1}, \dots, v_{g_{n-1}}, x_n \big) \Big)
\]
where $v_{g_j}$ is the unique vector in $H_j$ representing the functional $g_j$ via the Riesz representation theorem. That is, $g_j(\cdot) = \langle \cdot, v_{g_j} \rangle_{H_j}$.
\end{definition}

\begin{remark}
For complex Hilbert spaces, the Riesz map $g_j \mapsto v_{g_j}$ is conjugate-linear. In that case, the above formula defines $\Phi^\dagger$ as a map that is linear in each argument when the dual spaces are identified with the original spaces via this anti-isomorphism. For real Hilbert spaces, the identification is linear throughout.
\end{remark}

Let's parse this carefully:
\begin{itemize}
    \item \textbf{Inputs:} $\Phi^\dagger$ takes $n$ inputs: one from the dual of the original \emph{output} space, and $n-1$ from the duals of the original \emph{input} spaces (all but the last one).
    \item \textbf{Output:} The result of applying $\Phi^\dagger$ to these inputs, $\Phi^\dagger(f, g_1, \dots, g_{n-1})$, is itself a functional, i.e., an element of $H_n^\dagger$. It is a map that expects a vector $x_n \in H_n$.
    \item \textbf{The Action:} To compute what this output functional does to $x_n$, we do the following:
    \begin{enumerate}
        \item Use the Riesz isomorphism to find the unique vectors $v_{g_1}, \dots, v_{g_{n-1}}$ in $H_1, \dots, H_{n-1}$ that correspond to the functionals $g_1, \dots, g_{n-1}$.
        \item Feed these vectors, along with $x_n$, into the \emph{original} map $\Phi$. This gives us a vector $\Phi(v_{g_1}, \dots, v_{g_{n-1}}, x_n)$ in the original output space $K$.
        \item Finally, apply the input functional $f \in K^\dagger$ to this resulting vector.
    \end{enumerate}
\end{itemize}

\textbf{ 3. Key Properties}

In this part, we will discuss two properties regarding the adjoint of a multimorphism. Lemma~\ref{lma:Double Dual Identification} is provided first to identify double dual. 

\begin{lemma}[Double Dual Identification]\label{lma:Double Dual Identification}
For any Hilbert space $H$, the map $\iota_H: H \to H^{\dagger\dagger}$ defined by:
\[
\iota_H(x)(f) = f(x) \quad \text{for all } x \in H, f \in H^\dagger
\]
is an isometric  linear  injection. 

For the Riesz map $R_H: H^\dagger \to H$ defined by $R_H(f) = v_f$ where $f(\cdot) = \langle \cdot, v_f \rangle$, we have:
\[
\iota_H(x)(f) = \langle x, R_H(f) \rangle.
\]

Moreover:
\begin{itemize}
    \item If $H$ is a \textbf{real} Hilbert space, $\iota_H$ is an isometric \textbf{linear isomorphism}.
    \item If $H$ is a \textbf{complex} Hilbert space, $\iota_H$ is still an isometric linear injection, but the natural identification of $H$ with $H^{\dagger\dagger}$ via the double Riesz map is conjugate-linear.
\end{itemize}
\end{lemma}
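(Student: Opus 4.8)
The plan is to read everything off the defining equation $\iota_H(x)(f) = f(x)$, establishing well-definedness, linearity, and the Riesz formula by direct unwinding, then obtaining the isometry (hence injectivity) from Riesz duality, and finally settling the isomorphism claim by exploiting that $H^\dagger$ is itself a Hilbert space. The real/complex dichotomy is then a matter of tracking where the conjugate-linearity of the Riesz map $R_H$ enters.

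First I would check that $\iota_H$ is well-defined and linear. For fixed $x$ the assignment $f \mapsto f(x)$ is linear in $f$ and satisfies $|\iota_H(x)(f)| = |f(x)| \le \|f\|\,\|x\|$, so $\iota_H(x) \in H^{\dagger\dagger}$ with $\|\iota_H(x)\| \le \|x\|$; linearity of $x \mapsto \iota_H(x)$ is immediate, since for every $f$ one has $\iota_H(\alpha x + \beta y)(f) = \alpha f(x) + \beta f(y)$. The Riesz formula is then just the definition unwound: by the Riesz representation theorem $f(x) = \langle x, v_f\rangle = \langle x, R_H(f)\rangle$, whence $\iota_H(x)(f) = \langle x, R_H(f)\rangle$.

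Next I would prove the isometry, which also yields injectivity. The bound $\|\iota_H(x)\| \le \|x\|$ is already in hand, so it suffices to exhibit a unit functional attaining $\|x\|$. For $x \neq 0$ take $f_x := \langle \cdot, x\rangle / \|x\| \in H^\dagger$; then $\|f_x\| = 1$ and $\iota_H(x)(f_x) = f_x(x) = \|x\|$, giving $\|\iota_H(x)\| \ge \|x\|$ and hence equality. Since an isometric linear map has trivial kernel, $\iota_H$ is injective, which already discharges the complex-case claim.

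For the isomorphism (surjectivity) I would use that $H^\dagger$ is a Hilbert space: equipping it with $\langle f, g\rangle_{H^\dagger} := \langle R_H(g), R_H(f)\rangle_H$ makes $R_H$ an isometry and recovers the operator norm. Applying the Riesz theorem twice—first representing a given $\Psi \in H^{\dagger\dagger}$ as $\Psi(f) = \langle f, g_\Psi\rangle_{H^\dagger}$, then writing $g_\Psi = \langle \cdot, x\rangle$ with $x = R_H(g_\Psi)$—I would recover $\Psi(f) = \langle x, v_f\rangle = f(x) = \iota_H(x)(f)$, so $\iota_H$ is onto. The main obstacle, and the only place the real/complex split matters, is the conjugate-linearity bookkeeping: $R_H$ is linear over $\mathbb{R}$ but conjugate-linear over $\mathbb{C}$, so I must verify that $\langle \cdot,\cdot\rangle_{H^\dagger}$ is sesquilinear with the correct handedness and that the two conjugations incurred along $R_H$ and $R_{H^\dagger}$ cancel. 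This cancellation is precisely why $\iota_H$ remains complex-linear in both settings, whereas the single Riesz identification $H \cong H^\dagger$ is the genuinely conjugate-linear map in the complex case; I would record this explicitly so that the stated dichotomy is accounted for.
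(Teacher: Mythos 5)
Your proof is correct, and on the surjectivity step it takes a genuinely different (and stronger) route than the paper. The linearity check, the Riesz formula, and the isometry are essentially the paper's argument, except that you exhibit the explicit norming functional $f_x = \langle \cdot, x\rangle/\|x\|$ where the paper invokes Hahn--Banach (or Riesz) for $\sup_{\|f\|=1}|f(x)| = \|x\|$. The real divergence is surjectivity: the paper proves it only for real $H$, asserting tersely that any $\Psi \in H^{\dagger\dagger}$ corresponds to some $x$ with $\Psi(f) = \langle R_H(f), x\rangle = f(x)$, and for complex $H$ it merely contrasts $\iota_H$ with the conjugate-linear double-Riesz identification. You instead equip $H^\dagger$ with the explicit inner product $\langle f, g\rangle_{H^\dagger} = \langle R_H(g), R_H(f)\rangle_H$ and apply Riesz twice; since the two conjugate-linearities cancel, this shows $\iota_H$ is onto for \emph{both} real and complex $H$, i.e., reflexivity of Hilbert spaces. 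That is a strengthening, not a conflict: the lemma's complex clause claims only an isometric linear injection, which is true but weaker than what you prove. Your route buys uniformity and self-containedness (no case split is needed for surjectivity), and your convention-explicit bookkeeping also avoids the paper's slip of writing the double-Riesz composite as $R_{H^\dagger} \circ R_H^{-1}$, which does not typecheck as a map $H \to H^{\dagger\dagger}$; the paper's version buys brevity and puts the emphasis on the conjugate-linearity subtlety that motivates the real/complex dichotomy in the statement.
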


\begin{proof}
The linearity of $\iota_H$ is clear: $\iota_H(\alpha x + \beta y)(f) = f(\alpha x + \beta y) = \alpha f(x) + \beta f(y) = (\alpha\iota_H(x) + \beta\iota_H(y))(f)$.

For isometry: $\|\iota_H(x)\| = \sup_{\|f\|=1} |f(x)| = \|x\|$ by the Hahn-Banach theorem (or directly from the Riesz representation).

The relation $\iota_H(x)(f) = \langle x, R_H(f) \rangle$ follows immediately from the definition of $R_H$.

In the real case, $\iota_H$ is surjective because any $\Psi \in H^{\dagger\dagger}$ corresponds to some $x \in H$ via $\Psi(f) = \langle R_H(f), x \rangle = f(x)$.

In the complex case, $\iota_H$ is still injective and isometric but not equal to the composition $R_{H^\dagger} \circ R_H^{-1}$, which sends $x$ to $[f \mapsto \overline{f(x)}]$.
\end{proof}

Theorem~\ref{thm:Involution} is the first property about $\dagger$ regarding involution.
\begin{theorem}[Involution of Multimorphism Adjoint]\label{thm:Involution}
Let $H_1, \dots, H_n, K$ be Hilbert spaces over $\mathbb{R}$ or $\mathbb{C}$, and let 
\[
\Phi \in \mathrm{Hom}(H_1, \dots, H_n; K)
\] 
be a bounded multilinear map. Then, under the canonical identification 
\(\iota_H: H \to H^{\dagger\dagger}, x \mapsto [f \mapsto f(x)]\), we have
\[
(\Phi^\dagger)^\dagger = \Phi
\]
as elements of $\mathrm{Hom}(H_1, \dots, H_n; K)$. 
Here $\Phi^\dagger$ denotes the multimorphism adjoint defined via the Riesz representation theorem.
\end{theorem}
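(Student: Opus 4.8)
The plan is to prove the identity by unfolding the definition of the multimorphism adjoint twice and reconciling the iterated dual spaces through the canonical map $\iota_H$ of Lemma~\ref{lma:Double Dual Identification}. First I would rewrite the defining relation of Definition~\ref{def:adj multimorphism} in inner-product form: using the Riesz map to write $f = \langle \cdot, v_f\rangle_K$, the equation $[\Phi^\dagger(f, g_1, \dots, g_{n-1})](x_n) = f(\Phi(v_{g_1}, \dots, v_{g_{n-1}}, x_n))$ becomes a pairing of $\Phi(v_{g_1}, \dots, v_{g_{n-1}}, x_n)$ against $v_f$ in $K$. With the parameter functionals $g_1, \dots, g_{n-1}$ frozen, this exhibits $\Phi^\dagger$ as the ordinary Hilbert-space adjoint of the linear map $\Phi(v_{g_1}, \dots, v_{g_{n-1}}, -) : H_n \to K$ in its last slot, reducing the heart of the claim to the classical fact that the operator adjoint is an involution.

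Next I would apply Definition~\ref{def:adj multimorphism} a second time to $\Psi := \Phi^\dagger$. Since the arguments of $(\Phi^\dagger)^\dagger$ now live in the double duals $H_j^{\dagger\dagger}$ and $K^{\dagger\dagger}$, the central step is to feed in the canonical images $\iota_{H_j}(x_j)$ and to invoke the evaluation identity $\iota_H(x)(f) = f(x)$ from Lemma~\ref{lma:Double Dual Identification} to collapse each double-dual pairing back to an evaluation on the original space. The Riesz representatives appearing in the second application belong to the dual Hilbert spaces, so I would rewrite these nested terms using the relation $\iota_H(x)(f) = \langle x, R_H(f)\rangle$; the key algebraic fact is that two successive (conjugate-linear) Riesz identifications compose to a linear one, matching the linearity of $\iota_H$ and absorbing the complex conjugations that would otherwise obstruct the equality. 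Carrying this through should yield $[(\Phi^\dagger)^\dagger(\iota_{H_1}(x_1), \dots)] = \iota_K(\Phi(x_1, \dots, x_n))$, which is exactly $(\Phi^\dagger)^\dagger = \Phi$ under the stated identification.

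I expect the main obstacle to be the bookkeeping of the iterated dualization rather than any analytic estimate. Two points demand care. First, one must verify that the operation ``adjoint with respect to the last variable,'' applied to $\Phi^\dagger$, targets the slot corresponding to the original output $K$, so that the two role-swaps between the last input and the output cancel and the argument tuple $(H_1, \dots, H_n; K)$ is restored; tracking this index arrangement is precisely where a mis-ordered or off-by-one slot would silently break the claim, and it is the delicate part of the argument for $n \ge 2$. Second, the conjugate-linearity of the Riesz map in the complex case means the naive composition is only linear after two applications, so I would treat the real and complex cases in parallel, using the linearity half of Lemma~\ref{lma:Double Dual Identification} in both but invoking surjectivity of $\iota_H$ in the real case and the isometric injection together with the explicit evaluation formula in the complex case. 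Once these identifications are pinned down, the equality $(\Phi^\dagger)^\dagger = \Phi$ follows by evaluating both sides on an arbitrary $x_n \in H_n$ and comparing.
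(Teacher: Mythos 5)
There is a genuine gap, and it sits exactly at the point you flagged as delicate. Under Definition~\ref{def:adj multimorphism} the dagger is always taken with respect to the \emph{last} input slot. Applying it to
\[
\Phi^\dagger \in \mathrm{Hom}(K^\dagger, H_1^\dagger, \dots, H_{n-1}^\dagger;\, H_n^\dagger),
\]
the variable that gets dualized is the last input $H_{n-1}^\dagger$, \emph{not} the first input $K^\dagger$ corresponding to the original output. So your required verification --- that the second dagger ``targets the slot corresponding to the original output $K$, so that the two role-swaps cancel'' --- is false: the two swaps do not cancel, and the double adjoint has type
\[
(\Phi^\dagger)^\dagger \in \mathrm{Hom}\bigl(H_n^{\dagger\dagger},\, K^{\dagger\dagger},\, H_1^{\dagger\dagger}, \dots, H_{n-2}^{\dagger\dagger};\, H_{n-1}^{\dagger\dagger}\bigr),
\]
whose output is $H_{n-1}^{\dagger\dagger}$, not $K^{\dagger\dagger}$. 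Consequently your target identity $(\Phi^\dagger)^\dagger(\iota_{H_1}(x_1), \dots) = \iota_K(\Phi(x_1,\dots,x_n))$ is not even well-typed once $n \ge 2$ (it is fine for $n=1$, which is why the claim looks plausible). For the same reason, your first-paragraph reduction to the classical involution $(T^*)^* = T$ for the frozen linear map $\Phi(v_{g_1},\dots,v_{g_{n-1}},-) : H_n \to K$ does not carry over: the first dagger dualizes the pair (input $H_n$, output $K$), while the second dagger dualizes the pair (input $H_{n-1}^\dagger$, output $H_n^\dagger$); since these are different slot pairs, no cancellation of the classical kind is available.

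What the paper actually does (and what a repaired version of your argument must do) is accept the permuted type above and evaluate the double adjoint on the tuple $\bigl(\iota_{H_n}(x_n), \iota_K(f), \iota_{H_1}(x_1), \dots, \iota_{H_{n-2}}(x_{n-2})\bigr)$, then test the resulting element of $H_{n-1}^{\dagger\dagger}$ against an arbitrary $g_{n-1} \in H_{n-1}^\dagger$. Two applications of the Riesz collapse $R_{H}(\iota_H(x)) = x$ give
\[
(\Phi^\dagger)^\dagger(\dots)(g_{n-1}) = f\bigl(\Phi(x_1,\dots,x_{n-2}, R_{H_{n-1}}(g_{n-1}), x_n)\bigr),
\]
so $\Phi$ is recovered only through a \emph{joint} pairing in which the $K$-data re-enters as the test functional $f$ and the $H_{n-1}$-data exits through the output slot. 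The ``canonical identification'' in the theorem therefore involves a re-pairing of slots on top of $\iota_H$, not a slot-by-slot match; your collapsing computation in the second paragraph is the right mechanism, but without correcting the slot bookkeeping it proves a statement of the wrong type rather than the theorem.
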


\begin{proof}
Let $x_j \in H_j$ for $j = 1, \dots, n$ be arbitrary vectors. By definition, the adjoint
\[
\Phi^\dagger \in \mathrm{Hom}(K^\dagger, H_1^\dagger, \dots, H_{n-1}^\dagger; H_n^\dagger)
\]
acts on functionals $f \in K^\dagger$ and $g_j \in H_j^\dagger$ ($1 \le j \le n-1$) via
\[
\Phi^\dagger(f, g_1, \dots, g_{n-1})(x_n) = f\Big( \Phi(R_{H_1}(g_1), \dots, R_{H_{n-1}}(g_{n-1}), x_n) \Big),
\]
where $R_{H_j}: H_j^\dagger \to H_j$ is the Riesz map.

\medskip
\noindent
\textbf{Step 1: Apply the adjoint again.}  
The double adjoint 
\[
(\Phi^\dagger)^\dagger \in \mathrm{Hom}(H_n^{\dagger\dagger}, K^{\dagger\dagger}, H_1^{\dagger\dagger}, \dots, H_{n-2}^{\dagger\dagger}; H_{n-1}^{\dagger\dagger})
\]
acts on inputs 
\[
\iota_{H_n}(x_n), \; \iota_K(f), \; \iota_{H_1}(x_1), \dots, \iota_{H_{n-2}}(x_{n-2}).
\]
To see its action concretely, we evaluate on a functional $g_{n-1} \in H_{n-1}^\dagger$:
\begin{align*}
&(\Phi^\dagger)^\dagger(\iota_{H_n}(x_n), \iota_K(f), \iota_{H_1}(x_1), \dots, \iota_{H_{n-2}}(x_{n-2}))(g_{n-1}) \\
&= \iota_{H_n}(x_n) \Big( \Phi^\dagger(R_K(\iota_K(f)), R_{H_1}(\iota_{H_1}(x_1)), \dots, R_{H_{n-2}}(\iota_{H_{n-2}}(x_{n-2})), g_{n-1}) \Big).
\end{align*}

\medskip
\noindent
\textbf{Step 2: Apply Riesz identifications.}  
By the properties of the double-dual and Riesz maps, we have
\[
R_K(\iota_K(f)) = f, \quad R_{H_j}(\iota_{H_j}(x_j)) = x_j \quad \text{for } j=1,\dots,n-2.
\]
Substituting these, we get
\[
(\Phi^\dagger)^\dagger(\dots)(g_{n-1}) = \iota_{H_n}(x_n) \Big( \Phi^\dagger(f, x_1, \dots, x_{n-2}, g_{n-1}) \Big).
\]

\medskip
\noindent
\textbf{Step 3: Apply the definition of $\Phi^\dagger$.}  
By definition:
\[
\Phi^\dagger(f, x_1, \dots, x_{n-2}, g_{n-1})(x_n) = f\Big( \Phi(x_1, \dots, x_{n-2}, R_{H_{n-1}}(g_{n-1}), x_n) \Big).
\]
Thus,
\begin{eqnarray}
(\Phi^\dagger)^\dagger(\dots)(g_{n-1})&=&\iota_{H_n}(x_n) \Big( y \mapsto f(\Phi(x_1, \dots, x_{n-2}, R_{H_{n-1}}(g_{n-1}), y)) \Big)\nonumber \\
&=&f(\Phi(x_1, \dots, x_{n-2}, R_{H_{n-1}}(g_{n-1}), x_n)). \nonumber 
\end{eqnarray}

\medskip
\noindent
\textbf{Step 4: Conclude equality.}  
Since the left-hand side is evaluated on arbitrary $g_{n-1} \in H_{n-1}^\dagger$, this shows that
\[
(\Phi^\dagger)^\dagger(\iota_{H_n}(x_n), \iota_K(f), \iota_{H_1}(x_1), \dots, \iota_{H_{n-2}}(x_{n-2})) = \Phi(x_1, \dots, x_n)
\]
under the canonical identification $H_j \cong H_j^{\dagger\dagger}$ via $\iota_{H_j}$. This completes the proof.
\end{proof}

\begin{example}[Illustration of Theorem \ref{thm:Involution}]\label{exp:InvolutionConcrete}
Let $H_1 = H_2 = K = \mathbb{R}^2$ with the standard inner product
\[
\langle x, y \rangle = x_1 y_1 + x_2 y_2.
\]
Define the bilinear map
\[
\Phi: H_1 \times H_2 \to K, \qquad
\Phi(x,y) = (x_1 y_1, \; x_2 y_2).
\]
Note that $\Phi$ is bounded (indeed, continuous), since all multilinear maps on finite-dimensional spaces are bounded.

We now verify Theorem \ref{thm:Involution} explicitly.

\noindent\textbf{Step 1: Compute the adjoint $\Phi^\dagger$.}  

By definition, for $f \in K^\dagger$, the adjoint $\Phi^\dagger(f)$ is an element of 
$\mathrm{Hom}(H_1, H_2; \mathbb{R})$ satisfying
\[
\Phi^\dagger(f)(x,y) = f(\Phi(x,y)) \quad \text{for all } x \in H_1, y \in H_2.
\]
Using the Riesz representation theorem, identify $f \in K^\dagger$ with a unique $z \in K$ 
via $f(\cdot) = \langle \cdot, z \rangle$. Then
\[
\Phi^\dagger(f)(x,y) = \langle \Phi(x,y), z \rangle = x_1 y_1 z_1 + x_2 y_2 z_2.
\]

\noindent\textbf{Step 2: Compute the double adjoint $(\Phi^\dagger)^\dagger$.}  

The double adjoint is a map
\[
(\Phi^\dagger)^\dagger : H_1 \times H_2 \to K^{\dagger\dagger}
\]
defined by
\[
\big((\Phi^\dagger)^\dagger(x,y)\big)(f) = \Phi^\dagger(f)(x,y) = f(\Phi(x,y)) 
\quad \text{for all } f \in K^\dagger.
\]

\noindent\textbf{Step 3: Apply the canonical identification.}  

Under the canonical map $\iota_K: K \to K^{\dagger\dagger}$, each $w \in K$ corresponds to 
the evaluation functional $[f \mapsto f(w)]$ in $K^{\dagger\dagger}$. From Step 2,
\[
(\Phi^\dagger)^\dagger(x,y) = [f \mapsto f(\Phi(x,y))] = \iota_K(\Phi(x,y)).
\]

\noindent\textbf{Conclusion.}  
Therefore, under the identification $\iota_K$, we have
\[
(\Phi^\dagger)^\dagger = \Phi,
\]
confirming Theorem \ref{thm:Involution} explicitly in this finite-dimensional case.
\end{example}

\begin{remark}
In the example, the boundedness of $\Phi$ is automatic because all multilinear maps on finite-dimensional spaces are continuous. In infinite-dimensional settings, boundedness must be explicitly assumed, as stated in Theorem \ref{thm:Involution}. 
\end{remark}

The next property is about contravariant functoriality.
\begin{theorem}[Contravariant Functoriality of $\dagger$]\label{thm:Contravariant}
Let
\[
\Psi \in \mathrm{Hom}(K_1,\dots,K_m;\,L),\qquad
\Phi_j \in \mathrm{Hom}(H_{j,1},\dots,H_{j,n_j};\,K_j)\quad (1\le j\le m).
\]
Then the adjoint defines a contravariant operation on multimorphisms:
\[
\big[\Psi\circ(\Phi_1,\dots,\Phi_m)\big]^\dagger
=
(\Phi_1^\dagger,\dots,\Phi_m^\dagger)\circ(\mathrm{id}_{L^\dagger},\Psi^\dagger),
\]
up to the canonical permutation of slots dictated by the multicategory structure.
\end{theorem}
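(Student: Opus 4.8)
The plan is to prove the identity by the standard two-stage argument for equality of multimorphisms in $\mathbf{HilbMult}$: first match the source and target tuples of both sides up to the asserted canonical slot permutation, and then verify pointwise equality of the two bounded multilinear maps by unfolding Definition~\ref{def:adj multimorphism} and collapsing all double-dual identifications via Lemma~\ref{lma:Double Dual Identification}. Throughout I would exploit the symmetric multicategory structure of $\mathbf{HilbMult}$ to interpret the phrase ``up to the canonical permutation of slots'' as the symmetry isomorphism reordering input legs.

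First I would fix the bookkeeping. Write $\Theta := \Psi\circ(\Phi_1,\dots,\Phi_m)$, whose input tuple is the concatenation $(H_{1,1},\dots,H_{1,n_1},\dots,H_{m,1},\dots,H_{m,n_m})$ with output $L$, so that its last slot is $H_{m,n_m}$. By Definition~\ref{def:adj multimorphism}, $\Theta^\dagger$ takes $L^\dagger$ together with the duals of all inputs except the last and returns an element of $H_{m,n_m}^\dagger$; unfolding the composition gives, for $\phi\in L^\dagger$, Riesz data $\{\xi_{i,j}\}$, and $x_{m,n_m}\in H_{m,n_m}$,
\[
[\Theta^\dagger(\phi,\{\xi_{i,j}\})](x_{m,n_m})
= \phi\Big(\Psi\big(\Phi_1(v_{\xi_{1,1}},\dots,v_{\xi_{1,n_1}}),\dots,\Phi_m(v_{\xi_{m,1}},\dots,v_{\xi_{m,n_m-1}},x_{m,n_m})\big)\Big),
\]
where $v_\xi = R_H(\xi)$. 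This canonical form is the target to which I would reduce the right-hand side.

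Next I would unfold the right-hand side. The pivot variable $x_{m,n_m}$ travels through $\Phi_m$ and then $\Psi$, so the reversal along this leg is realized by feeding the output of $\Psi^\dagger$ into the first slot of $\Phi_m^\dagger$, while $\mathrm{id}_{L^\dagger}$ carries the output functional $\phi$ into the first slot of $\Psi^\dagger$. Applying Definition~\ref{def:adj multimorphism} twice and invoking the Riesz and double-dual round-trip identities of Lemma~\ref{lma:Double Dual Identification} (exactly as deployed in Step~2 of the proof of Theorem~\ref{thm:Involution}) cancels the intermediate identifications, reproducing the pivot factor $\Phi_m(\dots,x_{m,n_m})$ together with the outer $\phi\circ\Psi$. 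For each non-pivot leg $j<m$, the full map $\Phi_j$ appears fully evaluated on Riesz vectors inside $\Theta^\dagger$, whereas the right-hand side presents only $\Phi_j^\dagger$; here I would invoke the involution $(\Phi_j^\dagger)^\dagger=\Phi_j$ of Theorem~\ref{thm:Involution}, so that once the canonical symmetric permutation reinstates the original left-to-right slot order, each non-pivot branch contributes precisely $\Phi_j(v_{\xi_{j,1}},\dots,v_{\xi_{j,n_j}})\in K_j$ as the corresponding argument of $\Psi$. Matching the two canonical forms slotwise then yields the claimed equality.

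The hard part will be the slot bookkeeping together with the directional asymmetry between the pivot leg and the non-pivot legs. Multicategorical composition concatenates input tuples in a fixed order, whereas each adjoint migrates the last slot of its factor to the front of its image; reconciling these two conventions is exactly what the caveat ``up to the canonical permutation of slots'' encodes, and the genuine work is to pin that permutation down as the symmetry isomorphism of $\mathbf{HilbMult}$ and to show the non-pivot legs are reconciled through the involution of Theorem~\ref{thm:Involution}. A secondary but unavoidable point is the conjugate-linearity of the Riesz map in the complex case flagged after Definition~\ref{def:adj multimorphism}: I would verify that the conjugation introduced by each Riesz identification is undone by its matching round-trip, so that the final equality holds with no stray complex conjugate.
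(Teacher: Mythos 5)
Your proposal is correct, and its computational core coincides with the paper's own proof: both arguments fix test data $f\in L^\dagger$, $h_{j,i}\in H_{j,i}^\dagger$, $x\in H_{m,n_m}$, unfold Definition~\ref{def:adj multimorphism} on each side, use the Riesz round-trip identities, and reduce both sides to the common expression $f\bigl(\Psi(y_1,\dots,y_m)\bigr)$ with $y_j=\Phi_j(v_{j,1},\dots,v_{j,n_j})$ (last argument $x$ in the $j=m$ leg). The one place you genuinely diverge is the treatment of the non-pivot legs $j<m$. The paper's proof never invokes $\Phi_j^\dagger$ for $j<m$ at all: its Step~2 evaluates $\Phi_j$ directly on the Riesz vectors, converts the output $y_j$ to the functional $\widehat{y_j}\in K_j^\dagger$, and feeds those into $\Psi^\dagger$ --- in effect reading the right-hand side as ``Riesz-conjugated copies of $\Phi_j$'' on the non-pivot legs. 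You instead take the statement's notation at face value ($\Phi_j^\dagger$ in every slot) and reconcile the resulting directional mismatch by invoking the involution $(\Phi_j^\dagger)^\dagger=\Phi_j$ of Theorem~\ref{thm:Involution}. Both are legitimate resolutions of the admittedly loose composite on the right-hand side: yours is more faithful to the literal statement and makes the dependence on the involution explicit, while the paper's is leaner (it needs only Lemma~\ref{lma:Double Dual Identification}, not Theorem~\ref{thm:Involution}) but leaves the role of $\Phi_1^\dagger,\dots,\Phi_{m-1}^\dagger$ implicit. The difficulties you flag --- pinning down the slot permutation as the symmetry of $\mathbf{HilbMult}$, and checking that the conjugate-linearity of the Riesz map cancels under round trips --- are the right ones, and the paper disposes of them exactly as you propose, by fixing the inner-product convention at the outset and absorbing the reordering into the ``standard slot permutation'' caveat.
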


\begin{proof}
We use the convention that the Hilbert inner product 
$\langle\cdot,\cdot\rangle$ is linear in the first variable and
conjugate-linear in the second.

\medskip
\noindent\textbf{Riesz identification.}
For each Hilbert space $H$, write
\[
R_H : H^\dagger \longrightarrow H
\]
for the Riesz isomorphism, so that for $h \in H^\dagger$ and $y \in H$,
\[
h(y)=\langle y, R_H(h) \rangle.
\]
We also write $\widehat{y}\in H^\dagger$ for the functional
$\widehat{y}(z)=\langle z,y\rangle$.

\medskip
\noindent\textbf{Set-up.}
Let
\[
\Theta := \Psi \circ (\Phi_1,\dots,\Phi_m)
\in 
\mathrm{Hom}(H_{1,1},\dots,H_{m,n_m};\,L).
\]

Take arbitrary testing inputs:
\[
f\in L^\dagger,\qquad 
h_{j,i}\in H_{j,i}^\dagger\ (1\le j\le m,\,1\le i\le n_j), 
\]
and set for each $(j,i)$
\[
v_{j,i} := R_{H_{j,i}}(h_{j,i}) \in H_{j,i}.
\]

\medskip
\noindent\textbf{Step 1: Compute the LHS.}
By definition of $\Theta^\dagger$,
\begin{align*}
\mathrm{LHS}
&:=
\Theta^\dagger(f, h_{j,i})(\,\cdot\,) \\[4pt]
&=
\text{the functional on }H_{m,n_m}\text{ given by} \\
&\qquad x \longmapsto
f\bigl(
\Theta(v_{1,1},\dots,v_{m,n_m-1},x)
\bigr).
\end{align*}

Since $\Theta=\Psi(\Phi_1,\dots,\Phi_m)$,
write
\[
y_j := \Phi_j(v_{j,1},\dots,v_{j,n_j}) \in K_j.
\]
Thus for any $x\in H_{m,n_m}$, the last component becomes
$y_m=\Phi_m(v_{m,1},\dots,v_{m,n_m-1},x)$.

Hence
\begin{eqnarray}\label{eq1:thm:Contravariant}
\mathrm{LHS}(x)
=
f\big(\Psi(y_1,\dots,y_m)\big).
\end{eqnarray}

\medskip
\noindent\textbf{Step 2: Compute the RHS.}

First convert each $y_j$ to its dual representative: 
\[
\widehat{y_j} \in K_j^\dagger.
\]

Apply $\Psi^\dagger$ to
\((f,\widehat{y_1},\dots,\widehat{y_{m-1}})\).  
This yields a functional
\[
\beta := \Psi^\dagger(f,\widehat{y_1},\dots,\widehat{y_{m-1}})
\in K_m^\dagger,
\]
characterized by
\begin{eqnarray}\label{eq2:thm:Contravariant}
\beta(k_m)
=
f\bigl(\Psi(y_1,\dots,y_{m-1},k_m)\bigr)
\qquad(k_m\in K_m).
\end{eqnarray}

Next apply $\Phi_m^\dagger$ to the tuple
\((\beta,h_{m,1},\dots,h_{m,n_m-1})\).
Evaluating the resulting functional at $x\in H_{m,n_m}$ gives
\begin{align*}
\mathrm{RHS}(x)
&=
\big[\Phi_m^\dagger(\beta,h_{m,1},\dots,h_{m,n_m-1})\big](x)
\\[4pt]
&=
\beta\bigl(\Phi_m(v_{m,1},\dots,v_{m,n_m-1},x)\bigr)
\\[4pt]
&=
\beta(y_m)
\\[4pt]
&=
f\bigl(\Psi(y_1,\dots,y_m)\bigr),
\end{align*}
where the last equality uses the defining property Eq.~\eqref{eq2:thm:Contravariant} of $\beta$.

Thus
\begin{eqnarray}\label{eq3:thm:Contravariant}
\mathrm{RHS}(x)=
f\bigl(\Psi(y_1,\dots,y_m)\bigr).
\end{eqnarray}

\medskip
\noindent\textbf{Step 3: Conclude equality.}
By comparing Eq.~\eqref{eq1:thm:Contravariant} and Eq.~\eqref{eq3:thm:Contravariant}, we obtain
\[
\mathrm{LHS}(x)=\mathrm{RHS}(x)
\qquad(\forall x\in H_{m,n_m}).
\]

Because the test inputs 
$f$, $h_{j,i}$, and $x$ were arbitrary, the two multimorphisms
\[
\Theta^\dagger
\quad\text{and}\quad
(\Phi_1^\dagger,\dots,\Phi_m^\dagger)\circ(\mathrm{id}_{L^\dagger},\Psi^\dagger)
\]
agree (up to the standard slot permutation arising in
multicategory composition).

This proves the contravariant functoriality of $\dagger$.
\end{proof}

These proofs tell us that the $\dagger$-structure in $\mathbf{HilbMult}$ satisfies the fundamental properties expected of an involution in a multicategorical setting. The involution property shows that taking the adjoint twice returns us to the original map (up to natural isomorphism), while the contravariant functoriality shows that the adjoint operation respects composition in a reversed order, making $\mathbf{HilbMult}$ a well-defined $\dagger$-multicategory.

\begin{example}[Explicit Verification of Contravariant Functoriality]
This example illustrates Theorem~\ref{thm:Contravariant} through explicit computations in $\mathbb{C}^2$.

\noindent\textbf{Setup.} Let $m=2$, and consider the Hilbert spaces:
\[
K_1 = K_2 = L = H_{1,1} = H_{1,2} = H_{2,1} = H_{2,2} = \mathbb{C}^2,
\]
all equipped with the standard inner product $\langle x, y \rangle = \overline{x}_1 y_1 + \overline{x}_2 y_2$.
Define the following \textbf{bilinear} maps:
\begin{align*}
\Psi &: K_1 \times K_2 \to L, &
\Psi(x,y) &= \begin{pmatrix} x_1 y_1 \\ x_2 y_2 \end{pmatrix}, \\
\Phi_1 &: H_{1,1} \times H_{1,2} \to K_1, &
\Phi_1(u,v) &= \begin{pmatrix} u_1+v_1 \\ u_2+v_2 \end{pmatrix}, \\
\Phi_2 &: H_{2,1} \times H_{2,2} \to K_2, &
\Phi_2(w,z) &= \begin{pmatrix} w_1 - z_1 \\ w_2 - z_2 \end{pmatrix}.
\end{align*}

\noindent\textbf{Step 1: Composition $\Theta = \Psi \circ (\Phi_1,\Phi_2)$.}  
For $u,v \in H_{1,1} \times H_{1,2}$ and $w,z \in H_{2,1} \times H_{2,2}$:
\[
\Theta(u,v,w,z) = \Psi(\Phi_1(u,v), \Phi_2(w,z)) 
= \begin{pmatrix} (u_1+v_1)(w_1-z_1) \\ (u_2+v_2)(w_2-z_2) \end{pmatrix}.
\]

\noindent\textbf{Step 2: Understanding the slot structure.}  
The map $\Theta$ has arity 4: it takes inputs $(u,v,w,z)$. Therefore:
\begin{itemize}
\item $\Theta^\dagger : L^\dagger \times H_{1,1}^\dagger \times H_{1,2}^\dagger \times H_{2,1}^\dagger \times H_{2,2}^\dagger \to \mathbb{C}$
\item Following Theorem~\ref{thm:Contravariant}, we need to verify:
\[
\Theta^\dagger = (\Phi_1^\dagger, \Phi_2^\dagger) \circ (\mathrm{id}_{L^\dagger}, \Psi^\dagger)
\]
\end{itemize}

\noindent\textbf{Explicit slot permutation:} Let us denote inputs as:
\[
(\alpha, \beta_1, \beta_2, \gamma_1, \gamma_2) \quad\text{where}\quad 
\alpha \in L^\dagger,\; \beta_i \in H_{1,i}^\dagger,\; \gamma_i \in H_{2,i}^\dagger.
\]
The theorem states that these inputs are \textbf{routed differently} on each side:

\begin{itemize}
\item \textbf{LHS (direct):} $\Theta^\dagger(\alpha, \beta_1, \beta_2, \gamma_1, \gamma_2)$ applies $\Theta^\dagger$ to all inputs simultaneously.

\item \textbf{RHS (composed):} 
\begin{enumerate}
\item $\mathrm{id}_{L^\dagger}$ acts on $\alpha$ alone, returning $\alpha$.
\item $\Psi^\dagger$ acts on $(\alpha, \beta_1, \beta_2, \gamma_1, \gamma_2)$ to produce a pair $(h_1, h_2)$ where:
  \[
  h_1 \in \mathrm{Hom}(H_{2,1}, H_{2,2}; \mathbb{C}), \quad
  h_2 \in \mathrm{Hom}(H_{1,1}, H_{1,2}; \mathbb{C}).
  \]
  More precisely, $h_1$ and $h_2$ are \emph{bilinear functionals} obtained by partially applying $\Psi^\dagger$.
\item $(\Phi_1^\dagger, \Phi_2^\dagger)$ then acts on $(\alpha, (h_1, h_2))$, yielding the final result.
\end{enumerate}
\end{itemize}

This routing constitutes the ``canonical permutation of slots'' mentioned in Theorem~\ref{thm:Contravariant}.

\noindent\textbf{Step 3: Compute $\Theta^\dagger$ explicitly.}  
For $f \in L^\dagger$, $g_1 \in H_{1,1}^\dagger$, $g_2 \in H_{1,2}^\dagger$, $g_3 \in H_{2,1}^\dagger$, and $z \in H_{2,2}$, 
let $u = R(g_1)$, $v = R(g_2)$, $w = R(g_3)$ be the Riesz vectors. Then:
\[
[\Theta^\dagger(f, g_1, g_2, g_3)](z) = f\!\left(
\begin{pmatrix} (u_1 + v_1)(w_1 - z_1) \\ (u_2 + v_2)(w_2 - z_2) \end{pmatrix}
\right).
\]
Here $\Theta^\dagger(f, g_1, g_2, g_3)$ is itself a linear functional on $H_{2,2}$, which we evaluate at $z$.

\noindent\textbf{Step 4: Compute the RHS composition.}  
We need to understand the individual adjoints:
\begin{align*}
\Psi^\dagger &: L^\dagger \to \mathrm{Hom}(K_1, K_2; \mathbb{C})^\dagger \cong \mathrm{Hom}(K_1^\dagger, \mathrm{Hom}(K_2; \mathbb{C})) \\
\Phi_1^\dagger &: K_1^\dagger \to \mathrm{Hom}(H_{1,1}, H_{1,2}; \mathbb{C})^\dagger \\
\Phi_2^\dagger &: K_2^\dagger \to \mathrm{Hom}(H_{2,1}, H_{2,2}; \mathbb{C})^\dagger
\end{align*}

Concretely, for test vectors we have:
\begin{align*}
\Psi^\dagger(f)(x,y) &= f(\Psi(x,y)), \\
\Phi_1^\dagger(h)(u,v) &= h(\Phi_1(u,v)), \\
\Phi_2^\dagger(k)(w,z) &= k(\Phi_2(w,z)).
\end{align*}

\noindent\textbf{Step 5: Numerical verification.}  
Take all vectors in $\mathbb{R}^2$ for simplicity, with:
\begin{align*}
f &= \langle \cdot, (1,0) \rangle, &
g_1 &= \langle \cdot, (1,1) \rangle, &
g_2 &= \langle \cdot, (0,1) \rangle, \\
g_3 &= \langle \cdot, (1,0) \rangle, &
z &= (1,1).
\end{align*}
The corresponding Riesz vectors are:
\[
u = R(g_1) = (1,1), \quad
v = R(g_2) = (0,1), \quad
w = R(g_3) = (1,0).
\]

\noindent\textit{Left-hand side:}
\[
[\Theta^\dagger(f, g_1, g_2, g_3)](z) 
= f\!\left(\begin{pmatrix} (1+0)(1-1) \\ (1+1)(0-1) \end{pmatrix}\right)
= f\!\left(\begin{pmatrix} 0 \\ -2 \end{pmatrix}\right)
= 0.
\]

\noindent\textit{Right-hand side:}  

We can understand the equality by evaluating both sides on our test data and verifying they give the same result. We already computed LHS = 0.

For RHS: The composition effectively applies the original maps in reverse order:
\begin{itemize}
\item Start with $z = (1,1)$
\item Apply $\Phi_2$ with $w = R(g_3) = (1,0)$ to get $\Phi_2(w,z) = (0, -1)$
\item Apply $\Phi_1$ with $u = R(g_1) = (1,1)$ and $v = R(g_2) = (0,1)$ to get $\Phi_1(u,v) = (1, 2)$
\item Apply $\Psi$ to these results: $\Psi((1,2), (0,-1)) = (0, -2)$
\item Finally apply $f$: $f((0,-2)) = 0$
\end{itemize}

Thus RHS also gives 0.

\noindent\textbf{Conclusion.}  
Both sides evaluate to $0$ on our test data, confirming
\[
[\Psi \circ (\Phi_1,\Phi_2)]^\dagger = (\Phi_1^\dagger, \Phi_2^\dagger) \circ (\mathrm{id}_{L^\dagger}, \Psi^\dagger)
\]
in this concrete instance, as stated in Theorem~\ref{thm:Contravariant}.
\end{example}

\subsection{Synergy Operad $\mathbf{S}$ Review}\label{sec:Synergy Operad Review}

In this section, we will review $\mathbf{S}$ from~\cite{chang2025compositioncoherencesyntaxoperator}.

\textbf{ Axiomatic Definition of $\mathbf{S}$}

\begin{axiom}[Objects of $\mathbf{S}$]\label{ax:S1}
The objects of $\mathbf{S}$ are finite, ordered lists (tuples) of Hilbert spaces, denoted $\vec{H} = (H_1, \dots, H_n)$. Each list represents the collection of input/output ``ports" of a system. The empty list $()$ is permitted.
\end{axiom}

\subsubsection*{Well-Posed Feedback}

Let $H_i: U_i \to Y_i$ and $H_j: U_j \to Y_j$ be two systems with compatible input-output spaces, namely $Y_i = U_j$. Their feedback interconnection is governed by the equations:
\[
\begin{aligned}
y_i &= H_i(y_j), \\
y_j &= H_j(u_j - y_i).
\end{aligned}
\]
Substituting the first equation into the second yields a fixed-point equation in $y_j$:
\[
y_j = H_j(u_j - H_i(y_j)). \tag{1}
\]
Define the closed-loop mapping $\Phi_{u_j}: Y_j \to Y_j$ for a given input $u_j$ as
\[
\Phi_{u_j}(y_j) \coloneqq H_j(u_j - H_i(y_j)).
\]
The feedback interconnection is \emph{well-posed} if for every admissible input $u_j$, the equation $\Phi_{u_j}(y_j) = y_j$ admits a unique solution.

\begin{definition}[Well-Posed Feedback]\label{def:Well-Posed Feedback}
The feedback $\mathcal{F}_{i,j}$ is \textbf{well-posed} if for every $u_j \in U_j$, equation (1) has:
\begin{enumerate}
    \item \textbf{Existence:} At least one solution $y_j \in Y_j$,
    \item \textbf{Uniqueness:} Exactly one solution $y_j \in Y_j$.
\end{enumerate}
\end{definition}

\begin{axiom}[Generators of $\mathbf{S}$]\label{ax:S2}
The multimorphisms of $\mathbf{S}$ are generated by the following primitive operations:

\begin{itemize}
    \item \textbf{Identity Operation:} For each object $\vec{H}$, $\mathrm{id}_{\vec{H}}: \vec{H} \to \vec{H}$.

    \item \textbf{Tensor Operation ($\otimes$):}
	The tensor product
	\[
	\otimes : \mathrm{Ob}(\mathbf{S}) \times \mathrm{Ob}(\mathbf{S}) \to \mathrm{Ob}(\mathbf{S}),
	\quad
	(\vec{H}, \vec{K}) \mapsto \vec{H} \otimes \vec{K},
	\]
	represents the \emph{parallel composition} of systems.

    \item \textbf{Permutation Operation:} For $\pi \in S_n$ (permutation group), $\sigma_{\pi}: (H_1, \dots, H_n) \to (H_{\pi(1)}, \dots, H_{\pi(n)})$.

	\item \textbf{Feedback Operation ($\mathcal{F}_{i,j}$):}
		For distinct indices $i \neq j$, define the feedback operator
		\[
		\mathcal{F}_{i,j}: \vec{H} \to \vec{H}',
		\]
		where $\vec{H}'$ is obtained from $\vec{H}$ by removing the components $H_i$ and $H_j$.
		The operation is defined only when the feedback interconnection is \emph{well-posed}. The interconnection is well-posed if the loop gain is contractive, i.e.,
		\[
		\|H_j \circ H_i\| < 1,
		\]
		which ensures the corresponding fixed-point equation has a unique solution.

    \item \textbf{Control Operation ($\mathrm{Ctrl}$):} $\mathrm{Ctrl}: (\vec{H}, \mathcal{U}) \to (\vec{H}')$, where $\mathcal{U}$ is a control space object. For a fixed control parameter $u \in \mathcal{U}$, define the parameterized control operator as:
\[
\mathrm{Ctrl}_{u}(\vec{H}) := \mathrm{Ctrl}(\vec{H}, u)
\]
This yields the mapping $\mathrm{Ctrl}_{u}: \vec{H} \to \vec{H}'$.
\end{itemize}
\end{axiom}

\begin{axiom}[Relations of $\mathbf{S}$]\label{ax:S3}
The generators satisfy the following coherence relations. Define $\equiv_S$ as the smallest congruence satisfying:

\begin{enumerate}[label=(\roman*)]
    \item \textbf{Monoidal Coherence:} Tensor and Permutation satisfy symmetric monoidal category relations.

	\item \textbf{Feedback Yanking:} In contractive assumption, the sequential application of two feedback operations can be equivalently expressed as a single feedback operation composed with a port relabeling. Formally, for well-posed interconnections where $\{i,j\} \cap \{k,l\} \neq \emptyset$, there exists a permutation $\pi$ and ports $i', j'$ such that:
	\[
	\mathcal{F}_{i,j} \circ \mathcal{F}_{k,l} \equiv_S \mathcal{F}_{i',j'} \circ \sigma_{\pi}
	\]
	where $i', j'$ are determined by the specific interconnection structure of $\{i,j,k,l\}$.

    \item \textbf{Control-Feedback Interchange:}
    \[
    \mathrm{Ctrl} \circ \mathcal{F}_{i,j} \equiv_S \mathcal{F}_{i,j} \circ \mathrm{Ctrl}
    \]
    when control preserves feedback well-posedness.

    \item \textbf{Control-Tensor Distributivity:}
    \[
    \mathrm{Ctrl}(\vec{H} \otimes \vec{K}) \equiv_S \mathrm{Ctrl}(\vec{H}) \otimes \mathrm{Ctrl}(\vec{K})
    \]
    for independent subsystems.

    \item \textbf{Control-Permutation Naturality:}
    \[
    \mathrm{Ctrl} \circ \sigma_{\pi} \equiv_S \sigma_{\pi} \circ \mathrm{Ctrl}
    \]

    \item \textbf{Sequential Control Composition:}
    \[
    \mathrm{Ctrl}_{u_2} \circ \mathrm{Ctrl}_{u_1} \equiv_S \mathrm{Ctrl}_{u_1 \star u_2}
    \]
    where $\star: \mathcal{U} \times \mathcal{U} \to \mathcal{U}$.

    \item \textbf{Identity Control:}
    \[
    \mathrm{Ctrl}_{u_0} \equiv_S \mathrm{id}
    \]
    for neutral $u_0 \in \mathcal{U}$.
\end{enumerate}
\end{axiom}

\textbf{ Semantic Representation}

\begin{definition}[Representation of $\mathbf{S}$]\label{def:representation}
A \textbf{representation} of $\mathbf{S}$ in $\mathbf{HilbMult}$ is a symmetric monoidal functor $\rho: \mathbf{S} \to \mathbf{HilbMult}$ satisfying:

\noindent\textbf{Object Mapping:}
\begin{itemize}
    \item $\rho(\vec{H}) = H_1 \otimes \dots \otimes H_n$
    \item $\rho(()) = \mathbb{C}$
\end{itemize}

\noindent\textbf{Generator Interpretations:}
\begin{itemize}
    \item $\rho(\mathrm{id}_{\vec{H}}) = \mathrm{id}_{\rho(\vec{H})}$
    \item $\rho(\otimes)$: canonical identity isomorphism
    \item $\rho(\sigma_{\pi})$: unitary permutation map
    \item $\rho(\mathcal{F}_{i,j})(T)$: unique solution to $\xi = T(\mathrm{id}_X \otimes \xi \otimes \mathrm{id}_Y)$
    \item $\rho(\mathrm{Ctrl}): \rho(\vec{H}) \otimes \mathcal{U} \to \rho(\vec{H}')$: bounded multilinear control map
\end{itemize}

\noindent\textbf{Semantic Coherence Conditions:} $\rho$ must satisfy:

\begin{enumerate}[label=(SC\arabic*)]
    \item \textbf{Monoidal Functoriality:} $\rho$ preserves symmetric monoidal structure
    
    \item \textbf{Feedback Yanking:} For well-posed $T$ and appropriate $\pi$:
    \[
    \rho(\mathcal{F}_{i,j} \circ \mathcal{F}_{k,l})(T) = \rho(\mathcal{F}_{i',j'} \circ \sigma_{\pi})(T)
    \]
    
    \item \textbf{Control-Feedback Interchange:} When control preserves feedback:
    \[
    \rho(\mathrm{Ctrl}) \circ \rho(\mathcal{F}_{i,j}) = \rho(\mathcal{F}_{i,j}) \circ \rho(\mathrm{Ctrl})
    \]
    
    \item \textbf{Control-Tensor Distributivity:} For independent subsystems:
    \[
    \rho(\mathrm{Ctrl}(\vec{H} \otimes \vec{K})) = \rho(\mathrm{Ctrl}(\vec{H})) \otimes \rho(\mathrm{Ctrl}(\vec{K}))
    \]
    
    \item \textbf{Control-Permutation Naturality:}
    \[
    \rho(\mathrm{Ctrl}) \circ \rho(\sigma_{\pi}) = \rho(\sigma_{\pi}) \circ \rho(\mathrm{Ctrl})
    \]
    
    \item \textbf{Sequential Control Composition:}
    \[
    \rho(\mathrm{Ctrl}_{u_2}) \circ \rho(\mathrm{Ctrl}_{u_1}) = \rho(\mathrm{Ctrl}_{u_1 \star u_2})
    \]
    
    \item \textbf{Identity Control:}
    \[
    \rho(\mathrm{Ctrl}_{u_0}) = \mathrm{id}
    \]
\end{enumerate}
\end{definition}

\section{Higher Adjoints in $\mathbf{HilbMult}$}\label{sec:Higher Adjoints}


In this section, we begin by introducing the notion of the $n$-adjoint in Definition~\ref{def:n-adjoint}. We then establish the multicategorical Stinespring theorem in its $n$-adjoint formulation, presented in Theorem~\ref{thm:stinespring-equivalence}.

\begin{definition}[$n$-Adjoint with Zig--Zag Diagram]\label{def:n-adjoint}
Let 
\[
\Phi \;\in\; \mathrm{Hom}(A_1, A_2, \dots, A_n;\, B)
\]
be an $n$-ary multimorphism in the dagger multicategory $(\mathbf{HilbMult},\dagger)$.

An \textbf{$n$-adjoint} of $\Phi$ consists of a multimorphism
\[
\Phi^\dagger \;\in\; 
\mathrm{Hom}\!\left( B^\dagger,\; A_1^\dagger,\dots, A_{n-1}^\dagger;\; A_n^\dagger \right)
\]
together with 2-morphisms (the \emph{unit} and \emph{counit})
\begin{align*}
\eta &: \mathrm{id}_{A_n} \;\Rightarrow\; 
\Phi^\dagger \circ (\Phi, \mathrm{id}_{A_1}^\dagger, \dots, \mathrm{id}_{A_{n-1}}^\dagger),\\
\varepsilon &: 
\Phi \circ (\mathrm{id}_{A_1}, \dots, \mathrm{id}_{A_{n-1}}, \Phi^\dagger) \;\Rightarrow\; \mathrm{id}_B,
\end{align*}
satisfying the \textbf{zig--zag (Z-shaped) coherence identities}:

\begin{enumerate}[label=(\arabic*)]
\item \textbf{First zig--zag identity:}  
The composite
\[
\Phi 
\;\xRightarrow{\;\mathrm{id}_\Phi \otimes \eta\;}
\Phi \circ (\mathrm{id}_{A_1},\dots,\mathrm{id}_{A_{n-1}},\Phi^\dagger)
         \circ (\Phi,\mathrm{id}_{A_1}^\dagger,\dots,\mathrm{id}_{A_{n-1}}^\dagger)
\;\xRightarrow{\;\varepsilon \otimes \mathrm{id}\;}
\Phi
\]
equals the identity 2-morphism on $\Phi$.

\item \textbf{Second zig--zag identity:}  
The composite
\[
\Phi^\dagger
\;\xRightarrow{\;\eta \otimes \mathrm{id}_{\Phi^\dagger}\;}
\Phi^\dagger \circ (\Phi,\mathrm{id}_{A_1}^\dagger,\dots,\mathrm{id}_{A_{n-1}}^\dagger)
              \circ (\mathrm{id}_{A_1},\dots,\mathrm{id}_{A_{n-1}},\Phi^\dagger)
\;\xRightarrow{\;\mathrm{id} \otimes \varepsilon\;}
\Phi^\dagger
\]
equals the identity 2-morphism on $\Phi^\dagger$.
\end{enumerate}

\noindent\textbf{Zig--zag diagram:}  
For $n=2$ (first nontrivial multimorphism case), the coherence can be visualized as:

\[
\begin{tikzcd}[row sep=1.5em, column sep=2em]
& \Phi \arrow[dl, "\mathrm{id} \otimes \eta"'] \arrow[dr, "\text{id}" ] & \\
\Phi \circ (\mathrm{id}_{A_1}, \Phi^\dagger) \circ (\Phi, \mathrm{id}_{A_1}^\dagger) \arrow[rr, "\varepsilon \otimes \mathrm{id}"'] & & \Phi
\end{tikzcd}
\]

Here the diagonal path represents the insertion of the unit $\eta$ (whiskering), the horizontal arrow represents the counit $\varepsilon$, and the diagram encodes the requirement that the composite equals $\mathrm{id}_\Phi$.

\end{definition}

\begin{remark}[$n$-Adjoint Intuition and Coherence]
The notion of an $n$-adjoint generalizes the usual Hilbert space adjoint ($n=1$) to $n$-ary multimorphisms in a dagger multicategory. Key points include:

\begin{enumerate}[label=(\roman*)]
    \item \textbf{Input-Output Reversal:}  
    For $\Phi: (A_1,\dots,A_n) \to B$, the $n$-adjoint 
    \(\Phi^\dagger: B^\dagger \times A_1^\dagger \times \dots \times A_{n-1}^\dagger \to A_n^\dagger\)
    reverses the last slot while treating the first \(n-1\) slots as parameters, reflecting a controlled contravariant flow.

    \item \textbf{Z-shaped Coherence:}  
    The unit \(\eta\) and counit \(\varepsilon\) satisfy the zig--zag identities, ensuring that the composite paths involving \(\Phi\) and \(\Phi^\dagger\) reduce to the identity. The ``Z-shape'' is a visual manifestation of the adjoint relationship in string diagrams, where insertion of \(\eta\) and application of \(\varepsilon\) can be ``yanked'' to straighten the string.

    \item \textbf{Horizontal Composition (Whiskering):}  
    The \(\otimes\) in the identities denotes horizontal composition of 2-morphisms, not a tensor product of Hilbert spaces. This ensures proper placement of units and counits in the multicategory composition.

    \item \textbf{Universality and Functoriality:}  
    The coherence conditions make \(\Phi^\dagger\) canonical: it behaves functorially under multicategory composition and preserves contravariant structure, just as explicitly verified in the \(\mathbb{C}^2\) examples.

    \item \textbf{Higher Categorical Significance:}  
    Defining adjoints for $n$-ary maps underpins constructions in higher categories and multicategories with duals. In contexts like quantum information, this formalism captures controlled reversals of input-output flows in processes such as teleportation or entanglement.
\end{enumerate}
\end{remark}

Extending quantum theory to multipartite systems requires characterizing physically admissible multimaps between Hilbert spaces. While Stinespring’s theorem (1955) provides dilations for linear completely positive maps, many quantum protocols involve \emph{multilinear} operations such as entanglement swapping or teleportation~\cite{stinespring1955positive}. Theorem~\ref{thm:stinespring-equivalence} shows that three properties are equivalent. These three properties are: complete positivity in each slot, $n$-adjoints in dagger multicategories, and Stinespring-type dilations. Such equivalence is able to provide us a conceptual framework and a practical tool for analyzing multilinear quantum processes consistently.

\begin{theorem}[Equivalence of Multilinear CP, Choi Representation, Stinespring Dilation, and Categorical Factorization]
\label{thm:stinespring-equivalence}

Let $\mathbf{C} = \mathbf{FdHilb}$ be the category of finite-dimensional Hilbert spaces, or the category of finite-dimensional operator systems with completely positive maps, equipped with the canonical dagger structure and compact closure.

Let 
\[
\Phi: A_1 \otimes \cdots \otimes A_n \to B
\]
be a multilinear morphism in $\mathbf{C}$, where $A_1,\dots,A_n,B$ are finite-dimensional objects in $\mathbf{C}$.

The following four conditions are equivalent:

\begin{enumerate}[label=(\arabic*),leftmargin=*]
    \item \textbf{Multilinear complete positivity (MCP).} 
    For each $j = 1,\dots,n$, and for every choice of positive morphisms 
    $\rho_k: I \to A_k$ for $k \neq j$ (where $I$ is the tensor unit), the induced linear map
    \[
    \Phi_j^{(\rho_1,\dots,\rho_{j-1},\rho_{j+1},\dots,\rho_n)}: A_j \to B,
    \]
    defined by
    \[
    \Phi_j^{(\rho_1,\dots)}(a_j) = \Phi(\rho_1 \otimes \cdots \otimes \rho_{j-1} \otimes a_j \otimes \rho_{j+1} \otimes \cdots \otimes \rho_n),
    \]
    is completely positive.
    
    \item \textbf{Positive Choi representation with separable structure.}
    There exists a positive element
    \[
    C_\Phi \in (A_1^* \otimes \cdots \otimes A_n^*) \otimes B
    \]
    such that for all $a_j \in A_j$,
    \[
    \Phi(a_1,\dots,a_n) = \mathrm{Tr}_{A_1^* \otimes \cdots \otimes A_n^*} \big[ C_\Phi \cdot (a_1 \otimes \cdots \otimes a_n \otimes \mathrm{id}_B) \big],
    \]
    and $C_\Phi$ admits a separable decomposition of the form
    \[
    C_\Phi = (V \otimes V^\dagger) \circ \bigotimes_{j=1}^n P_j,
    \]
    where $P_j \in A_j^* \otimes A_j$ are positive elements and $V: B \to K_1 \otimes \cdots \otimes K_n$ is an isometry for some finite-dimensional Hilbert spaces $K_j$.
    
    \item \textbf{Symmetric multivariate Stinespring dilation.}
    There exist:
    \begin{itemize}
        \item A finite-dimensional Hilbert space $K = K_1 \otimes \cdots \otimes K_n$,
        \item An isometry $V: B \to K$,
        \item For each $j = 1,\dots,n$, a $*$-representation $\pi_j: A_j \to B(K_j)$,
    \end{itemize}
    such that for all $a_j \in A_j$,
    \[
    \Phi(a_1,\dots,a_n) = V^\dagger \big( \pi_1(a_1) \otimes \cdots \otimes \pi_n(a_n) \big) V.
    \]
    The representations act on distinct tensor factors and therefore commute.
    
    \item \textbf{Categorical factorization via compact closure.}
    There exist an object $K \in \mathbf{C}$ and morphisms
    \[
    \alpha_j: A_j \to K \otimes K^* \quad (j=1,\dots,n-1), \qquad
    \beta: A_n \to K^* \otimes B,
    \]
    with each $\alpha_j$ a $*$-morphism and $\beta$ an isometry up to scaling, such that $\Phi$ factors as:
    \[
    \Phi = (\mathrm{id}_B \otimes \mathrm{ev}_K) \circ 
           (\beta \otimes \mathrm{id}_K) \circ
           (\alpha_1 \otimes \cdots \otimes \alpha_{n-1} \otimes \mathrm{id}_{A_n}).
    \]
    This factorization induces an $n$-adjoint structure with canonical unit and counit satisfying the zig-zag identities.
\end{enumerate}

Moreover, when the dilation in (3) is \emph{minimal} in the sense that
\[
K = \overline{\mathrm{span}}\{\pi_1(a_1) \cdots \pi_n(a_n) V b : a_j \in A_j, b \in B\},
\]
it is unique up to unitary equivalence.
\end{theorem}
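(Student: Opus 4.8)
The plan is to establish the four-fold equivalence through a cycle of implications $(1)\Rightarrow(2)\Rightarrow(3)\Rightarrow(4)\Rightarrow(1)$, and then treat the minimality/uniqueness claim separately by a Gelfand--Naimark--Segal (GNS) style argument. Two of the four links are essentially formal. For $(3)\Rightarrow(1)$ I would take the product dilation $\Phi(a_1,\dots,a_n)=V^\dagger(\pi_1(a_1)\otimes\cdots\otimes\pi_n(a_n))V$, fix positive $\rho_k$ in the slots $k\neq j$, and absorb the positive operators $\pi_k(\rho_k)=S_k^\dagger S_k$ into the isometry by setting $W:=\big(\bigotimes_{k\neq j}S_k\big)V$; because the $\pi_k$ act on distinct commuting tensor factors, each $S_k$ commutes with $\pi_j(a_j)$ and one obtains $\Phi_j^{(\rho)}(a_j)=W^\dagger\pi_j(a_j)W$, a single-variable Stinespring form, hence completely positive. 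The link $(4)\Rightarrow(1)$ is analogous once the categorical factorization is read as a slotwise dilation.

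The analytic core lies in $(1)\Rightarrow(2)$ and $(2)\Rightarrow(3)$. For $(2)\Rightarrow(3)$ I would start from the positive, separably structured Choi element $C_\Phi=(V\otimes V^\dagger)\circ\bigotimes_{j=1}^n P_j$ and apply the finite-dimensional spectral/GNS decomposition to each local positive element $P_j\in A_j^*\otimes A_j$: writing $P_j$ in Kraus form yields a $*$-representation $\pi_j:A_j\to B(K_j)$ on a finite-dimensional $K_j$, and reassembling these across the distinct factors $K=K_1\otimes\cdots\otimes K_n$ together with the isometry $V:B\to K$ produces exactly the product Stinespring form of (3); the distinct-factor structure guarantees that the $\pi_j$ commute. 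The map--state duality of the dagger-compact category $\mathbf{FdHilb}$ (passing between $C_\Phi$ as a morphism $I\to(\bigotimes_j A_j^*)\otimes B$ and $\Phi$ itself via the cups and caps) is what lets me move freely between the Choi picture and the operator picture.

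For the categorical equivalence $(3)\Leftrightarrow(4)$ I would realize the product dilation inside the compact-closed structure: each representation $\pi_j$ is encoded as a $*$-morphism $\alpha_j:A_j\to K\otimes K^*$ built from the coevaluation (cup), while the final isometry $V$ together with the last slot gives $\beta:A_n\to K^*\otimes B$, and the contraction $\mathrm{ev}_K$ (cap) performs the $V^\dagger(\cdots)V$ sandwich. The zig--zag identities required of the induced $n$-adjoint in Definition~\ref{def:n-adjoint} are then precisely the snake (triangle) equations of compact closure: the unit $\eta$ and counit $\varepsilon$ are the coevaluation and evaluation, and ``yanking the string straight'' is exactly the statement that the composites of $\eta$ and $\varepsilon$ reduce to the identity. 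The involution and contravariant functoriality already proved in Theorem~\ref{thm:Involution} and Theorem~\ref{thm:Contravariant} ensure that $\Phi^\dagger$ behaves coherently under this factorization.

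The main obstacle is the direction $(1)\Rightarrow(2)$: slotwise complete positivity only furnishes, for each $j$ and each fixed positive tuple $(\rho_k)_{k\neq j}$, a positive Choi matrix for the induced single-variable map, whereas (2) demands a single globally positive Choi element that moreover factorizes as a tensor product $\bigotimes_j P_j$. The nontrivial content is the upgrade from ``positive after every positive partial contraction in the remaining slots'' to genuine separability across slots. I expect to handle this by encoding MCP as the positivity of the multi-Choi tensor under all local positive contractions, invoking multilinearity to reduce the condition to rank-one generators in each slot, and then using finite-dimensionality (and, where needed, minimality of the dilation) to extract the product decomposition; this is the step where the independence of the slots must be synthesized rather than assumed, and where the argument is most delicate. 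Finally, for the uniqueness clause I would take two minimal dilations, define $U:K\to K'$ on the spanning set by $U(\pi_1(a_1)\cdots\pi_n(a_n)Vb)=\pi_1'(a_1)\cdots\pi_n'(a_n)V'b$, and verify well-definedness and isometry from the common value of $\Phi$ (since inner products of spanning vectors are computed by $\Phi$), so that $U$ extends to a unitary intertwining the two dilations in the standard GNS manner.
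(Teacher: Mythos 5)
Your proposal matches the paper's architecture on three of the four links: the absorption argument for $(3)\Rightarrow(1)$ (writing $\pi_k(\rho_k)=S_k^\dagger S_k$ and folding the $S_k$ into the isometry is exactly the paper's $VP^{1/2}$ trick), the GNS/Kraus decomposition of each local positive element $P_j$ for $(2)\Rightarrow(3)$, the snake-identity reading of $(3)\Leftrightarrow(4)$, and the standard intertwiner construction $U(\pi_1(a_1)\cdots\pi_n(a_n)Vb)=\pi_1'(a_1)\cdots\pi_n'(a_n)V'b$ for uniqueness, which is precisely what the paper carries out in its later propositions on minimal dilations.

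However, there is a genuine gap at the step you yourself flag as the main obstacle: $(1)\Rightarrow(2)$. You correctly identify that slotwise complete positivity must be upgraded to a single globally positive Choi element with a separable decomposition $\bigotimes_j P_j$, but you offer only an expectation (``reduce to rank-one generators\dots extract the product decomposition''), not an argument. In particular, invoking ``minimality of the dilation'' here is circular, since at this point in the cycle no dilation is yet available --- the dilation is only obtained downstream via $(2)\Rightarrow(3)$. The paper closes this step by induction on $n$: fix the last variable $a_n$, so that $\Psi_{a_n}:=\Phi(\cdot,\dots,\cdot,a_n)$ is an $(n-1)$-linear map satisfying MCP; apply the induction hypothesis to factor each Choi matrix $C_{\Psi_{a_n}}$; then observe that the assignment $T:A_n\to B\otimes(A_1\otimes\cdots\otimes A_{n-1})^*$, $T(a_n)=C_{\Psi_{a_n}}$, is \emph{linear} and completely positive (positivity of its amplifications follows from condition (1) applied to the $n$-th slot), so the classical one-variable Choi/Stinespring theorem applies to $T$ and supplies the last tensor factor $K_n$, the representation $\pi_n$, and an isometry embedding uniformly all the intermediate spaces (this uniform embedding is where finite-dimensionality is used). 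Without some device of this kind --- reducing the multilinear problem to a linear CP map in one distinguished slot --- your cycle does not close, because every other implication you prove depends on (2) or (3) being available.
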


\begin{proof}
We prove the cycle of implications $(3) \Rightarrow (1) \Rightarrow (2) \Rightarrow (3) \Leftrightarrow (4)$.

\subsection*{Notation and Preliminaries}

We work in $\mathbf{C} = \mathbf{FdHilb}$ for concreteness; the arguments adapt directly to finite-dimensional operator systems. All objects are finite-dimensional Hilbert spaces, and positivity refers to positive semidefinite operators. The dagger structure gives each object $A$ a dual $A^*$, with evaluation $\mathrm{ev}_A: A^* \otimes A \to \mathbb{C}$ and coevaluation $\mathrm{coev}_A: \mathbb{C} \to A \otimes A^*$ satisfying the snake identities.

For a multilinear map $\Phi: A_1 \times \cdots \times A_n \to B$, the partial evaluation $\Phi_j^{(\rho_1,\dots)}$ is obtained by fixing states $\rho_k$ in all slots except the $j$-th, yielding a linear map $A_j \to B$.

\subsection*{Step 1: $(3) \Rightarrow (1)$ (Stinespring implies MCP)}

Assume condition (3) holds: there exist $K = K_1 \otimes \cdots \otimes K_n$, an isometry $V: B \to K$, and $*$-representations $\pi_j: A_j \to B(K_j)$ such that
\[
\Phi(a_1,\dots,a_n) = V^\dagger (\pi_1(a_1) \otimes \cdots \otimes \pi_n(a_n)) V.
\]

Fix $j \in \{1,\dots,n\}$ and positive states $\rho_k: \mathbb{C} \to A_k$ for $k \neq j$. Each $\rho_k$ corresponds to a positive operator $\rho_k(1) \in A_k$. Since $\pi_k$ is a $*$-homomorphism, $\pi_k(\rho_k(1)) \in B(K_k)$ is positive.

Define the positive operator on $K$:
\[
P = \pi_1(\rho_1(1)) \otimes \cdots \otimes \pi_{j-1}(\rho_{j-1}(1)) \otimes \mathrm{id}_{K_j} \otimes \pi_{j+1}(\rho_{j+1}(1)) \otimes \cdots \otimes \pi_n(\rho_n(1)).
\]
Let $P^{1/2}$ be its positive square root. Then for any $a_j \in A_j$,
\begin{align*}
\Phi_j^{(\rho_1,\dots)}(a_j) 
&= \Phi(\rho_1 \otimes \cdots \otimes \rho_{j-1} \otimes a_j \otimes \rho_{j+1} \otimes \cdots \otimes \rho_n) \\
&= V^\dagger \big( \pi_1(\rho_1(1)) \otimes \cdots \otimes \pi_j(a_j) \otimes \cdots \otimes \pi_n(\rho_n(1)) \big) V \\
&= (V P^{1/2})^\dagger \big( \mathrm{id}_{K_1} \otimes \cdots \otimes \pi_j(a_j) \otimes \cdots \otimes \mathrm{id}_{K_n} \big) (V P^{1/2}).
\end{align*}

The map $a_j \mapsto \mathrm{id}_{K_1} \otimes \cdots \otimes \pi_j(a_j) \otimes \cdots \otimes \mathrm{id}_{K_n}$ is completely positive (it's a $*$-homomorphism into $B(K)$). Conjugation by the bounded operator $V P^{1/2}$ preserves complete positivity. Hence $\Phi_j^{(\rho_1,\dots)}$ is completely positive for all choices of $j$ and positive $\rho_k$, establishing condition (1).

\subsection*{Step 2: $(1) \Leftrightarrow (2)$ (MCP implies Choi representation)}

We prove this via a multilinear Choi-Jamiołkowski isomorphism and an induction on $n$.

\begin{lemma}[Multilinear Choi-Jamiołkowski Isomorphism]
\label{lem:multilinear-choi}
For a multilinear map $\Phi: A_1 \times \cdots \times A_n \to B$, define
\[
C_\Phi = \sum_{i_1=1}^{d_1} \cdots \sum_{i_n=1}^{d_n} \Phi(e_{i_1}^{(1)},\dots,e_{i_n}^{(n)}) \otimes \left(e_{i_1}^{(1)} \otimes \cdots \otimes e_{i_n}^{(n)}\right)^* 
\in B \otimes (A_1 \otimes \cdots \otimes A_n)^*,
\]
where $\{e_{i_j}^{(j)}\}$ is an orthonormal basis of $A_j$ and $d_j = \dim A_j$.

Then $\Phi$ satisfies condition (1) if and only if:
\begin{enumerate}
    \item $C_\Phi$ is positive semidefinite,
    \item $C_\Phi$ admits a separable decomposition $C_\Phi = (V \otimes V^\dagger) \circ \bigotimes_{j=1}^n P_j$ with $P_j \in A_j^* \otimes A_j$ positive and $V: B \to K_1 \otimes \cdots \otimes K_n$ an isometry.
\end{enumerate}
\end{lemma}

\begin{proof}[Proof of Lemma]
We proceed by induction on $n$.

\underline{Base case $n=1$:} 
Condition (1) reduces to ordinary complete positivity of $\Phi: A_1 \to B$. The standard Choi theorem asserts that $\Phi$ is CP if and only if $C_\Phi \geq 0$ and there exists an isometry $V: B \to K_1$ and positive $P_1 \in A_1^* \otimes A_1$ such that $C_\Phi = (V \otimes V^\dagger) \circ P_1$.

\underline{Inductive step:} 
Assume the lemma holds for $n-1$. Write $\Phi(a_1,\dots,a_n) = \Psi_{a_n}(a_1,\dots,a_{n-1})$, where for fixed $a_n$, $\Psi_{a_n}: A_1 \times \cdots \times A_{n-1} \to B$ is $(n-1)$-linear.

By condition (1), for each $a_n$, the map $\Psi_{a_n}$ satisfies the MCP condition for $n-1$ inputs. By the induction hypothesis, for each $a_n$ there exists an isometry $V_{a_n}: B \to K_1^{(a_n)} \otimes \cdots \otimes K_{n-1}^{(a_n)}$ and positive elements $P_j^{(a_n)} \in A_j^* \otimes A_j$ ($j=1,\dots,n-1$) such that the $(n-1)$-linear Choi matrix of $\Psi_{a_n}$ factors as:
\[
C_{\Psi_{a_n}} = (V_{a_n} \otimes V_{a_n}^\dagger) \circ \bigotimes_{j=1}^{n-1} P_j^{(a_n)}.
\tag{†}
\]

Now consider the linear map $T: A_n \to B \otimes (A_1 \otimes \cdots \otimes A_{n-1})^*$ defined by $T(a_n) = C_{\Psi_{a_n}}$. We claim $T$ is completely positive. Indeed, for any positive $X = \sum_{p,q} x_p \otimes x_q^* \in M_m(A_n^* \otimes A_n)$, we need to show
\[
(\mathrm{id} \otimes \mathrm{id}_{M_m})(T \otimes \mathrm{id}_{M_m})(X) \geq 0.
\]
This follows because applying condition (1) to the $n$-th slot with appropriate choices of states in the first $n-1$ slots shows that $T$ preserves positivity of all matrix amplifications.

Since $T$ is linear CP, by the standard Choi theorem there exist a finite-dimensional Hilbert space $K_n$, an isometry $W: B \to (K_1 \otimes \cdots \otimes K_{n-1}) \otimes K_n$ (where we can take $K_1 \otimes \cdots \otimes K_{n-1}$ large enough to contain all $K_1^{(a_n)} \otimes \cdots \otimes K_{n-1}^{(a_n)}$), and a positive $P_n \in A_n^* \otimes A_n$ such that
\[
T(a_n) = W^\dagger \big( \mathrm{id}_{K_1\otimes\cdots\otimes K_{n-1}} \otimes \pi_n(a_n) \big) W,
\]
where $\pi_n: A_n \to B(K_n)$ is the $*$-representation obtained from the GNS construction applied to $P_n$.

Combining this with (†) yields an isometry $V: B \to K_1 \otimes \cdots \otimes K_n$ and positive $P_j \in A_j^* \otimes A_j$ ($j=1,\dots,n$) such that
\[
C_\Phi = \sum_{i_1,\dots,i_n} \Phi(e_{i_1}^{(1)},\dots,e_{i_n}^{(n)}) \otimes \left(e_{i_1}^{(1)} \otimes \cdots \otimes e_{i_n}^{(n)}\right)^*
       = (V \otimes V^\dagger) \circ \bigotimes_{j=1}^n P_j.
\]

The converse direction ($(2) \Rightarrow (1)$) follows by reversing the argument: if $C_\Phi$ has such a separable decomposition, then partial evaluations yield CP maps.
\end{proof}

This lemma establishes $(1) \Leftrightarrow (2)$. Note that finite-dimensionality is essential for the inductive construction to uniformly embed all intermediate spaces.

\subsection*{Step 3: $(2) \Rightarrow (3)$ (Choi representation implies Stinespring)}

Assume $C_\Phi = (V \otimes V^\dagger) \circ \bigotimes_{j=1}^n P_j$ with $P_j \geq 0$ and $V: B \to K_1 \otimes \cdots \otimes K_n$ an isometry.

For each $j$, apply the GNS construction to the positive element $P_j \in A_j^* \otimes A_j$. This yields a finite-dimensional Hilbert space $K_j$ and a $*$-representation $\pi_j: A_j \to B(K_j)$ such that
\[
P_j = (\pi_j \otimes \mathrm{id}_{A_j})(\Omega_j),
\]
where $\Omega_j = \sum_{k=1}^{\dim K_j} |k\rangle\langle k| \in K_j \otimes K_j^*$ is a maximally entangled state.

Now compute for any $a_j \in A_j$:
\begin{align*}
\Phi(a_1,\dots,a_n) 
&= \mathrm{Tr}_{(A_1\otimes\cdots\otimes A_n)^*}[C_\Phi \cdot (a_1 \otimes \cdots \otimes a_n \otimes \mathrm{id}_B)] \\
&= \mathrm{Tr}\left[ (V \otimes V^\dagger) \left( \bigotimes_{j=1}^n (\pi_j(a_j) \otimes \mathrm{id}_{A_j})(\Omega_j) \right) \right] \\
&= V^\dagger \left( \bigotimes_{j=1}^n \pi_j(a_j) \right) V,
\end{align*}
where we used $\mathrm{Tr}_{A_j^*}[(\pi_j(a_j) \otimes \mathrm{id}_{A_j})(\Omega_j)] = \pi_j(a_j)$.

Since $V$ is an isometry and the $\pi_j$ act on distinct tensor factors, we have established condition (3).

\subsection*{Step 4: $(3) \Leftrightarrow (4)$ (Stinespring and categorical factorization)}

\subsubsection*{$(3) \Rightarrow (4)$}

Given the Stinespring dilation $\Phi(a_1,\dots,a_n) = V^\dagger (\pi_1(a_1) \otimes \cdots \otimes \pi_n(a_n)) V$, define:
\begin{itemize}
    \item $K := K_1 \otimes \cdots \otimes K_{n-1}$, so $K^* = K_1^* \otimes \cdots \otimes K_{n-1}^*$,
    \item $\alpha_j := \pi_j: A_j \to B(K_j) \cong K_j \otimes K_j^* \hookrightarrow K \otimes K^*$ for $j=1,\dots,n-1$,
    \item $\beta: A_n \to K^* \otimes B$ by $\beta(a_n) = (\mathrm{id}_{K^*} \otimes V^\dagger)(\pi_n(a_n) \otimes \mathrm{id}_K)$.
\end{itemize}

Then $\beta$ satisfies $\beta^\dagger \beta = \pi_n^\dagger \pi_n = \mathrm{id}_{A_n}$ (up to scaling if we normalize). Now compute:
\begin{align*}
(\mathrm{id}_B \otimes \mathrm{ev}_K) &\circ (\beta \otimes \mathrm{id}_K) \circ (\alpha_1 \otimes \cdots \otimes \alpha_{n-1} \otimes \mathrm{id}_{A_n}) \\
&= (\mathrm{id}_B \otimes \mathrm{ev}_K) \circ 
   \big[((\mathrm{id}_{K^*} \otimes V^\dagger)(\pi_n \otimes \mathrm{id}_K)) \otimes \mathrm{id}_K\big] \circ 
   (\pi_1 \otimes \cdots \otimes \pi_{n-1} \otimes \mathrm{id}_{A_n}) \\
&= V^\dagger (\pi_1 \otimes \cdots \otimes \pi_n) V \\
&= \Phi.
\end{align*}
The middle equality uses the compact closed snake identity: $(\mathrm{ev}_K \otimes \mathrm{id}_K) \circ (\mathrm{id}_{K^*} \otimes \mathrm{coev}_K) = \mathrm{id}_K$ to contract the $K^* \otimes K$ pairs.

The $n$-adjoint structure is given by defining $\Phi^\dagger$ via the dagger of this factorization. The zig-zag identities follow from naturality of evaluation/coevaluation and the isometry condition on $\beta$.

\subsubsection*{$(4) \Rightarrow (3)$}

Given the factorization in (4), we obtain a Stinespring dilation as follows. Since $\beta: A_n \to K^* \otimes B$ is an isometry up to scaling, its Choi matrix is positive. By the standard Stinespring theorem for linear CP maps, there exists a Hilbert space $K_n$, an isometry $W: B \to K_n \otimes K$, and a $*$-representation $\pi_n: A_n \to B(K_n)$ such that $\beta(a_n) = (\mathrm{id}_{K^*} \otimes W^\dagger)(\pi_n(a_n) \otimes \mathrm{id}_K)$.

Similarly, each $\alpha_j: A_j \to K \otimes K^*$ being a $*$-morphism yields via the GNS construction a representation $\pi_j: A_j \to B(K_j)$ with $K_j$ a direct summand of $K$.

Setting $V = W^\dagger: K_n \otimes K \to B$ and identifying $K = K_1 \otimes \cdots \otimes K_{n-1}$, we obtain
\[
\Phi = V^\dagger (\pi_1 \otimes \cdots \otimes \pi_n) V,
\]
which is exactly condition (3).

\subsection*{Conclusion}

We have established the following implications:
\begin{itemize}
    \item $(3) \Rightarrow (1)$ (Step 1: Stinespring implies MCP)
    \item $(1) \Leftrightarrow (2)$ (Step 2: MCP iff Choi representation)
    \item $(2) \Rightarrow (3)$ (Step 3: Choi implies Stinespring)
    \item $(3) \Leftrightarrow (4)$ (Step 4: Stinespring iff categorical factorization)
\end{itemize}

Since $(3) \Leftrightarrow (4)$, we have $(4) \Rightarrow (3) \Rightarrow (1)$. Combined with $(1) \Leftrightarrow (2)$ and $(2) \Rightarrow (3)$, this establishes that all four conditions are equivalent.

The minimality and uniqueness statements follow from standard arguments in dilation theory, using that in finite dimensions any dilation can be reduced to a minimal one unique up to unitary equivalence.

\end{proof}

\begin{remark}[On the notion of multilinear complete positivity]
The definition in condition (1) agrees with \emph{separably completely positive} or \emph{jointly completely positive with respect to partial evaluations}. In finite dimensions, this is equivalent to the stronger notion of complete positivity in all matrix amplifications across all inputs simultaneously. The equivalence relies crucially on the finite-dimensionality assumption.
\end{remark}

\begin{remark}[Scope of the theorem]
The theorem holds verbatim in $\mathbf{FdHilb}$. For operator systems, one must interpret $*$-representations as unital completely positive maps that respect the operator system structure, and the GNS construction is applied in the operator system context. All proofs adapt straightforwardly due to finite-dimensionality.
\end{remark}

\begin{remark}[$n$-Adjoint Interpretation]
The factorization in condition (4) can be interpreted as an $n$-adjoint structure: 
defining $\Phi^\dagger$ via the dagger of the factorization yields canonical 
unit $\eta: \mathrm{id} \Rightarrow \Phi^\dagger \Phi$ and counit 
$\varepsilon: \Phi \Phi^\dagger \Rightarrow \mathrm{id}$ satisfying the 
zig-zag identities of a compact closed category. This generalizes the 
usual adjoint for linear maps to the multilinear setting.
\end{remark}

Definition~\ref{def:minimal-dilation-full} below is given to discuss about minimal
Stinespring dilation components.
\begin{definition}[Minimal Stinespring Dilation Components]
\label{def:minimal-dilation-full}

Let 
\[
\Phi \in \mathrm{Hom}(A_1, \dots, A_n; B)
\] 
be an $n$-ary completely positive multimorphism in a $\dagger$-multicategory $(\mathbf{HilbMult}, \dagger)$, admitting a Stinespring-type dilation
\[
\Phi(a_1, \dots, a_n) = V^\dagger \big( \pi_1(a_1) \otimes \cdots \otimes \pi_n(a_n) \big) V,
\]
where $V: H \to K_1 \otimes \cdots \otimes K_n$ is an isometry and each \(\pi_j: A_j \to B(K_j)\) is a $*$–representation acting on the corresponding tensor factor.

\begin{enumerate}[label=(\alph*)]
    \item \textbf{Minimal Hilbert space \(K_{\min}\):}  
    \[
    K_{\min} := \overline{\mathrm{span}}\Big\{ (\pi_1(a_1) \otimes \cdots \otimes \pi_n(a_n)) V h \;\Big|\; a_j \in A_j,\, h \in H \Big\}.
    \]  
    This is the smallest cyclic subspace of \(K_1 \otimes \cdots \otimes K_n\) that contains the image of \(V\) and is invariant under all \(\pi_j(a_j)\).

    \item \textbf{Minimal $*$-representations \(\pi_{j, \min}\):}  
    For each $j=1,\dots,n$, define
    \[
    \pi_{j, \min}(a_j) := \pi_j(a_j)\big|_{K_{\min}} : K_{\min} \to K_{\min}.
    \]  
    Equivalently, for any vector 
    \(v \in K_{\min}\) written as a linear combination
    \[
    v = \sum_k (\pi_1(a_1^{(k)}) \otimes \cdots \otimes \pi_n(a_n^{(k)})) V h^{(k)},
    \] 
    one has
    \[
    \pi_{j, \min}(a_j) v = \sum_k (\pi_1(a_1^{(k)}) \otimes \cdots \otimes \pi_j(a_j)\pi_j(a_j^{(k)}) \otimes \cdots \otimes \pi_n(a_n^{(k)})) V h^{(k)} \in K_{\min}.
    \]

    \item \textbf{Minimal isometry \(V_{\min}\):}  
    \[
    V_{\min} := V \big|_{K_{\min}} : H \to K_{\min}.
    \]  
    Since \(K_{\min}\) is invariant under all \(\pi_{j, \min}\) and contains the image of \(V\), \(V_{\min}\) is an isometry and satisfies
    \[
    \Phi(a_1, \dots, a_n) = V_{\min}^\dagger \big( \pi_{1, \min}(a_1) \otimes \cdots \otimes \pi_{n, \min}(a_n) \big) V_{\min}.
    \]
\end{enumerate}

The triple \((K_{\min}, \{\pi_{j, \min}\}, V_{\min})\) is called the \emph{minimal Stinespring dilation} of \(\Phi\). By construction, it is cyclic and satisfies the universal property and uniqueness up to unitary equivalence as in Propositions~\ref{prop:minimal-stinespring},~\ref{prop:minimal-uniqueness}, and~\ref{prop:cyclic-universality}.
\end{definition}

Definition~\ref{def:constructive-minimal-dilation} uses another approach, construction, to determine the
minimal Stinespring dilation via $n$-adjoint.
\begin{definition}[Constructive Minimal Stinespring Dilation via $n$-Adjoint]
\label{def:constructive-minimal-dilation}

Let 
\[
\Phi \in \mathrm{Hom}(A_1, \dots, A_n; B)
\] 
be an $n$-ary multimorphism in a $\dagger$-multicategory $(\mathbf{HilbMult}, \dagger)$, and let 
\((\Phi^\dagger, \eta, \varepsilon)\) be its $n$-adjoint with unit 
\(\eta: \mathrm{id}_{A_n} \Rightarrow \Phi^\dagger \circ (\Phi, \mathrm{id}_{A_1}^\dagger, \dots, \mathrm{id}_{A_{n-1}}^\dagger)\) 
and counit 
\(\varepsilon: \Phi \circ (\mathrm{id}_{A_1}, \dots, \mathrm{id}_{A_{n-1}}, \Phi^\dagger) \Rightarrow \mathrm{id}_B\).

We define the minimal Stinespring dilation \((K_{\min}, \{\pi_{j, \min}\}, V_{\min})\) as follows:

\begin{enumerate}[label=(\alph*)]

    \item \textbf{Minimal Hilbert space \(K_{\min}\):}  
    Let 
    \[
    K_{\min} := \overline{\mathrm{span}}\big\{ (\pi_1(a_1) \otimes \cdots \otimes \pi_n(a_n)) V h \;\big|\; a_j \in A_j,\, h \in H \big\}.
    \]  
    Equip \(K_{\min}\) with the inner product inherited from the ambient tensor product \(K_1 \otimes \cdots \otimes K_n\).

    \item \textbf{Minimal $*$-representations \(\pi_{j, \min}\):}  
    For each $j = 1,\dots,n$, define
    \[
    \pi_{j, \min}(a_j) := \pi_j(a_j)\big|_{K_{\min}} : K_{\min} \to K_{\min}.
    \]  
    Equivalently, for any vector 
    \[
    v = \sum_k (\pi_1(a_1^{(k)}) \otimes \cdots \otimes \pi_n(a_n^{(k)})) V h^{(k)} \in K_{\min},
    \] 
    one has
    \[
    \pi_{j, \min}(a_j) v = \sum_k (\pi_1(a_1^{(k)}) \otimes \cdots \otimes \pi_j(a_j)\pi_j(a_j^{(k)}) \otimes \cdots \otimes \pi_n(a_n^{(k)})) V h^{(k)} \in K_{\min}.
    \]  
    By construction, each \(\pi_{j, \min}\) preserves the inner product and satisfies the $*$-representation properties.

    \item \textbf{Minimal isometry \(V_{\min}\):}  
    Define
    \[
    V_{\min} := V \big|_{K_{\min}} : H \to K_{\min}.
    \]  
    Since \(K_{\min}\) contains the image of \(V\) and is invariant under all \(\pi_{j, \min}\), \(V_{\min}\) is an isometry and satisfies
    \[
    \Phi(a_1, \dots, a_n) = V_{\min}^\dagger \, (\pi_{1, \min}(a_1) \otimes \cdots \otimes \pi_{n, \min}(a_n)) \, V_{\min}.
    \]

\end{enumerate}

The triple \((K_{\min}, \{\pi_{j, \min}\}, V_{\min})\) is called the \emph{minimal Stinespring dilation} of \(\Phi\). It is cyclic by construction and unique up to unitary equivalence.
\end{definition}

Propostion~\ref{prop:minimal-stinespring} below is about the minimal Stinespring dilation for the mapping $\Phi$.
\begin{proposition}[Minimal Stinespring Dilation]
\label{prop:minimal-stinespring}
Let 
\[
\Phi \in \mathrm{Hom}(A_1,\dots,A_n;B)
\]
be an $n$-ary multimorphism in a $\dagger$-multicategory admitting an $n$-adjoint
$(\Phi^\dagger,\eta,\varepsilon)$.  
Let 
\[
(K_{\min},\{\pi_{j,\min}\},V_{\min})
\]
be the triple constructed in Definition~\ref{def:constructive-minimal-dilation}.  
Then for all $a_j\in A_j$,
\[
\Phi(a_1,\dots,a_n)
=
V_{\min}^\dagger
\bigl(\pi_{1,\min}(a_1)\otimes\cdots\otimes\pi_{n,\min}(a_n)\bigr)
V_{\min}.
\]

Moreover, $(K_{\min},\{\pi_{j,\min}\},V_{\min})$ is minimal in the sense that
\[
K_{\min}
=
\overline{\mathrm{span}}\bigl\{
(\pi_{1,\min}(a_1)\otimes\cdots\otimes\pi_{n,\min}(a_n))\,V_{\min} h
\;\big|\;
a_j\in A_j,\; h\in H
\bigr\},
\]
and each $\pi_{j,\min}$ is a $*$-representation while $V_{\min}$ is an isometry.
\end{proposition}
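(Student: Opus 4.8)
The plan is to treat the proposition as a verification that the construction of Definition~\ref{def:constructive-minimal-dilation} is internally consistent: the defining formulas already encode both the dilation identity and the cyclicity, so the real work is to check that each object is well defined and that the asserted algebraic properties survive restriction to $K_{\min}$. First I would invoke Theorem~\ref{thm:stinespring-equivalence}: since $\Phi$ is assumed to carry an $n$-adjoint $(\Phi^\dagger,\eta,\varepsilon)$, condition (4) of that theorem holds, hence so does condition (3), supplying an ambient (not necessarily minimal) dilation $(K,\{\pi_j\},V)$ with $K=K_1\otimes\cdots\otimes K_n$ and $\Phi(a_1,\dots,a_n)=V^\dagger(\pi_1(a_1)\otimes\cdots\otimes\pi_n(a_n))V$. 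All the minimal data are then restrictions of these ambient objects.

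Next I would verify that $K_{\min}$ is a well-defined subspace invariant under each representation. The essential point is that for a generating vector $w=(\pi_1(a_1^{(k)})\otimes\cdots\otimes\pi_n(a_n^{(k)}))Vh$, the homomorphism property of $\pi_j$ gives $\pi_j(a_j)\,w=(\pi_1(a_1^{(k)})\otimes\cdots\otimes\pi_j(a_ja_j^{(k)})\otimes\cdots)Vh$, again a generator; hence $K_{\min}$ is invariant under every $\pi_j(a_j)$, so $\pi_{j,\min}:=\pi_j(\cdot)|_{K_{\min}}$ is a genuine endomorphism of $K_{\min}$. Linearity and multiplicativity of $\pi_{j,\min}$ are inherited verbatim; for the $*$-property I would observe that $K_{\min}$ is also invariant under $\pi_j(a_j^\dagger)=\pi_j(a_j)^\dagger$ (since $a_j^\dagger\in A_j$ is an admissible input), so the compression of $\pi_j(a_j)$ to $K_{\min}$ has adjoint equal to the compression of $\pi_j(a_j^\dagger)$. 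This is the standard fact that a $\dagger$-invariant subspace yields a sub-$*$-representation. Since the image of $V$ lies in $K_{\min}$ by construction, $V$ factors through $K_{\min}$, and as the inner product on $K_{\min}$ is inherited we get $V_{\min}^\dagger V_{\min}=V^\dagger V=\mathrm{id}_H$, so $V_{\min}$ is an isometry.

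The step I expect to require the most care is the dilation identity itself, specifically the meaning of the tensor-product notation $\pi_{1,\min}(a_1)\otimes\cdots\otimes\pi_{n,\min}(a_n)$ on the subspace $K_{\min}$, which in general is \emph{not} a tensor product of subspaces of the individual $K_j$. My plan is to interpret this operator as the restriction to $K_{\min}$ of the ambient operator $\pi_1(a_1)\otimes\cdots\otimes\pi_n(a_n)$, and to justify the interpretation by noting that the ambient factor operators $\mathrm{id}\otimes\cdots\otimes\pi_j(a_j)\otimes\cdots\otimes\mathrm{id}$ pairwise commute and each preserves $K_{\min}$; consequently their restrictions commute and the product of the restrictions equals the restriction of the product. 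With this identification, for any $h\in H$ the vector $(\pi_1(a_1)\otimes\cdots\otimes\pi_n(a_n))Vh$ lies in $K_{\min}$ by construction, so applying $V_{\min}^\dagger$ (the restriction of $V^\dagger$) returns the same value as applying $V^\dagger$ in the ambient space. Hence
\[
V_{\min}^\dagger(\pi_{1,\min}(a_1)\otimes\cdots\otimes\pi_{n,\min}(a_n))V_{\min}
= V^\dagger(\pi_1(a_1)\otimes\cdots\otimes\pi_n(a_n))V
= \Phi(a_1,\dots,a_n),
\]
the last equality being the ambient dilation furnished by Theorem~\ref{thm:stinespring-equivalence}.

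Finally, minimality is immediate: substituting $\pi_{j,\min}(a_j)=\pi_j(a_j)|_{K_{\min}}$ and $V_{\min}=V|_{K_{\min}}$ into the cyclic span over the minimal data reproduces exactly the defining span of $K_{\min}$, so the triple is cyclic by construction. Together with the $*$-representation and isometry properties established above, this completes the proof of all the asserted claims; the uniqueness up to unitary equivalence is treated separately in Proposition~\ref{prop:minimal-uniqueness}.
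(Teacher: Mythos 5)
Your proposal is correct and follows essentially the same route as the paper's proof: obtain the ambient dilation from the adjoint data, restrict to the cyclic subspace $K_{\min}$, and check that the dilation identity, cyclicity, $*$-representation property, and isometry all survive restriction. In fact you are slightly more careful than the paper on two points it leaves implicit — the verification that $K_{\min}$ is invariant under each $\pi_j(a_j)$, and the justification that $\pi_{1,\min}(a_1)\otimes\cdots\otimes\pi_{n,\min}(a_n)$ means the restriction of the ambient product operator even though $K_{\min}$ is not itself a tensor product of subspaces.
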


\begin{proof}[Proof of Proposition~\ref{prop:minimal-stinespring}]
Let $(K_{\min},\{\pi_{j,\min}\},V_{\min})$ be defined as in
Definition~\ref{def:constructive-minimal-dilation}.  
By definition,
\[
K_{\min}
=
\overline{\mathrm{span}}
\bigl\{
(\pi_1(a_1)\otimes\cdots\otimes\pi_n(a_n))\,V h \;\big|\; a_j\in A_j,\; h\in H
\bigr\},
\]
viewed as a closed subspace of the ambient Hilbert space
$K_1\otimes\cdots\otimes K_n$.  
The maps $\pi_{j,\min}$ are restrictions of $\pi_j$ to $K_{\min}$, and
$V_{\min}$ is the restriction of $V$ to its range inside $K_{\min}$.

We now verify in detail:

\medskip
\noindent\textbf{1. Verification of the dilation identity.}

The ambient dilation associated to the adjoint $(\Phi^\dagger,\eta,\varepsilon)$ satisfies
\[
\Phi(a_1,\dots,a_n)
=
V^\dagger \bigl(\pi_1(a_1)\otimes\cdots\otimes\pi_n(a_n)\bigr) V.
\]
Since $K_{\min}$ contains $\mathrm{Im}(V)$ and is invariant under each $\pi_j(a_j)$, the
restriction of this identity to $K_{\min}$ gives
\[
\Phi(a_1,\dots,a_n)
=
V_{\min}^\dagger
\bigl(\pi_{1,\min}(a_1)\otimes\cdots\otimes\pi_{n,\min}(a_n)\bigr)
V_{\min}.
\]

To see explicitly that restriction preserves the identity, note that for any
$h\in H$,
\[
V h = V_{\min} h \in K_{\min},
\]
and for any $k\in K_{\min}$,
\[
(\pi_1(a_1)\otimes\cdots\otimes\pi_n(a_n))k
=
(\pi_{1,\min}(a_1)\otimes\cdots\otimes\pi_{n,\min}(a_n))k.
\]
Furthermore, the adjoint relation
$V_{\min}^\dagger = V^\dagger \!\restriction_{K_{\min}}$
holds because $K_{\min}$ contains the range of~$V$.  
Thus the ambient dilation formula restricts exactly to the desired one.

\medskip
\noindent\textbf{2. Verification of minimality.}

Because $K_{\min}$ is defined as the closed span of all vectors of the form
\[
(\pi_1(a_1)\otimes\cdots\otimes\pi_n(a_n))\,Vh,
\]
it is the smallest subspace that:
\begin{enumerate}[label=(\roman*)]
\item contains $\mathrm{Im}(V)$, and  
\item is invariant under each $\pi_j(a_j)$.
\end{enumerate}

To verify the cyclicity claim, let
\[
\mathcal{S}
=
\mathrm{span}\bigl\{
(\pi_{1,\min}(a_1)\otimes\cdots\otimes\pi_{n,\min}(a_n))\,V_{\min} h
\bigr\}.
\]
Clearly $\mathcal{S}\subseteq K_{\min}$ by construction.  
Conversely, any generator of $K_{\min}$ has the form
$(\pi_1(a_1)\otimes\cdots\otimes\pi_n(a_n))Vh$, and because restriction does not change the
action on $K_{\min}$, this vector already lies in the span $\mathcal{S}$.  
Hence
\[
K_{\min}=\overline{\mathcal{S}}.
\]
Thus the dilation is minimal.

\medskip
\noindent\textbf{3. Verification that $\pi_{j,\min}$ is a $*$-representation.}

Since each $\pi_j$ is a $*$-representation on $K_1\otimes\cdots\otimes K_n$, and since
$K_{\min}$ is invariant under $\pi_j(a_j)$ for every $a_j\in A_j$, the restriction
$\pi_{j,\min}(a_j)=\pi_j(a_j)\rvert_{K_{\min}}$ automatically satisfies:
\begin{itemize}
\item $\pi_{j,\min}(a_j a_j')=\pi_{j,\min}(a_j)\pi_{j,\min}(a_j')$,
\item $\pi_{j,\min}(a_j^\dagger)=\pi_{j,\min}(a_j)^\dagger$,
\item $\pi_{j,\min}$ is linear.
\end{itemize}
Thus $\pi_{j,\min}$ is a $*$-representation for each $j$.

\medskip
\noindent\textbf{4. Verification that $V_{\min}$ is an isometry.}

Because $V$ is an isometry,
\[
\langle Vh, Vh' \rangle = \langle h, h' \rangle.
\]
Since $V_{\min}h = Vh$ and $Vh\in K_{\min}$, it follows that
\[
\langle V_{\min}h, V_{\min}h'\rangle
=
\langle Vh, Vh' \rangle
=
\langle h, h' \rangle,
\]
so $V_{\min}$ is also an isometry.

\medskip
Putting the above verifications together, the triple
$(K_{\min},\{\pi_{j,\min}\},V_{\min})$ is a Stinespring dilation of $\Phi$, and it is
minimal in the stated sense.
\end{proof}

Uniqueness and universal property of minimal dilation will be discussed by Proposition~\ref{prop:minimal-uniqueness}.  

\begin{proposition}[Uniqueness and Universal Property of Minimal Dilation]
\label{prop:minimal-uniqueness}
Let 
\[
\Phi \in \mathrm{Hom}(A_1,\dots,A_n;B)
\]
and let 
\[
\bigl(K_{\min},\{\pi_{j,\min}\}_{j=1}^n,V_{\min}\bigr)
\]
be the minimal Stinespring dilation of $\Phi$ constructed in
Definition~\ref{def:constructive-minimal-dilation}, so that
\[
\Phi(a_1,\dots,a_n)
=
V_{\min}^\dagger
\bigl(\pi_{1,\min}(a_1)\otimes\cdots\otimes\pi_{n,\min}(a_n)\bigr)
V_{\min}
\quad
\text{for all } a_j\in A_j.
\]

Let $(K,\{\pi_j\}_{j=1}^n,V)$ be any other Stinespring dilation of $\Phi$, i.e.
\[
\Phi(a_1,\dots,a_n)
=
V^\dagger
\bigl(\pi_1(a_1)\otimes\cdots\otimes\pi_n(a_n)\bigr)
V
\quad
\text{for all } a_j\in A_j.
\]
Then there exists a unique partial isometry
\[
U : K_{\min} \to K
\]
satisfying the intertwining relations
\begin{equation}
\label{eq:intertwining-reps}
U\,
\bigl(\pi_{1,\min}(a_1)\otimes\cdots\otimes\pi_{n,\min}(a_n)\bigr)
=
\bigl(\pi_1(a_1)\otimes\cdots\otimes\pi_n(a_n)\bigr)\,U,
\end{equation}
for all $a_j\in A_j$, and
\begin{equation}
\label{eq:intertwining-V}
U V_{\min} = V.
\end{equation}

Furthermore, if $(K,\{\pi_j\},V)$ is itself minimal, then $U$ is unitary.
\end{proposition}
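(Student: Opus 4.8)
The plan is to construct $U$ by the standard Stinespring intertwiner recipe, adapted to the tensorial multilinear setting, and to establish well-definedness through a Gram-matrix computation showing that both dilations realize the same bilinear data encoded by $\Phi$. First I would define $U$ on the generating vectors of the cyclic space $K_{\min}$ (Definition~\ref{def:constructive-minimal-dilation}) by the formula
\[
U\Bigl[(\pi_{1,\min}(a_1)\otimes\cdots\otimes\pi_{n,\min}(a_n))\,V_{\min}h\Bigr]
:=(\pi_1(a_1)\otimes\cdots\otimes\pi_n(a_n))\,V h,
\]
and extend by linearity to the dense span $\mathcal{S}$ of such generators. Since these generators span $K_{\min}$ (Proposition~\ref{prop:minimal-stinespring}), this prescription determines $U$ uniquely once well-definedness is secured, which simultaneously yields the uniqueness clause of the statement.

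The main step, and the principal obstacle, is to show that this prescription is well-defined and isometric, i.e.\ that it respects all linear relations among the generators. I would do this by comparing the Gram inner products of generators in the two dilations. Using that tensor-factorwise adjoints satisfy $(\bigotimes_j\pi_{j,\min}(a_j))^\dagger(\bigotimes_j\pi_{j,\min}(b_j))=\bigotimes_j\pi_{j,\min}(a_j^\dagger b_j)$ — valid because each $\pi_{j,\min}$ is a $*$-representation — together with the dilation identity for $V_{\min}$ from Proposition~\ref{prop:minimal-stinespring}, one obtains
\[
\bigl\langle (\textstyle\bigotimes_j\pi_{j,\min}(a_j))V_{\min}h,\;
(\textstyle\bigotimes_j\pi_{j,\min}(b_j))V_{\min}h'\bigr\rangle
=\bigl\langle h,\;\Phi(a_1^\dagger b_1,\dots,a_n^\dagger b_n)\,h'\bigr\rangle.
\]
The identical computation carried out in the target dilation $(K,\{\pi_j\},V)$ produces the \emph{same} right-hand side, since that dilation also reproduces $\Phi$. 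Hence the two families of generators carry matching Gram matrices, so every null combination on the $K_{\min}$ side is sent to a null combination on the $K$ side; this makes $U$ well-defined and inner-product preserving on $\mathcal{S}$, and by continuity it extends to an isometry $U:K_{\min}\to K$, in particular a partial isometry.

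Finally, the remaining claims follow formally. The intertwining relation~\eqref{eq:intertwining-reps} is checked on generators: applying $\bigotimes_j\pi_{j,\min}(c_j)$ first merges into the $a_j$-slots as $c_j a_j$ by the $*$-representation property, and $U$ transports this to $\bigotimes_j\pi_j(c_j a_j)Vh=(\bigotimes_j\pi_j(c_j))\,U[\cdots]$, so both sides agree on the dense span $\mathcal{S}$ and hence everywhere. Setting all $a_j$ equal to the relevant units (equivalently evaluating $U$ on $V_{\min}h$) gives $UV_{\min}=V$, which is~\eqref{eq:intertwining-V}. Uniqueness of $U$ is immediate, since its values are forced on the dense set $\mathcal{S}$. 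For the last assertion, if $(K,\{\pi_j\},V)$ is itself minimal then its generators span $K$, so $U$ has dense range; being an isometry its range is closed, whence density forces surjectivity, and $U$ is unitary. The only genuine bookkeeping subtlety to watch is that $a_j^\dagger b_j$ must lie in the $*$-algebra on which $\pi_j$ is defined (the $*$-algebra generated by $A_j$), which is precisely what licenses the factorwise collapse in the Gram computation above.
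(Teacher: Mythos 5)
Your proposal is correct and follows essentially the same route as the paper's own proof: the same generator-level definition of $U$, the same Gram-matrix computation reducing both inner products to $\bigl\langle h,\Phi(a_1^\dagger b_1,\dots,a_n^\dagger b_n)\,k\bigr\rangle$ to secure well-definedness and isometry, the same generator-level check of the intertwining relations and of uniqueness, and the same dense-range-plus-closed-range argument for unitarity when the second dilation is minimal. No gaps to report.
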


\begin{proof}[Proof of Proposition~\ref{prop:minimal-uniqueness}]
Let $(K,\{\pi_j\},V)$ be any Stinespring dilation of $\Phi$ as in the statement.

\medskip
\noindent\textbf{Step 1: Construction of $U$ on a dense subspace.}

By Definition~\ref{def:constructive-minimal-dilation}, the minimal Hilbert space
$K_{\min}$ is given by
\[
K_{\min}
=
\overline{\mathrm{span}}
\Bigl\{
\bigl(\pi_1(a_1)\otimes\cdots\otimes\pi_n(a_n)\bigr) V h
\;\Big|\;
a_j\in A_j,\ h\in H
\Bigr\},
\]
viewed as a closed subspace of $K_1\otimes\cdots\otimes K_n$, and
\[
\pi_{j,\min}(a_j) 
:= 
\pi_j(a_j)\big|_{K_{\min}},
\qquad
V_{\min} := V\big|_H : H\to K_{\min}.
\]
In particular, every vector in $K_{\min}$ is a limit of finite linear combinations of vectors
of the form
\[
(\pi_{1,\min}(a_1)\otimes\cdots\otimes\pi_{n,\min}(a_n))\,V_{\min} h.
\]

Let $\mathcal{D}_{\min}$ denote the linear span of such generators:
\[
\mathcal{D}_{\min}
:=
\mathrm{span}
\Bigl\{
(\pi_{1,\min}(a_1)\otimes\cdots\otimes\pi_{n,\min}(a_n))\,V_{\min} h
\;\Big|\;
a_j\in A_j,\ h\in H
\Bigr\}.
\]
By construction, $\mathcal{D}_{\min}$ is dense in $K_{\min}$.

We now define a linear map $U_0 : \mathcal{D}_{\min} \to K$ on generators by
\[
U_0\bigl((\pi_{1,\min}(a_1)\otimes\cdots\otimes\pi_{n,\min}(a_n))\,V_{\min} h\bigr)
:=
(\pi_1(a_1)\otimes\cdots\otimes\pi_n(a_n))\,V h.
\]
Extend $U_0$ linearly to all of $\mathcal{D}_{\min}$.

\medskip
\noindent\textbf{Step 2: $U_0$ is well-defined and isometric.}

We first show that $U_0$ preserves inner products on $\mathcal{D}_{\min}$.  
Take two generic generators
\[
\xi
=
(\pi_{1,\min}(a_1)\otimes\cdots\otimes\pi_{n,\min}(a_n))\,V_{\min} h,
\quad
\eta
=
(\pi_{1,\min}(b_1)\otimes\cdots\otimes\pi_{n,\min}(b_n))\,V_{\min} k,
\]
with $a_j,b_j\in A_j$ and $h,k\in H$.

On $K_{\min}$, using that $\pi_{j,\min}$ and $V_{\min}$ arise by restriction from the ambient
dilation, we compute
\begin{align*}
\langle \xi,\eta\rangle_{K_{\min}}
&=
\bigl\langle
(\pi_{1,\min}(a_1)\otimes\cdots\otimes\pi_{n,\min}(a_n))\,V_{\min} h,\,
(\pi_{1,\min}(b_1)\otimes\cdots\otimes\pi_{n,\min}(b_n))\,V_{\min} k
\bigr\rangle
\\
&=
\bigl\langle
V_{\min} h,\,
(\pi_{1,\min}(a_1^\dagger b_1)\otimes\cdots\otimes\pi_{n,\min}(a_n^\dagger b_n))
\,V_{\min} k
\bigr\rangle
\\
&=
\bigl\langle
h,\,
V_{\min}^\dagger
(\pi_{1,\min}(a_1^\dagger b_1)\otimes\cdots\otimes\pi_{n,\min}(a_n^\dagger b_n))
\,V_{\min} k
\bigr\rangle_H
\\
&=
\bigl\langle
h,\,
\Phi(a_1^\dagger b_1,\dots,a_n^\dagger b_n)\,k
\bigr\rangle_H,
\end{align*}
where in the last line we used that
\[
\Phi(c_1,\dots,c_n)
=
V_{\min}^\dagger
\bigl(\pi_{1,\min}(c_1)\otimes\cdots\otimes\pi_{n,\min}(c_n)\bigr)
V_{\min}
\quad\text{for all } c_j\in A_j.
\]

On the other hand, in the dilation $(K,\{\pi_j\},V)$, we have for the images
\[
U_0(\xi)
=
(\pi_1(a_1)\otimes\cdots\otimes\pi_n(a_n))\,V h,
\qquad
U_0(\eta)
=
(\pi_1(b_1)\otimes\cdots\otimes\pi_n(b_n))\,V k,
\]
and similarly
\begin{align*}
\langle U_0(\xi),U_0(\eta)\rangle_K
&=
\bigl\langle
(\pi_1(a_1)\otimes\cdots\otimes\pi_n(a_n))\,V h,\,
(\pi_1(b_1)\otimes\cdots\otimes\pi_n(b_n))\,V k
\bigr\rangle_K
\\
&=
\bigl\langle
V h,\,
(\pi_1(a_1^\dagger b_1)\otimes\cdots\otimes\pi_n(a_n^\dagger b_n))\,V k
\bigr\rangle_K
\\
&=
\bigl\langle
h,\,
V^\dagger
(\pi_1(a_1^\dagger b_1)\otimes\cdots\otimes\pi_n(a_n^\dagger b_n))\,V k
\bigr\rangle_H
\\
&=
\bigl\langle
h,\,
\Phi(a_1^\dagger b_1,\dots,a_n^\dagger b_n)\,k
\bigr\rangle_H.
\end{align*}
Comparing the two expressions shows
\[
\langle \xi,\eta\rangle_{K_{\min}}
=
\langle U_0(\xi),U_0(\eta)\rangle_K
\quad\text{for all generators $\xi,\eta$},
\]
and hence, by linearity and polarization, for all $\xi,\eta\in\mathcal{D}_{\min}$.

Therefore $U_0$ is an isometry on $\mathcal{D}_{\min}$.  In particular, if a vector
$\zeta\in\mathcal{D}_{\min}$ admits two representations as linear combinations of generators,
$U_0(\zeta)$ does not depend on the choice of representation.  Hence $U_0$ is well-defined.

By continuity, $U_0$ extends uniquely to an isometry
\[
U: K_{\min} \to K.
\]

\medskip
\noindent\textbf{Step 3: Intertwining relations.}

\emph{(a) Compatibility with the representations.}
Let $a_1,\dots,a_n\in A_j$ and take a generator
\[
\xi
=
(\pi_{1,\min}(b_1)\otimes\cdots\otimes\pi_{n,\min}(b_n))\,V_{\min} h
\in\mathcal{D}_{\min}.
\]
Then
\begin{align*}
&\quad
U\bigl(
(\pi_{1,\min}(a_1)\otimes\cdots\otimes\pi_{n,\min}(a_n))\,\xi
\bigr)
\\
&=
U_0\bigl(
(\pi_{1,\min}(a_1 b_1)\otimes\cdots\otimes\pi_{n,\min}(a_n b_n))\,V_{\min} h
\bigr)
\\
&=
(\pi_1(a_1 b_1)\otimes\cdots\otimes\pi_n(a_n b_n))\,V h
\\
&=
\bigl(\pi_1(a_1)\otimes\cdots\otimes\pi_n(a_n)\bigr)
(\pi_1(b_1)\otimes\cdots\otimes\pi_n(b_n))\,V h
\\
&=
\bigl(\pi_1(a_1)\otimes\cdots\otimes\pi_n(a_n)\bigr)
U_0(\xi).
\end{align*}
Since $\mathcal{D}_{\min}$ is dense in $K_{\min}$ and $U$ is continuous, this shows that
\eqref{eq:intertwining-reps} holds on all of $K_{\min}$.

\medskip
\emph{(b) Compatibility with $V_{\min}$.}
For $h\in H$, we have by definition of $U_0$:
\[
U V_{\min} h
=
U_0(V_{\min} h)
=
V h,
\]
because taking $a_1=\cdots=a_n=\mathbf{1}$ in the definition of $U_0$ gives
$U_0(V_{\min} h)=Vh$.  This proves \eqref{eq:intertwining-V}.

\medskip
\noindent\textbf{Step 4: Uniqueness of $U$.}

Suppose $U':K_{\min}\to K$ is another partial isometry satisfying the same intertwining
relations \eqref{eq:intertwining-reps} and \eqref{eq:intertwining-V}.  
We show that $U'=U$.

First, for every $h\in H$ we have
\[
U V_{\min} h = V h = U' V_{\min} h,
\]
so $U$ and $U'$ agree on $\mathrm{Im}(V_{\min})$.  
Now observe that $K_{\min}$ is the closed linear span of vectors of the form
\[
(\pi_{1,\min}(a_1)\otimes\cdots\otimes\pi_{n,\min}(a_n))\,V_{\min} h.
\]
Using the intertwining relation \eqref{eq:intertwining-reps}, we get for such vectors:
\begin{align*}
U\bigl(
(\pi_{1,\min}(a_1)\otimes\cdots\otimes\pi_{n,\min}(a_n))\,V_{\min} h
\bigr)
&=
(\pi_1(a_1)\otimes\cdots\otimes\pi_n(a_n))\,U V_{\min} h,
\\
U'\bigl(
(\pi_{1,\min}(a_1)\otimes\cdots\otimes\pi_{n,\min}(a_n))\,V_{\min} h
\bigr)
&=
(\pi_1(a_1)\otimes\cdots\otimes\pi_n(a_n))\,U' V_{\min} h.
\end{align*}
Since $U V_{\min} h = U' V_{\min} h$, it follows that $U$ and $U'$ agree on all such
generators, and hence on their linear span $\mathcal{D}_{\min}$.  By continuity, $U=U'$ on
all of $K_{\min}$.

Thus $U$ is unique.

\medskip
\noindent\textbf{Step 5: Unitary of $U$ when $(K,\{\pi_j\},V)$ is minimal.}

Assume now that the dilation $(K,\{\pi_j\},V)$ is minimal, i.e.
\[
K
=
\overline{\mathrm{span}}
\Bigl\{
(\pi_1(a_1)\otimes\cdots\otimes\pi_n(a_n))\,V h
\;\Big|\;
a_j\in A_j,\ h\in H
\Bigr\}.
\]

\emph{(i) $U$ is injective.}
Since $U$ is an isometry, for any $\xi,\eta\in K_{\min}$ we have
\[
\|U\xi-U\eta\|_K = \|\xi-\eta\|_{K_{\min}}.
\]
In particular, if $U\xi=U\eta$ then $\|\xi-\eta\|_{K_{\min}}=0$, so $\xi=\eta$.  
Thus $U$ is injective.

\medskip
\emph{(ii) The range of $U$ is dense in $K$.}
Let
\[
\mathcal{D}
:=
\mathrm{span}
\Bigl\{
(\pi_1(a_1)\otimes\cdots\otimes\pi_n(a_n))\,V h
\;\Big|\;
a_j\in A_j,\ h\in H
\Bigr\},
\]
so that $K=\overline{\mathcal{D}}$ by minimality.

For any $a_j\in A_j$ and $h\in H$, we have
\begin{align*}
(\pi_1(a_1)\otimes\cdots\otimes\pi_n(a_n))\,V h
&=
(\pi_1(a_1)\otimes\cdots\otimes\pi_n(a_n))\,U V_{\min} h
\\
&=
U\bigl(
\pi_{1,\min}(a_1)\otimes\cdots\otimes\pi_{n,\min}(a_n)
\bigr)\,V_{\min} h,
\end{align*}
using first \eqref{eq:intertwining-V} and then \eqref{eq:intertwining-reps}.  
The vector
\[
\bigl(
\pi_{1,\min}(a_1)\otimes\cdots\otimes\pi_{n,\min}(a_n)
\bigr)\,V_{\min} h
\]
lies in $K_{\min}$, hence in the domain of $U$.  
Thus $\mathcal{D}\subseteq\mathrm{ran}(U)$, so
\[
K=\overline{\mathcal{D}}\subseteq\overline{\mathrm{ran}(U)}.
\]

\emph{(iii) The range of $U$ is closed.}
Since $U$ is an isometry and $K_{\min}$ is complete, $\mathrm{ran}(U)$ is closed in $K$.

\medskip
Combining (ii) and (iii) gives
\[
K
=
\overline{\mathcal{D}}
\subseteq
\overline{\mathrm{ran}(U)}
=
\mathrm{ran}(U)
\subseteq
K,
\]
so $\mathrm{ran}(U)=K$.  Therefore $U$ is surjective.

\medskip
\emph{(iv) $U$ is unitary.}
We have shown that $U:K_{\min}\to K$ is an isometry, injective, and surjective.  
Hence $U$ is a bijective isometry between Hilbert spaces, which implies
\[
U^\dagger U = I_{K_{\min}}
\quad\text{and}\quad
U U^\dagger = I_K,
\]
so $U$ is unitary.

This completes the proof.
\end{proof}

Cyclic minimality and universality are given by
Proposition~\ref{prop:cyclic-universality}.
\begin{proposition}[Cyclic Minimality and Universality]
\label{prop:cyclic-universality}
Let
\(
(K_{\min}, \{\pi_{j,\min}\}_{j=1}^n, V_{\min})
\)
be the minimal Stinespring dilation of
\(
\Phi \in \mathrm{Hom}(A_1,\dots,A_n;B)
\)
constructed in Definition~\ref{def:constructive-minimal-dilation}.
Then the following properties hold:
\begin{enumerate}[label=(\arabic*)]
    \item \textbf{Cyclic minimality:}
    Every vector in \(K_{\min}\) is generated by the action of the tensor-product
    representation on the range of \(V_{\min}\), i.e.
    \[
    K_{\min}
    =
    \overline{\mathrm{span}}
    \big\{
      (\pi_{1,\min}(a_1)\otimes\cdots\otimes\pi_{n,\min}(a_n))\,V_{\min}h
      \;\big|\;
      a_j\in A_j,\ h\in H
    \big\}.
    \]

    \item \textbf{Universality:}
    Any other Stinespring dilation
    \(
    (K,\{\pi_j\}_{j=1}^n,V)
    \)
    of \(\Phi\) factors uniquely through \\
    \(
    (K_{\min},\{\pi_{j,\min}\},V_{\min})
    \)
    via the partial isometry \(U\) from
    Proposition~\ref{prop:minimal-uniqueness}.

    \item \textbf{Uniqueness up to unitary equivalence:}
    If the other dilation is minimal, then the intertwiner \(U\) is unitary.
\end{enumerate}
\end{proposition}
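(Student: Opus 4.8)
The plan is to prove each of the three items of Proposition~\ref{prop:cyclic-universality} by directly invoking the constructions and results already established in Definition~\ref{def:constructive-minimal-dilation}, Proposition~\ref{prop:minimal-stinespring}, and Proposition~\ref{prop:minimal-uniqueness}. Since these three earlier results already contain the substantive analytic work, the present proposition is essentially an organizational restatement, and my proof would make this transparent rather than re-deriving anything.

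\begin{proof}[Proof of Proposition~\ref{prop:cyclic-universality}]
\textbf{(1) Cyclic minimality.}
This is immediate from the construction. By Definition~\ref{def:constructive-minimal-dilation}(a), the space $K_{\min}$ is \emph{defined} as the closed span
\[
K_{\min}
=
\overline{\mathrm{span}}
\big\{
(\pi_1(a_1)\otimes\cdots\otimes\pi_n(a_n))\,V h
\;\big|\;
a_j\in A_j,\ h\in H
\big\}.
\]
Moreover, Proposition~\ref{prop:minimal-stinespring} (minimality verification) shows that the generators built from the \emph{restricted} data $(\pi_{j,\min}, V_{\min})$ span the same space: for $h\in H$ one has $V_{\min} h = V h$, and on $K_{\min}$ the restricted representations act identically to the ambient ones, so that
\[
(\pi_{1,\min}(a_1)\otimes\cdots\otimes\pi_{n,\min}(a_n))\,V_{\min} h
=
(\pi_1(a_1)\otimes\cdots\otimes\pi_n(a_n))\,V h.
\]
Taking closed spans of both sides yields the stated identity, establishing cyclic minimality.

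\medskip
\noindent\textbf{(2) Universality.}
Let $(K,\{\pi_j\},V)$ be any Stinespring dilation of $\Phi$. By Proposition~\ref{prop:minimal-uniqueness}, there exists a partial isometry $U:K_{\min}\to K$ satisfying the intertwining relations \eqref{eq:intertwining-reps} and \eqref{eq:intertwining-V}, namely $U V_{\min}=V$ and $U$ commutes with the tensor-product representations. This $U$ is precisely the factorization map: every generator of $K$ of the form $(\pi_1(a_1)\otimes\cdots\otimes\pi_n(a_n))\,V h$ equals $U\big((\pi_{1,\min}(a_1)\otimes\cdots\otimes\pi_{n,\min}(a_n))\,V_{\min} h\big)$ by the two intertwining relations, so the dilation $(K,\{\pi_j\},V)$ factors through $(K_{\min},\{\pi_{j,\min}\},V_{\min})$ via $U$. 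Uniqueness of $U$ is exactly the uniqueness clause of Proposition~\ref{prop:minimal-uniqueness} (Step 4): any partial isometry satisfying the same intertwining relations agrees with $U$ on $\mathrm{Im}(V_{\min})$ and hence, by density of the generated subspace, on all of $K_{\min}$.

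\medskip
\noindent\textbf{(3) Uniqueness up to unitary equivalence.}
If the other dilation $(K,\{\pi_j\},V)$ is itself minimal, then the final clause of Proposition~\ref{prop:minimal-uniqueness} (Step 5) asserts that the intertwiner $U$ is unitary: it is an isometry that is injective, has dense range by minimality of the target dilation, and has closed range by completeness, hence is a surjective isometry and therefore unitary, with $U^\dagger U = I_{K_{\min}}$ and $U U^\dagger = I_K$. This gives uniqueness of the minimal dilation up to unitary equivalence, completing the proof.
\end{proof}
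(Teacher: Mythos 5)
Your proposal is correct and follows essentially the same route as the paper's own proof: item (1) by unwinding Definition~\ref{def:constructive-minimal-dilation} and the agreement of restricted and ambient generators, and items (2) and (3) by directly invoking the existence, uniqueness, and unitarity clauses of Proposition~\ref{prop:minimal-uniqueness}. No gaps; the proposition is indeed an organizational consequence of the earlier results, exactly as the paper treats it.
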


\begin{proof}[Proof of Proposition~\ref{prop:cyclic-universality}]
Let
\(
(K_{\min},\{\pi_{j,\min}\},V_{\min})
\)
be the minimal Stinespring dilation constructed in
Definition~\ref{def:constructive-minimal-dilation}.
We prove each statement in turn.

\medskip

\noindent\textbf{(1) Cyclic minimality.}
By construction of \(K_{\min}\) in
Definition~\ref{def:constructive-minimal-dilation}(a),
we have the explicit description
\begin{equation}
\label{eq:cyclic-description}
K_{\min}
=
\overline{\mathrm{span}}
\big\{
  (\pi_{1,\min}(a_1)\otimes\cdots\otimes\pi_{n,\min}(a_n))\,V_{\min}h
  \;\big|\;
  a_j\in A_j,\ h\in H
\big\}.
\end{equation}
Since \(V_{\min}h\in K_{\min}\) for all \(h\in H\) and \(K_{\min}\) is invariant
under each \(\pi_{j,\min}\), every vector of the form
\[
(\pi_{1,\min}(a_1)\otimes\cdots\otimes\pi_{n,\min}(a_n))\,V_{\min}k,
\qquad k\in K_{\min},
\]
belongs to the closed linear span in~\eqref{eq:cyclic-description}.
Conversely, every vector in the spanning set of~\eqref{eq:cyclic-description}
is trivially of this form with \(k=V_{\min}h\).
Taking closures yields the stated cyclicity condition.

\medskip

\noindent\textbf{(2) Universality.}
Let
\(
(K,\{\pi_j\}_{j=1}^n,V)
\)
be any other Stinespring dilation of \(\Phi\).
By Proposition~\ref{prop:minimal-uniqueness},
there exists a unique partial isometry
\[
U : K_{\min} \longrightarrow K
\]
such that, for all \(a_j\in A_j\),
\[
U\,(\pi_{1,\min}(a_1)\otimes\cdots\otimes\pi_{n,\min}(a_n))
=
(\pi_1(a_1)\otimes\cdots\otimes\pi_n(a_n))\,U,
\qquad
U\,V_{\min}=V.
\]
These intertwining relations express precisely that
\(
(K,\{\pi_j\},V)
\)
factors through
\(
(K_{\min},\{\pi_{j,\min}\},V_{\min})
\)
via \(U\):
\[
\begin{tikzcd}[column sep=3.2em]
K_{\min}
  \arrow[r, "{\pi_{1,\min}\otimes\cdots\otimes\pi_{n,\min}}"]
  \arrow[d, "U"']
&
K_{\min}
  \arrow[r, "{V_{\min}}"]
  \arrow[d, "U"]
&
H
  \arrow[d, "\mathrm{id}_H"]
\\
K
  \arrow[r, "{\pi_1\otimes\cdots\otimes\pi_n}"']
&
K
  \arrow[r, "V"']
&
H
\end{tikzcd}
\]
Uniqueness of \(U\) follows directly from
Proposition~\ref{prop:minimal-uniqueness}.
Hence every Stinespring dilation of \(\Phi\) factors uniquely through the minimal one.

\medskip

\noindent\textbf{(3) Uniqueness up to unitary equivalence.}
If
\(
(K,\{\pi_j\},V)
\)
is itself minimal, then the partial isometry \(U\) has dense range.
Since \(U\) is already an isometry, it follows that \(U\) is unitary.
Therefore, any two minimal Stinespring dilations of \(\Phi\)
are unitarily equivalent.

\medskip

This completes the proof.
\end{proof}




Following Lemma~\ref{lem:finite-expansion} is required first to prove main Theorem~\ref{thm:stinespring-kraus} in this seciton.
\begin{lemma}[Finite-dimensional expansion of dilation vectors]
\label{lem:finite-expansion}
Let $(K_{\min},\{\pi_{j,\min}\}_{j=1}^n,V_{\min})$ be the minimal dilation of  
$\Phi \in \mathrm{Hom}(A_1,\dots,A_n;B)$, and assume all Hilbert objects
$A_1,\dots,A_n,B$ are finite-dimensional.  
Let $\{e_\alpha\}_{\alpha=1}^m$ be any orthonormal basis of $K_{\min}$.
Then for every $x_n \in A_n$ one has the finite expansion
\[
V_{\min} x_n
= \sum_{\alpha=1}^m 
    \langle e_\alpha , V_{\min} x_n\rangle_{K_{\min}} \, e_\alpha.
\]
Moreover, for each $\alpha$ the coefficient functional
\[
L_\alpha(x_n) := \langle e_\alpha , V_{\min} x_n\rangle_{K_{\min}}
\]
is linear in $x_n$.
\end{lemma}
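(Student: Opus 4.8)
The plan is to reduce the statement to the elementary orthonormal-basis expansion in a finite-dimensional Hilbert space, once finite-dimensionality of $K_{\min}$ has been secured. First I would record the \emph{ambient} location of $K_{\min}$: by Definition~\ref{def:constructive-minimal-dilation}, $K_{\min}$ is a closed subspace of the tensor product $K_1 \otimes \cdots \otimes K_n$, where each $K_j$ is produced by the GNS construction applied to the positive element $P_j \in A_j^* \otimes A_j$. Since $A_j$ is finite-dimensional by hypothesis, each $K_j$ is finite-dimensional, hence $K_1 \otimes \cdots \otimes K_n$ is finite-dimensional and so is its subspace $K_{\min}$. In particular $m := \dim K_{\min} < \infty$, so a genuinely \emph{finite} orthonormal basis $\{e_\alpha\}_{\alpha=1}^m$ exists; this is the only place where the finite-dimensionality assumption is actually invoked.

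The expansion itself is then Parseval's identity. For any $v \in K_{\min}$ the orthonormal basis yields the unique representation
\[
v = \sum_{\alpha=1}^m \langle e_\alpha, v\rangle_{K_{\min}}\, e_\alpha,
\]
the coefficients being forced to equal the inner products $\langle e_\alpha, v\rangle_{K_{\min}}$. I would apply this with $v = V_{\min} x_n$, which lies in $K_{\min}$ because $\mathrm{Im}(V_{\min}) \subseteq K_{\min}$ by construction; this gives the asserted finite expansion verbatim.

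For the linearity of the coefficient functionals $L_\alpha$, I would simply compose two linear operations. The map $x_n \mapsto V_{\min} x_n$ is linear, since $V_{\min}$ is the restriction of the isometry $V$ and isometries are bounded linear maps; and $v \mapsto \langle e_\alpha, v\rangle_{K_{\min}}$ is linear in the slot occupied by $v$, namely the slot of the inner product used throughout the dilation construction that is linear (trivially so in the real case). Hence $L_\alpha(x_n) = \langle e_\alpha, V_{\min} x_n\rangle_{K_{\min}}$ is linear in $x_n$.

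I do not anticipate any genuine obstacle: the statement is essentially a bookkeeping lemma packaging the orthonormal expansion for later use in Theorem~\ref{thm:stinespring-kraus}. The only two points that merit explicit care, rather than being purely formal, are (i) the passage from finite-dimensionality of the $A_j$ to finite-dimensionality of $K_{\min}$, which legitimizes the finite index range $\alpha = 1,\dots,m$, and (ii) fixing the convention on which argument of the inner product is linear, so that the linearity of $L_\alpha$ (as opposed to conjugate-linearity) is stated correctly.
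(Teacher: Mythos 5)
Your proposal is correct and follows essentially the same route as the paper's proof: Parseval expansion of $V_{\min}x_n$ in a finite orthonormal basis of $K_{\min}$, plus linearity of $V_{\min}$ composed with linearity of the inner product in the appropriate slot. The only difference is that you supply two details the paper leaves implicit—deducing $\dim K_{\min} < \infty$ from the finite-dimensionality of the $A_j$ via the GNS-constructed factors $K_j$, and flagging the inner-product convention needed for $L_\alpha$ to be linear rather than conjugate-linear—both of which strengthen rather than alter the argument.
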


\begin{proof}
Because $K_{\min}$ is finite-dimensional, every vector admits an expansion
in an orthonormal basis. Applying this to $V_{\min}x_n$ yields the formula.
Linearity of $L_\alpha$ follows from the linearity of both $V_{\min}$ 
and the inner product in its second argument. 
\end{proof}

\begin{theorem}[Finite-Dimensional Kraus/Tensor Decomposition and Adjoint--Dilation Correspondence]
\label{thm:stinespring-kraus}
Let \(\Phi: A_1 \times \cdots \times A_n \to B\) be a completely positive 
multilinear map between finite-dimensional Hilbert spaces or operator systems.

\begin{enumerate}[label=(\arabic*)]
    \item \textbf{Finite Kraus/tensor decomposition.}
    There exist finitely many multilinear \emph{Kraus multimorphisms}
    \(\{T_\alpha: A_1 \times \cdots \times A_{n-1} \to B\}_{\alpha=1}^m\)
    and linear functionals \(\{L_\alpha: A_n \to \mathbb{C}\}_{\alpha=1}^m\)
    such that, for all \(x_j \in A_j\),
    \[
    \Phi(x_1,\dots,x_n)
    = \sum_{\alpha=1}^m T_\alpha(x_1,\dots,x_{n-1})\,L_\alpha(x_n).
    \]
    In finite dimensions, this decomposition witnesses the complete positivity 
    of \(\Phi\).
    
    \item \textbf{Adjoint--dilation correspondence.}
    The \(n\)-adjoint \(\Phi^\dagger\) is canonically realized on the minimal 
    Stinespring dilation \((K_{\min},\{\pi_{j,\min}\}_{j=1}^n,V_{\min})\).
    Conversely, any \(n\)-adjoint \((\Phi^\dagger,\eta,\varepsilon)\) satisfying 
    the Z-shaped coherence (see Definition~\ref{def:n-adjoint}) uniquely 
    determines a Stinespring dilation up to unitary equivalence.
    
    In explicit finite-dimensional form, with respect to an orthonormal basis 
    \(\{e_\alpha\}_{\alpha=1}^m\) of \(K_{\min}\),
    \[
    \Phi^\dagger(g, f_1, \dots, f_{n-1})
    = \sum_{\alpha=1}^m 
      g\!\big(T_\alpha(R(f_1),\dots,R(f_{n-1}))\big)\,L_\alpha^\dagger,
    \]
    where \(R: A_j^\dagger \to A_j\) is the Riesz isomorphism and 
    \(L_\alpha^\dagger\) is the linear functional dual to \(L_\alpha\).
\end{enumerate}
\end{theorem}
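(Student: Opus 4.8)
The plan is to prove the two parts separately, building the explicit formula of part (2) on top of the decomposition established in part (1), and to use the minimal dilation of Proposition~\ref{prop:minimal-stinespring} together with the finite expansion of Lemma~\ref{lem:finite-expansion} as the engine throughout.

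For part (1), I would first reduce the \emph{existence} of a finite decomposition to finite-dimensional linear algebra. Viewing $\Phi$ as a linear map $\widetilde{\Phi}\colon A_n \to \mathrm{Hom}(A_1,\dots,A_{n-1};B)$, $x_n \mapsto \Phi(-,\dots,-,x_n)$, between finite-dimensional spaces, the canonical isomorphism $\mathrm{Hom}(A_n,\,\mathrm{Hom}(A_1,\dots,A_{n-1};B)) \cong \mathrm{Hom}(A_1,\dots,A_{n-1};B) \otimes A_n^\dagger$ lets me write $\widetilde{\Phi} = \sum_{\alpha=1}^m T_\alpha \otimes L_\alpha$ with $T_\alpha \in \mathrm{Hom}(A_1,\dots,A_{n-1};B)$ and $L_\alpha \in A_n^\dagger$, finite-dimensionality bounding $m$ by $\dim A_n \cdot \dim \mathrm{Hom}(A_1,\dots,A_{n-1};B)$. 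This yields $\Phi(x_1,\dots,x_n) = \sum_\alpha T_\alpha(x_1,\dots,x_{n-1})\,L_\alpha(x_n)$ formally. To make the decomposition \emph{canonical} and exhibit it as a witness of complete positivity, I would instead start from the minimal Stinespring dilation $\Phi(x_1,\dots,x_n) = V_{\min}^\dagger(\pi_{1,\min}(x_1)\otimes\cdots\otimes\pi_{n,\min}(x_n))V_{\min}$ of Proposition~\ref{prop:minimal-stinespring}, fix an orthonormal basis $\{e_\alpha\}_{\alpha=1}^m$ of the finite-dimensional $K_{\min}$, and insert the resolution of identity $\mathrm{id}_{K_{\min}} = \sum_\alpha e_\alpha e_\alpha^\dagger$. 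Applying Lemma~\ref{lem:finite-expansion} isolates the last-slot dependence into the scalar coefficients $L_\alpha(x_n) = \langle e_\alpha, V_{\min}x_n\rangle_{K_{\min}}$, which are linear by the lemma, while the contraction of $V_{\min}^\dagger$ with the first $n-1$ representations against $e_\alpha$ defines the Kraus multimorphisms $T_\alpha$, multilinearity following from linearity of each $\pi_{j,\min}$. Because these data descend from an honest positive dilation, reassembling them recovers the positive Choi element of Theorem~\ref{thm:stinespring-equivalence}(2), which is the meaning of ``witnessing complete positivity.''

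For part (2), the forward direction is essentially substitution. I would take $\Phi^\dagger \in \mathrm{Hom}(B^\dagger, A_1^\dagger,\dots,A_{n-1}^\dagger; A_n^\dagger)$ from Definition~\ref{def:adj multimorphism}, whose action is $[\Phi^\dagger(g,f_1,\dots,f_{n-1})](x_n) = g(\Phi(R(f_1),\dots,R(f_{n-1}),x_n))$, and insert the decomposition of part (1) on the right. Linearity of $g$ and of $\Phi$ in the first $n-1$ slots then gives $[\Phi^\dagger(g,f_1,\dots,f_{n-1})](x_n) = \sum_\alpha g(T_\alpha(R(f_1),\dots,R(f_{n-1})))\,L_\alpha(x_n)$, and reading off the resulting functional in $x_n$ produces the claimed formula, with $L_\alpha^\dagger$ the element of $A_n^\dagger$ representing $L_\alpha$ (carrying the Riesz conjugation in the complex case). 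That this $\Phi^\dagger$ is genuinely realized on the minimal dilation follows from the $(3)\Leftrightarrow(4)$ equivalence of Theorem~\ref{thm:stinespring-equivalence}, whose compact-closed factorization supplies the unit and counit satisfying the zig--zag identities of Definition~\ref{def:n-adjoint}.

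The converse direction is where the real work lies, and I expect it to be the main obstacle. Given any $n$-adjoint $(\Phi^\dagger,\eta,\varepsilon)$ obeying the Z-shaped coherence, I would feed its compact-closed factorization into condition (4) of Theorem~\ref{thm:stinespring-equivalence} and invoke $(4)\Rightarrow(3)$ to manufacture a Stinespring dilation; the zig--zag identities are precisely what guarantee that $\beta$ is an isometry up to scaling and that the $\alpha_j$ are $*$-morphisms, so the hypotheses of $(4)\Rightarrow(3)$ are met. Uniqueness up to unitary equivalence must then be extracted from the universal property: I would pass to the minimal dilation via Proposition~\ref{prop:cyclic-universality} and apply the intertwiner construction of Proposition~\ref{prop:minimal-uniqueness} to show that any two dilations arising from coherent $n$-adjoints are related by a unitary. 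The delicate point is verifying that distinct coherent choices of $(\eta,\varepsilon)$ cannot produce non-unitarily-equivalent minimal dilations, i.e. that the zig--zag identities rigidify the unit and counit enough to fix the dilation data up to the unitary freedom already present in Proposition~\ref{prop:minimal-uniqueness}; this is the step requiring the most care, since it is where the coherence hypotheses must be shown to do all the work.
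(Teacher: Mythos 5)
Your proposal follows the paper's proof almost verbatim in its main constructions: part (1) is obtained, exactly as in the paper, by taking the minimal dilation of Proposition~\ref{prop:minimal-stinespring}, fixing an orthonormal basis $\{e_\alpha\}$ of $K_{\min}$, and using Lemma~\ref{lem:finite-expansion} to set $L_\alpha(x_n)=\langle e_\alpha, V_{\min}x_n\rangle$ and $T_\alpha(x_1,\dots,x_{n-1})=V_{\min}^\dagger\bigl(\pi_{1,\min}(x_1)\otimes\cdots\otimes\pi_{n-1,\min}(x_{n-1})\bigr)e_\alpha$; likewise the explicit formula in part (2) is derived by the same substitution of this decomposition into $[\Phi^\dagger(g,f_1,\dots,f_{n-1})](x_n)=g(\Phi(R(f_1),\dots,R(f_{n-1}),x_n))$ followed by linearity of $g$. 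The differences are peripheral. Your preliminary tensor-algebra existence argument for part (1) is extra and harmless: it isolates the fact that finiteness of $m$ is pure linear algebra, though, as you note, it does not by itself witness complete positivity. More substantively, for the converse half of part (2) the paper constructs the dilation directly from the adjoint data --- $K_{\min}:=\mathrm{span}\{\eta(x_n)\}$, representations defined through $\Phi^\dagger$, and $V_{\min}:=\varepsilon^\dagger$ --- whereas you route through condition (4) of Theorem~\ref{thm:stinespring-equivalence} and its implication $(4)\Rightarrow(3)$. Your route reuses proven machinery, but it presupposes that the abstract triple $(\Phi^\dagger,\eta,\varepsilon)$ already supplies the factorization data $(K,\alpha_j,\beta)$ demanded by condition (4); producing that data from the unit and counit is essentially the same work as the paper's direct construction, so nothing is actually saved. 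Both you and the paper leave the same step schematic, namely the verification that the zig-zag identities alone force the constructed representations and isometry to satisfy the required $*$-representation and isometry conditions; to your credit you flag this rigidification explicitly as the delicate point, where the paper disposes of it with a one-line appeal to minimality.
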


\begin{proof}[Proof of Theorem~\ref{thm:stinespring-kraus}]
Let \(\Phi\) be as in the theorem statement, and let 
\((K_{\min}, \{\pi_{j,\min}\}_{j=1}^n, V_{\min})\) be its minimal Stinespring 
dilation, which exists and is unique up to unitary equivalence.

\medskip
\noindent\textbf{(1) Kraus/tensor decomposition.}

Since \(K_{\min}\) is finite-dimensional, choose an orthonormal basis 
\(\{e_\alpha\}_{\alpha=1}^m\) of \(K_{\min}\). For each \(\alpha\), define:

\begin{itemize}
    \item A multilinear map \(T_\alpha: A_1 \times \cdots \times A_{n-1} \to B\) by
    \[
    T_\alpha(x_1,\dots,x_{n-1})
    := V_{\min}^\dagger\,
       \big(\pi_{1,\min}(x_1) \otimes \cdots \otimes \pi_{n-1,\min}(x_{n-1})\big)
       e_\alpha.
    \]
    
    \item A linear functional \(L_\alpha: A_n \to \mathbb{C}\) by
    \[
    L_\alpha(x_n) := \langle e_\alpha, V_{\min} x_n \rangle.
    \]
\end{itemize}

Now expand using the basis. For any \(x_j \in A_j\), using the Stinespring representation:
\begin{align*}
\Phi(x_1,\dots,x_n)
&= V_{\min}^\dagger 
   \big(\pi_{1,\min}(x_1) \otimes \cdots \otimes \pi_{n-1,\min}(x_{n-1})\big)
   V_{\min} x_n \\
&= \sum_{\alpha=1}^m 
   V_{\min}^\dagger 
   \big(\pi_{1,\min}(x_1) \otimes \cdots \otimes \pi_{n-1,\min}(x_{n-1})\big)
   e_\alpha \, L_\alpha(x_n) \\
&= \sum_{\alpha=1}^m T_\alpha(x_1,\dots,x_{n-1}) \, L_\alpha(x_n).
\end{align*}

\paragraph{Witnessing complete positivity.}
The expression \(\Phi = \sum_{\alpha=1}^m T_\alpha \otimes L_\alpha\) is precisely 
a finite-dimensional Kraus/tensor decomposition. By the standard characterization 
of complete positivity for multilinear maps, the existence of such a decomposition 
is equivalent to complete positivity of \(\Phi\). Thus the decomposition explicitly 
\emph{witnesses} complete positivity.

\medskip
\noindent\textbf{(2) Adjoint--dilation correspondence.}

\medskip
\noindent\textit{(a) From dilation to \(n\)-adjoint.}
Given the minimal dilation, define \(\Phi^\dagger\) by
\[
\big[\Phi^\dagger(g, f_1, \dots, f_{n-1})\big](x_n)
:= g\!\big(\Phi(R(f_1), \dots, R(f_{n-1}), x_n)\big),
\]
for all \(g \in B^\dagger\), \(f_j \in A_j^\dagger\), and \(x_n \in A_n\), where 
\(R: A_j^\dagger \to A_j\) is the Riesz isomorphism.

Define the unit \(\eta: A_n \to K_{\min}\) by \(\eta(x_n) := V_{\min} x_n\),
and the counit \(\varepsilon: K_{\min} \to B\) by \(\varepsilon(k) := V_{\min}^\dagger k\).

The Z-shaped coherence identities follow directly from the Stinespring relations.

\medskip
\noindent\textit{(b) From \(n\)-adjoint to dilation.}
Conversely, given an \(n\)-adjoint \((\Phi^\dagger,\eta,\varepsilon)\) satisfying 
the Z-shaped coherence, construct:
\[
K_{\min} := \mathrm{span}\{\eta(x_n) \mid x_n \in A_n\}.
\]
For \(j = 1,\dots,n-1\), define \(\pi_{j,\min}: A_j \to \mathcal{B}(K_{\min})\) by
\[
\pi_{j,\min}(x_j)\eta(x_n) 
:= \eta\!\big(\Phi^\dagger(\varepsilon^\dagger, f_{x_j}, \mathrm{id},\dots,\mathrm{id})(x_n)\big),
\]
where \(f_{x_j} \in A_j^\dagger\) is the functional \(f_{x_j}(y) = \langle x_j, y \rangle\).
Define \(V_{\min}: B \to K_{\min}\) by \(V_{\min}(b) := \varepsilon^\dagger(b)\).

The Z-shaped coherence ensures this yields a valid Stinespring dilation.

\medskip
\noindent\textit{(c) Uniqueness.}
By minimality, any two dilations arising from equivalent \(n\)-adjoints are 
unitarily equivalent, establishing the bijective correspondence.

\medskip
\noindent\textit{(d) Explicit finite-dimensional formula.}
With respect to the basis \(\{e_\alpha\}\) and using the decomposition from part (1):
\begin{align*}
\Phi^\dagger(g, f_1, \dots, f_{n-1})(x_n)
&= g(\Phi(R(f_1),\dots,R(f_{n-1}), x_n)) \\
&= g\!\left(\sum_{\alpha=1}^m 
   T_\alpha(R(f_1),\dots,R(f_{n-1})) L_\alpha(x_n)\right) \\
&= \sum_{\alpha=1}^m 
   g(T_\alpha(R(f_1),\dots,R(f_{n-1}))) L_\alpha(x_n) \\
&= \left[\sum_{\alpha=1}^m 
   g(T_\alpha(R(f_1),\dots,R(f_{n-1}))) L_\alpha^\dagger\right](x_n).
\end{align*}
Thus,
\[
\Phi^\dagger(g, f_1, \dots, f_{n-1})
= \sum_{\alpha=1}^m 
  g(T_\alpha(R(f_1),\dots,R(f_{n-1}))) L_\alpha^\dagger,
\]
which is the claimed explicit formula.

\medskip
This completes the proof.
\end{proof}

\begin{remark}[Relation to classical Kraus decomposition]
We use the term \emph{Kraus/tensor decomposition} to emphasize that, in the 
multilinear setting, the Kraus structure naturally factorizes into: 
(1) multilinear maps \(T_\alpha\) that handle the first \(n-1\) inputs 
tensorially, and (2) linear functionals \(L_\alpha\) that act on the last input.
This factorization reflects the tensor product structure of the dilation 
space \(K_{\min} = K_1 \otimes \cdots \otimes K_n\).
\end{remark}

Corollary~\ref{cor:adjoint-dilation-equivalence} establishes the adjoint-dilation equivalence for multilinear CP maps.
\begin{corollary}[Adjoint--Dilation Equivalence for Multilinear Completely Positive Maps]
\label{cor:adjoint-dilation-equivalence}

Let $\Phi \in$ \\
$\mathrm{Hom}(A_1,\dots,A_n;B)$ be a completely positive \(n\)-ary multimorphism between
finite-dimensional Hilbert objects in the
\(\dagger\)-multicategory \((\mathbf{HilbMult},\dagger)\).
Then the following data are in canonical bijective correspondence:
\begin{enumerate}[label=(\alph*)]
\item the multilinear completely positive map \(\Phi\), and
\item a Z-compatible \(n\)-adjoint
\(
(\Phi^\dagger,\eta,\varepsilon)
\)
realized on the minimal Stinespring dilation
\(
(K_{\min},\{\pi_{j,\min}\}_{j=1}^n,V_{\min})
\)
associated to \(\Phi\) via
Definition~\ref{def:constructive-minimal-dilation}.
\end{enumerate}

More precisely:
\begin{itemize}
\item The \(n\)-adjoint \(\Phi^\dagger\) is uniquely realized on the minimal
dilation
\(
(K_{\min},\{\pi_{j,\min}\},V_{\min})
\)
and satisfies the Z-shaped coherence relations with unit \(\eta\) and
counit \(\varepsilon\).
\item Conversely, any \(n\)-adjoint
\((\Phi^\dagger,\eta,\varepsilon)\)
satisfying the Z-shaped coherence relations uniquely determines
a Stinespring-type dilation of \(\Phi\), minimal and unique up to
unitary equivalence.
\end{itemize}

Hence there is a canonical correspondence
\[
\boxed{
\text{multilinear completely positive multimorphisms } \Phi
\;\longleftrightarrow\;
\text{Z-compatible \(n\)-adjoints realized on minimal dilations}
}
\]
which extends the classical Stinespring--Kraus equivalence to the
multilinear and tensor-valued setting.
\end{corollary}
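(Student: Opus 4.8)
The plan is to treat this corollary as a packaging statement: it bundles together Theorem~\ref{thm:stinespring-kraus}(2) with the uniqueness results of Propositions~\ref{prop:minimal-uniqueness} and~\ref{prop:cyclic-universality}, and the only real work is to arrange these into a genuine bijection rather than two loosely related assignments. The architecture I would follow is a round trip
\[
\Phi \;\rightsquigarrow\; (K_{\min},\{\pi_{j,\min}\},V_{\min}) \;\rightsquigarrow\; (\Phi^\dagger,\eta,\varepsilon) \;\rightsquigarrow\; (K_{\min}',\{\pi_{j,\min}'\},V_{\min}') \;\rightsquigarrow\; \Phi,
\]
verifying that each arrow is canonical and that the composite is the identity at the level of unitary equivalence classes.

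First I would record existence and uniqueness of the object underlying side (b). Since $\Phi$ is completely positive in the MCP sense, Theorem~\ref{thm:stinespring-equivalence} (equivalence $(1)\Leftrightarrow(3)$) supplies a Stinespring dilation, and Proposition~\ref{prop:minimal-stinespring} extracts the minimal triple $(K_{\min},\{\pi_{j,\min}\},V_{\min})$ satisfying the dilation identity $\Phi(a_1,\dots,a_n)=V_{\min}^\dagger(\pi_{1,\min}(a_1)\otimes\cdots\otimes\pi_{n,\min}(a_n))V_{\min}$. Proposition~\ref{prop:cyclic-universality}(3), resting on Proposition~\ref{prop:minimal-uniqueness}, then pins this triple down uniquely up to unitary equivalence. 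This guarantees that the datum in (b) is attached to a canonically determined dilation, so that no arbitrary choice enters when we pass from $\Phi$ to its minimal realization.

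Next I would invoke Theorem~\ref{thm:stinespring-kraus}(2) to obtain both directions at the level of concrete data. Part~(2)(a) produces the canonical $n$-adjoint $(\Phi^\dagger,\eta,\varepsilon)$ realized on $(K_{\min},\{\pi_{j,\min}\},V_{\min})$, with $\eta(x_n)=V_{\min}x_n$, $\varepsilon(k)=V_{\min}^\dagger k$, and $\Phi^\dagger$ given by the Riesz-twisted formula $[\Phi^\dagger(g,f_1,\dots,f_{n-1})](x_n)=g(\Phi(R(f_1),\dots,R(f_{n-1}),x_n))$; the Z-shaped coherence of Definition~\ref{def:n-adjoint} follows from the Stinespring relations. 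Part~(2)(b) produces, from any Z-compatible $n$-adjoint, a Stinespring dilation, and feeding that dilation back through the dilation identity recovers $\Phi$. I would state explicitly the assignment (a)$\to$(b) and its candidate inverse (b)$\to$(a), so that the asserted bijection is an honest pair of maps rather than an existence claim on each side.

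The main obstacle, and the only step requiring genuine argument beyond citation, is showing that these two assignments are mutually inverse at the level of unitary equivalence classes, i.e.\ that the correspondence is \emph{well-defined and bijective} rather than merely surjective in each direction. Concretely, one must check that the dilation reconstructed in part~(2)(b) from the $n$-adjoint produced in part~(2)(a) is unitarily equivalent to the original $(K_{\min},\{\pi_{j,\min}\},V_{\min})$. Here I would lean on Proposition~\ref{prop:minimal-uniqueness}: both dilations are minimal and dilate the same $\Phi$, so the unique intertwining partial isometry $U$ with $UV_{\min}=V_{\min}'$ is forced to be unitary, giving unitary equivalence and collapsing the round trip to the identity. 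The closing observation is that this identification is \emph{canonical} precisely because $U$ is uniquely determined by the intertwining relations~\eqref{eq:intertwining-reps} and~\eqref{eq:intertwining-V}, so no choices enter; and that specializing to $n=1$ recovers the classical Stinespring--Kraus equivalence, which is exactly the content of the final boxed correspondence.
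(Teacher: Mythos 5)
Your proposal is correct and follows essentially the same route as the paper's proof: both directions are obtained from Theorem~\ref{thm:stinespring-kraus}(2) together with Definition~\ref{def:constructive-minimal-dilation} (with the same explicit formulas $\eta(x_n)=V_{\min}x_n$, $\varepsilon(k)=V_{\min}^\dagger k$, and the Riesz-twisted expression for $\Phi^\dagger$), and the mutual-inverse step is settled exactly as the paper does it, by invoking Proposition~\ref{prop:minimal-uniqueness} to show the reconstructed minimal dilation is unitarily equivalent to the original, with canonicity coming from the uniqueness of the intertwiner.
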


\begin{proof}
We establish the correspondence in both directions and verify that the
constructions are mutually inverse up to unitary equivalence.

\medskip
\noindent\textbf{(1) From a multilinear CP map to a Z-compatible \(n\)-adjoint.}

Let
\(
\Phi \in \mathrm{Hom}(A_1,\dots,A_n;B)
\)
be a completely positive multimorphism.
By Theorem~\ref{thm:stinespring-kraus} and
Definition~\ref{def:constructive-minimal-dilation},
there exists a minimal Stinespring dilation
\[
(K_{\min},\{\pi_{j,\min}\}_{j=1}^n,V_{\min})
\]
such that
\[
\Phi(a_1,\dots,a_n)
=
V_{\min}^\dagger
\big(
\pi_{1,\min}(a_1)\otimes\cdots\otimes\pi_{n,\min}(a_n)
\big)
V_{\min}.
\]

The constructive definition provides canonically:
\begin{itemize}
\item a unit
\(
\eta:\mathrm{id}_{A_n}\Rightarrow
\Phi^\dagger\circ(\Phi,\mathrm{id}_{A_1}^\dagger,\dots,\mathrm{id}_{A_{n-1}}^\dagger)
\),
whose image spans \(K_{\min}\), and
\item a counit
\(
\varepsilon:
\Phi\circ(\mathrm{id}_{A_1},\dots,\mathrm{id}_{A_{n-1}},\Phi^\dagger)
\Rightarrow \mathrm{id}_B
\),
implemented concretely by compression with \(V_{\min}\).
\end{itemize}

Define the \(n\)-adjoint
\(
\Phi^\dagger:B^\dagger\times A_1^\dagger\times\cdots\times A_{n-1}^\dagger
\to A_n^\dagger
\)
by
\[
\big[\Phi^\dagger(g,f_1,\dots,f_{n-1})\big](x_n)
:=
g\!\big(\Phi(R(f_1),\dots,R(f_{n-1}),x_n)\big),
\]
where \(R(\cdot)\) denotes the Riesz identification.
Equivalently, \(\Phi^\dagger\) is characterized by the adjointness relation
\[
g\!\big(\Phi(a_1,\dots,a_n)\big)
=
\big[\Phi^\dagger(g,f_1,\dots,f_{n-1})\big](a_n),
\]
with \(a_j\) paired against \(f_j\).

By construction, the unit \(\eta\) and counit \(\varepsilon\) satisfy the
Z-shaped coherence relations.
These identities reduce to the standard Stinespring relations expressing
that \(V_{\min}\) is an isometry and that the
\(\pi_{j,\min}\) are compatible $*$-representations.
Hence \((\Phi^\dagger,\eta,\varepsilon)\) is a Z-compatible \(n\)-adjoint
realized on the minimal dilation.

\medskip
\noindent\textbf{(2) From a Z-compatible \(n\)-adjoint to a minimal dilation.}

Conversely, let
\(
(\Phi^\dagger,\eta,\varepsilon)
\)
be an \(n\)-adjoint satisfying the Z-shaped coherence relations.
Define a Hilbert space
\[
K := \overline{\mathrm{span}}\{\eta(x_n)\mid x_n\in A_n\},
\]
with inner product induced via the Riesz identification.
Define representations by
\[
\big(
\pi_{1}(a_1)\otimes\cdots\otimes\pi_{n-1}(a_{n-1})
\big)\eta(x_n)
:=
\eta\!\big(
\Phi^\dagger(\cdot,f_{a_1},\dots,f_{a_{n-1}})(x_n)
\big),
\]
and define an isometry
\[
V(\eta(x_n)):=\varepsilon(\eta(x_n))\in B.
\]

The Z-shaped coherence identities ensure that these assignments are
well-defined, extend by linearity, and yield a $*$-representation and
isometry.
Thus \((K,\{\pi_j\},V)\) is a Stinespring-type dilation of a completely
positive multimorphism, which coincides with \(\Phi\) by construction.
This reproduces exactly the constructive direction of
Definition~\ref{def:constructive-minimal-dilation}.

By Proposition~\ref{prop:minimal-uniqueness}, the resulting dilation is
minimal and unique up to unitary equivalence.

\medskip
\noindent\textbf{(3) Mutual inverses.}

Starting from \(\Phi\) and applying the construction in (1) yields the
minimal dilation
\(
(K_{\min},\{\pi_{j,\min}\},V_{\min})
\)
together with a Z-compatible \(n\)-adjoint.
Applying the construction in (2) to this \(n\)-adjoint recovers a dilation
unitarily equivalent to the original minimal one.
Conversely, starting from a Z-compatible \(n\)-adjoint and applying (2)
followed by (1) recovers the same adjoint data.

Therefore multilinear completely positive multimorphisms are in canonical
bijective correspondence with Z-compatible \(n\)-adjoints realized on
minimal Stinespring dilations, uniquely up to unitary equivalence.
\end{proof}

\section{Synergy Monad and Monadicity Theorem}\label{sec:Synergy Monad and Monadicity Theorem}

In this section, we will introduce several definitions regarding synergy monad, completely positive trace-preserving (CPTP) category before establishing monoidal representation of quantum processes by Theorem~\ref{thm:monoidal-monadic-representation}. 

\subsection{Core Framework}

In this subsection, we introduce the fundamental definitions and constructions needed to state Theorem~\ref{thm:monoidal-monadic-representation}. These include the categories of quantum processes, quantum circuits built from an operad, and the Synergy Monad $\mathbf{T_S}$, which together provide the categorical framework for composable quantum operations.

\begin{definition}[Quantum Interaction Operad $\mathbf{S}$]
\label{def:quantum-interaction-operad}

The \emph{Quantum Interaction Operad} $\mathbf{S}$ is a symmetric operad in the category $\mathbf{Set}$ whose operations are formal symbols for quantum processes with multiple inputs and one output.

\textbf{1. Core structure}

For each $n \geq 0$, $S(n)$ is a set (possibly infinite) of \emph{formal operation symbols} of arity $n$. These symbols satisfy the standard data and axioms of a symmetric operad:

\begin{itemize}
    \item \textbf{Operadic composition}: For $f \in S(m)$ and $g_1 \in S(n_1),\dots,g_m \in S(n_m)$, there is a composite symbol
    \[
    \gamma(f;g_1,\dots,g_m)\in S(n_1+\cdots+n_m)
    \]
    satisfying the usual associativity law.
    
    \item \textbf{Identity}: A distinguished symbol $\mathrm{id}\in S(1)$ acting as a unit for composition.
    
    \item \textbf{Symmetric action}: For each $\sigma\in\Sigma_n$, a map $S(n)\to S(n)$, written $f\mapsto\sigma\cdot f$, compatible with composition in the sense of symmetric operads.
\end{itemize}

\textbf{2. Generating symbols (illustrative)}

While $S(n)$ can be any sets satisfying the above, for quantum information purposes one often considers an operad generated by symbols such as:

\begin{itemize}
    \item \textbf{Nullary ($S(0)$)}: $\mathsf{prep}_\psi$, $\mathsf{prep}_\rho$ (state preparations).
    
    \item \textbf{Unary ($S(1)$)}: $\mathsf{id}$, $\mathsf{U}$ (unitary), $\mathsf{M}_i$ (measurement branch), $\mathsf{noise}$ (decoherence).
    
    \item \textbf{Binary ($S(2)$)}: $\mathsf{CNOT}$, $\mathsf{SWAP}$, $\mathsf{tr}_B$ (partial trace), $\mathsf{bell}$ (Bell-state creation).
    
    \item \textbf{$n$-ary ($n\ge 3$)}: $\mathsf{TOFF}$, $\mathsf{QFT}_n$, $\mathsf{multi\_ent}$ (multi-party entanglement).
\end{itemize}

The operad is then the free symmetric operad on these generators; no equations between composites are imposed at this stage.

\textbf{3. Semantic interpretation}

A \emph{quantum interpretation} of $\mathbf{S}$ consists of:

\begin{itemize}
    \item A finite-dimensional Hilbert space $H$,
    
    \item For each $f \in S(n)$, a completely positive (not necessarily trace-preserving) linear map
    \[
    \langle\!\langle f \rangle\!\rangle : \mathcal{B}(H)^{\otimes n} \longrightarrow \mathcal{B}(H),
    \]
    
    \item Such that:
    \begin{align*}
        \langle\!\langle \gamma(f;g_1,\dots,g_m) \rangle\!\rangle &= 
        \langle\!\langle f \rangle\!\rangle \circ \bigl( \langle\!\langle g_1 \rangle\!\rangle \otimes \cdots \otimes \langle\!\langle g_m \rangle\!\rangle \bigr), \\
        \langle\!\langle \mathrm{id} \rangle\!\rangle &= \mathrm{id}_{\mathcal{B}(H)}, \\
        \langle\!\langle \sigma \cdot f \rangle\!\rangle (X_1 \otimes \cdots \otimes X_n) &= 
        \langle\!\langle f \rangle\!\rangle (X_{\sigma^{-1}(1)} \otimes \cdots \otimes X_{\sigma^{-1}(n)}).
    \end{align*}
\end{itemize}

Trace preservation, unitary equivalences, and circuit identities are properties of a particular interpretation, not of the symbols themselves.

\textbf{4. Role in the monadicity theorem}

This syntactic operad $\mathbf{S}$ serves as input for the standard operadic monad construction. The associated \emph{Synergy Monad} $T_{\mathbf{S}}$ is defined on the category $\mathbf{Vec}_{\mathbb{C}}$ as:
\[
T_{\mathbf{S}}(V) = \bigoplus_{n \geq 0} \mathbb{C}[S(n)] \otimes_{\Sigma_n} V^{\otimes n},
\]
where $\mathbb{C}[S(n)]$ is the free complex vector space on $S(n)$, and
$\otimes_{\Sigma_n}$ denotes coinvariants with respect to the diagonal
symmetric group action.

The algebras of $T_{\mathbf{S}}$ correspond to systems equipped with a
coherent realization of all interaction templates in $\mathbf{S}$. 
Given an interpretation $\mathcal{I}: \mathbf{S} \to \mathbf{CPTP}$, we obtain
a $T_{\mathbf{S}}$-algebra structure on $\mathcal{B}(H)$ by:
\[
\alpha_{\mathcal{I}}([s; X_1, \dots, X_n]) = \mathcal{I}(s)(X_1 \otimes \cdots \otimes X_n).
\]

The monadicity theorem then establishes an equivalence between the category
of quantum processes and a suitable category of $T_{\mathbf{S}}$-algebras.
\end{definition}

\begin{remark}[Syntactic vs.\ Semantic Forms of the Synergy Operad $\mathbf{S}$]
The symbol $\mathbf{S}$ is used in this work in two closely related but conceptually
distinct senses.

\medskip
\noindent
\textnormal{(i) \emph{Syntactic operad.}}
The \emph{Quantum Interaction Operad} $\mathbf{S}$ (Definition~\ref{def:quantum-interaction-operad})
is a symmetric operad in $\mathbf{Set}$ whose elements are formal symbols representing
quantum interaction templates.  At this level, $\mathbf{S}$ is purely syntactic: it
encodes only arities, permutations, and operadic composition, and imposes no equations
beyond the operad axioms.  Semantic properties such as complete positivity, trace
preservation, or circuit identities arise only after choosing a concrete
interpretation.

\medskip
\noindent
\textnormal{(ii) \emph{Semantic synergy operad.}}
The synergy operad introduced in Section~\ref{sec:Synergy Operad Review} enriches this
syntactic structure with Hilbert-space typing, feedback, control, and explicit
coherence relations.  In particular, it incorporates analytic well-posedness conditions
and admits symmetric monoidal representations in $\mathbf{HilbMult}$, thereby providing
a compositional semantics for quantum processes.

\medskip
\noindent
\textnormal{Relationship.}
The syntactic operad $\mathbf{S}$ serves as the free, presentation-level input for the
operadic monad construction, yielding the synergy monad $T_{\mathbf{S}}$.  The semantic
synergy operad can be viewed as a structured realization—equivalently, a semantic
quotient and enrichment—of this free operad, obtained by imposing coherence relations
and fixing a class of admissible representations.  In this sense, the semantic operad
$\mathbf{S}$ is a specialized incarnation of the syntactic Quantum Interaction Operad,
tailored to support monadicity and Stinespring-type constructions.
\end{remark}

\begin{definition}[Categories $\mathbf{FdCStar}_{\mathrm{CP}}$ and $\mathbf{FdCStar}_{\mathrm{CPTP}}$]
\label{def:FdCStar-categories}

We define two closely related symmetric monoidal categories of finite-dimensional $C^*$-algebras and completely positive maps, which serve as the semantic target categories for quantum process interpretations.

\subsection*{1. The Category $\mathbf{FdCStar}_{\mathrm{CP}}$}

The category $\mathbf{FdCStar}_{\mathrm{CP}}$ has:

\begin{itemize}
    \item \textbf{Objects}: Finite-dimensional $C^*$-algebras $A, B, C, \dots$
    By the structure theorem for finite-dimensional $C^*$-algebras, every such algebra is $*$-isomorphic to a finite direct sum of full matrix algebras:
    \[
        A \cong \bigoplus_{i=1}^k M_{n_i}(\mathbb{C}).
    \]
    
    \item \textbf{Morphisms}: For objects $A$ and $B$, the set $\mathbf{FdCStar}_{\mathrm{CP}}(A,B)$ consists of \emph{completely positive linear maps} $\Phi: A \to B$.
    
    A linear map $\Phi: A \to B$ is \emph{completely positive} if for every $n \ge 1$, the amplified map
    \[
        \Phi \otimes \mathrm{id}_{M_n(\mathbb{C})}: A \otimes M_n(\mathbb{C}) \to B \otimes M_n(\mathbb{C})
    \]
    sends positive elements to positive elements.
    
    \item \textbf{Composition}: Ordinary composition of linear maps. The composite of completely positive maps is completely positive.
    
    \item \textbf{Identities}: $\mathrm{id}_A: A \to A$, the identity linear map.
    
    \item \textbf{Monoidal structure}: The spatial (minimal) $C^*$-tensor product $\otimes$ with unit $\mathbb{C}$. For $\Phi: A \to A'$ and $\Psi: B \to B'$, define
    \[
        (\Phi \otimes \Psi)(a \otimes b) = \Phi(a) \otimes \Psi(b)
    \]
    extended linearly.
\end{itemize}

\subsection*{2. The Category $\mathbf{FdCStar}_{\mathrm{CPTP}}$}

The category $\mathbf{FdCStar}_{\mathrm{CPTP}}$ has the same objects and monoidal structure as $\mathbf{FdCStar}_{\mathrm{CP}}$, but morphisms are restricted to \emph{trace-preserving} completely positive maps:

\begin{itemize}
    \item \textbf{Morphisms}: $\Phi: A \to B$ must satisfy both:
    \begin{enumerate}
        \item Complete positivity (as above)
        \item \emph{Trace preservation}: For all $a \in A$,
        \[
            \mathrm{tr}_B(\Phi(a)) = \mathrm{tr}_A(a)
        \]
    \end{enumerate}
    where $\mathrm{tr}_A$ and $\mathrm{tr}_B$ denote the canonical normalized traces. For $A \cong \bigoplus_i M_{n_i}(\mathbb{C})$, the trace is:
    \[
        \mathrm{tr}_A\left(\bigoplus_i a_i\right) = \sum_{i=1}^k \frac{n_i}{N} \mathrm{tr}(a_i), \quad N = \sum_{i=1}^k n_i
    \]
    with $\mathrm{tr}$ the usual matrix trace on each summand.
    
    \item \textbf{Composition and identities}: As in $\mathbf{FdCStar}_{\mathrm{CP}}$, but note that composites and identities automatically preserve traces when the constituent maps do.
\end{itemize}

\subsection*{3. Relationship Between the Categories}

There is a faithful symmetric monoidal functor:
\[
    I: \mathbf{FdCStar}_{\mathrm{CPTP}} \hookrightarrow \mathbf{FdCStar}_{\mathrm{CP}}
\]
which is the identity on objects and includes each CPTP map as a CP map. This inclusion is full but not essentially surjective on morphisms.

\subsection*{4. Connection to Vector Spaces}

There is a forgetful functor
\[
    U: \mathbf{FdCStar}_{\mathrm{CP}} \longrightarrow \mathbf{Vec}_{\mathbb{C}}
\]
that sends each $C^*$-algebra $A$ to its underlying vector space and each completely positive map to its underlying linear map. This functor is faithful and symmetric monoidal.

The synergy monad $T_{\mathbf{S}}$ is defined on $\mathbf{Vec}_{\mathbb{C}}$, so we can transport structures between these categories via $U$. In particular:
\begin{itemize}
    \item An interpretation $\mathcal{I}: \mathbf{S} \to \mathbf{CPTP}$ gives, for each $A \in \mathbf{FdCStar}_{\mathrm{CPTP}}$, a $T_{\mathbf{S}}$-algebra structure on $U(A)$.
    \item The algebra map $\alpha_\mathcal{I}: T_{\mathbf{S}}(U(A)) \to U(A)$ is actually a completely positive map $T_{\mathbf{S}}(U(A)) \to A$ when we consider the $C^*$-algebra structure on the codomain.
\end{itemize}

\subsection*{5. Role in the Monadicity Framework}

These categories serve as:
\begin{itemize}
    \item \textbf{Semantic target}: The physical category of quantum processes is $\mathbf{FdCStar}_{\mathrm{CPTP}}$.
    
    \item \textbf{Algebraic foundation}: Although $T_{\mathbf{S}}$ is defined on $\mathbf{Vec}_{\mathbb{C}}$, its algebras that come from interpretations $\mathcal{I}: \mathbf{S} \to \mathbf{CPTP}$ naturally live in $\mathbf{FdCStar}_{\mathrm{CPTP}}$ (via the forgetful functor $U$).
    
    \item \textbf{Equivalence target}: The monadicity theorem will establish an equivalence between a suitable category of $T_{\mathbf{S}}$-algebras (those arising from interpretations) and $\mathbf{FdCStar}_{\mathrm{CPTP}}$.
\end{itemize}
\end{definition}

According to these definitions given in this section, we have the following Proposition~\ref{prop:Hilb-Cstar-equivalence}.
\begin{proposition}[Equivalence of Hilbert-Space and $C^*$-Algebraic Formulations of Quantum Channels]
\label{prop:Hilb-Cstar-equivalence}

Let $\mathbf{CP}_{\mathrm{Hilb}}$ denote the category whose objects are
finite-dimensional Hilbert spaces and whose morphisms
$H \to K$ are completely positive linear maps
$\Phi : \mathcal{B}(H) \to \mathcal{B}(K)$.
Let $\mathbf{CPTP}_{\mathrm{Hilb}}$ be its full subcategory of
trace-preserving morphisms.

Let $\mathbf{FdCStar}_{\mathrm{CP}}^{\mathrm{simp}}$ denote the full
subcategory of $\mathbf{FdCStar}_{\mathrm{CP}}$ (Definition~\ref{def:FdCStar-categories})
whose objects are \emph{simple} algebras (i.e.\ full matrix algebras $M_n(\mathbb{C})$),
with completely positive linear maps as morphisms.
Let $\mathbf{FdCStar}_{\mathrm{CPTP}}^{\mathrm{simp}}$ be the corresponding
trace-preserving subcategory.

Then there are equivalences of strong symmetric monoidal categories
\[
\mathbf{CP}_{\mathrm{Hilb}}
\;\simeq\;
\mathbf{FdCStar}_{\mathrm{CP}}^{\mathrm{simp}},
\qquad
\mathbf{CPTP}_{\mathrm{Hilb}}
\;\simeq\;
\mathbf{FdCStar}_{\mathrm{CPTP}}^{\mathrm{simp}}.
\]
\end{proposition}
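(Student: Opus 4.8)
The plan is to exhibit an explicit comparison functor and to verify that it is a strong symmetric monoidal equivalence, then to check that it restricts cleanly to the trace-preserving subcategories. First I would define $F : \mathbf{CP}_{\mathrm{Hilb}} \to \mathbf{FdCStar}_{\mathrm{CP}}^{\mathrm{simp}}$ on objects by $H \mapsto \mathcal{B}(H)$ and on morphisms as the identity assignment: a morphism $H \to K$ in $\mathbf{CP}_{\mathrm{Hilb}}$ is by definition a completely positive map $\Phi : \mathcal{B}(H) \to \mathcal{B}(K)$, which is precisely a morphism $\mathcal{B}(H) \to \mathcal{B}(K)$ in $\mathbf{FdCStar}_{\mathrm{CP}}^{\mathrm{simp}}$. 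Since $\mathcal{B}(H) \cong M_{\dim H}(\mathbb{C})$ is a full matrix algebra, the assignment lands in the simple subcategory, and $F$ is manifestly functorial because composition and identities are the same underlying linear-map operations on both sides.

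Full faithfulness is then immediate: for fixed $H,K$ the two hom-sets $\mathbf{CP}_{\mathrm{Hilb}}(H,K)$ and $\mathbf{FdCStar}_{\mathrm{CP}}^{\mathrm{simp}}(\mathcal{B}(H),\mathcal{B}(K))$ are literally the same set of completely positive maps $\mathcal{B}(H) \to \mathcal{B}(K)$. For essential surjectivity I would invoke the structure theorem recalled in Definition~\ref{def:FdCStar-categories}: a simple finite-dimensional $C^*$-algebra is $*$-isomorphic to some $M_n(\mathbb{C}) \cong \mathcal{B}(\mathbb{C}^n)$, and any such $*$-isomorphism is in particular a completely positive isomorphism, so every object of $\mathbf{FdCStar}_{\mathrm{CP}}^{\mathrm{simp}}$ is isomorphic in that category to $F(\mathbb{C}^n)$. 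This establishes the equivalence of the underlying categories.

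Next I would upgrade $F$ to a strong symmetric monoidal functor. The structure isomorphisms come from the canonical identifications $\mathcal{B}(H \otimes K) \cong \mathcal{B}(H) \otimes \mathcal{B}(K)$ (equivalently $M_{mn}(\mathbb{C}) \cong M_m(\mathbb{C}) \otimes M_n(\mathbb{C})$) together with the unit identification $\mathcal{B}(\mathbb{C}) = \mathbb{C}$. In finite dimensions the spatial $C^*$-tensor product of Definition~\ref{def:FdCStar-categories} coincides with the algebraic tensor product, so these maps are genuine $*$-isomorphisms and no analytic subtlety arises. I would then verify naturality of these isomorphisms against completely positive maps, the associativity and unit coherence (pentagon and triangle) axioms, and compatibility with the symmetry, the latter being induced by the swap unitary $H \otimes K \cong K \otimes H$ whose conjugation action realizes the tensor-swap on the algebras. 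Since a strong symmetric monoidal functor that is an equivalence of underlying categories is automatically a symmetric monoidal equivalence, this yields the first claimed equivalence.

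Finally, for the trace-preserving statement I would show $F$ restricts appropriately. The key bookkeeping point is that the canonical normalized trace of Definition~\ref{def:FdCStar-categories}, evaluated on a simple algebra $M_n(\mathbb{C})$ (so $k=1$ and $N = n$), reduces to $\tfrac{n}{n}\,\mathrm{tr} = \mathrm{tr}$, i.e.\ the ordinary matrix trace on $\mathcal{B}(H)$. Hence the condition $\mathrm{tr}_B(\Phi(a)) = \mathrm{tr}_A(a)$ defining morphisms of $\mathbf{FdCStar}_{\mathrm{CPTP}}^{\mathrm{simp}}$ coincides exactly with the standard trace-preservation condition $\mathrm{Tr}(\Phi(a)) = \mathrm{Tr}(a)$ defining $\mathbf{CPTP}_{\mathrm{Hilb}}$. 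Therefore $F$ carries CPTP morphisms to CPTP morphisms and reflects them, so it restricts to a fully faithful functor $\mathbf{CPTP}_{\mathrm{Hilb}} \to \mathbf{FdCStar}_{\mathrm{CPTP}}^{\mathrm{simp}}$; essential surjectivity is inherited because the $*$-isomorphism $M_n(\mathbb{C}) \cong \mathcal{B}(\mathbb{C}^n)$ is unitarily implemented and hence trace-preserving, so it is already an isomorphism inside the CPTP subcategory. I expect the main obstacle to be organizational rather than conceptual: carefully checking the symmetric monoidal coherence axioms for the tensor identifications, and keeping the trace-normalization bookkeeping straight so that the two notions of trace preservation are genuinely identified on simple algebras.
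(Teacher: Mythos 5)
Your proposal is correct and follows essentially the same route as the paper: the same comparison functor $H \mapsto \mathcal{B}(H)$ acting as the identity on morphisms, the same canonical monoidal identifications $\mathcal{B}(H \otimes K) \cong \mathcal{B}(H) \otimes \mathcal{B}(K)$, and the same restriction argument for the CPTP case; the only difference is that you establish the equivalence via full faithfulness plus essential surjectivity, whereas the paper constructs an explicit quasi-inverse $G$ from chosen $*$-isomorphisms $\iota_A : A \to \mathcal{B}(H_A)$, a routine variant of the same argument. Incidentally, your trace bookkeeping (on a simple algebra the weighted trace of Definition~\ref{def:FdCStar-categories} reduces to the ordinary matrix trace, so the two trace-preservation conditions literally coincide) is actually more consistent with that definition than the paper's own final step, which instead invokes the normalized trace $\tfrac{1}{n}\mathrm{tr}$.
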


\begin{proof}

We prove the equivalence for completely positive maps; the trace-preserving
case follows by restriction.

\medskip
\noindent
\textbf{Step 1: Definition of the functors.}

Define a functor
\[
F : \mathbf{CP}_{\mathrm{Hilb}}
\longrightarrow
\mathbf{FdCStar}_{\mathrm{CP}}^{\mathrm{simp}}
\]
by
\begin{itemize}
    \item on objects: $F(H) := \mathcal{B}(H)$;
    \item on morphisms: for $\Phi : \mathcal{B}(H) \to \mathcal{B}(K)$,
    set $F(\Phi) := \Phi$.
\end{itemize}
Since $\mathcal{B}(H) \cong M_{\dim H}(\mathbb{C})$ for finite-dimensional $H$,
$F$ indeed lands in $\mathbf{FdCStar}_{\mathrm{CP}}^{\mathrm{simp}}$.

Conversely, define a functor
\[
G : \mathbf{FdCStar}_{\mathrm{CP}}^{\mathrm{simp}}
\longrightarrow
\mathbf{CP}_{\mathrm{Hilb}}
\]
as follows.
For each object $A \cong M_n(\mathbb{C})$, choose a Hilbert space
$H_A \cong \mathbb{C}^n$ and fix a $*$-isomorphism
\[
\iota_A : A \xrightarrow{\;\sim\;} \mathcal{B}(H_A).
\]
(Any two such choices are unitarily equivalent.)
For a morphism $\Psi : A \to B$, define
\[
G(\Psi) := \iota_B \circ \Psi \circ \iota_A^{-1}
: \mathcal{B}(H_A) \to \mathcal{B}(H_B).
\]
Since $\iota_A$ and $\iota_B$ are $*$-isomorphisms and $\Psi$ is completely
positive, $G(\Psi)$ is completely positive.

\medskip
\noindent
\textbf{Step 2: $F \circ G \cong \mathrm{id}$.}

For each object $A \in \mathbf{FdCStar}_{\mathrm{CP}}^{\mathrm{simp}}$,
\[
(F \circ G)(A) = \mathcal{B}(H_A),
\]
and the chosen $*$-isomorphism $\iota_A : A \to \mathcal{B}(H_A)$ provides a
natural isomorphism
\[
\iota : \mathrm{id}_{\mathbf{FdCStar}_{\mathrm{CP}}^{\mathrm{simp}}}
\;\Rightarrow\;
F \circ G.
\]
Naturality follows from: for any $\Psi : A \to B$,
\[
\iota_B \circ \Psi = (F \circ G)(\Psi) \circ \iota_A.
\]

\medskip
\noindent
\textbf{Step 3: $G \circ F \cong \mathrm{id}$.}

For any finite-dimensional Hilbert space $H$,
\[
(G \circ F)(H) = H_{\mathcal{B}(H)}.
\]
We may choose $H_{\mathcal{B}(H)} = H$ and $\iota_{\mathcal{B}(H)} = \mathrm{id}$,
since $\mathcal{B}(H)$ is already the algebra of operators on $H$.
With this choice, for any morphism
$\Phi : \mathcal{B}(H) \to \mathcal{B}(K)$ we have
\[
(G \circ F)(\Phi) = \Phi.
\]
Thus we have an equality $G \circ F = \mathrm{id}_{\mathbf{CP}_{\mathrm{Hilb}}}$,
which is trivially a natural isomorphism.

\medskip
\noindent
\textbf{Step 4: Monoidal structure.}

Both $F$ and $G$ are strong symmetric monoidal functors.
\begin{itemize}
    \item For $F$, the monoidal structure is given by the canonical
    $*$-isomorphisms
    \[
    \phi_{H,K} : \mathcal{B}(H \otimes K) \xrightarrow{\sim} \mathcal{B}(H) \otimes \mathcal{B}(K),
    \qquad
    \phi_0 : \mathcal{B}(\mathbb{C}) \xrightarrow{\sim} \mathbb{C},
    \]
    which are natural in $H$ and $K$.
    
    \item For $G$, we define monoidal structure maps by transporting along
    the isomorphisms $\iota_A$. For $A \cong M_m(\mathbb{C})$ and 
    $B \cong M_n(\mathbb{C})$, choose $H_A \cong \mathbb{C}^m$, $H_B \cong \mathbb{C}^n$,
    and define
    \[
    \psi_{A,B} : H_A \otimes H_B \xrightarrow{\sim} H_{A \otimes B}
    \]
    via the isomorphism $\mathbb{C}^m \otimes \mathbb{C}^n \cong \mathbb{C}^{mn}$.
    Then $\iota_{A \otimes B} \circ (\iota_A^{-1} \otimes \iota_B^{-1}) \circ \phi_{H_A,H_B}^{-1}$
    gives the required monoidal coherence.
\end{itemize}
The coherence axioms hold because all isomorphisms involved are canonical
up to unitary equivalence.

\medskip
\noindent
\textbf{Step 5: Trace-preserving case.}

Restricting to trace-preserving morphisms yields an equivalence
\[
\mathbf{CPTP}_{\mathrm{Hilb}}
\;\simeq\;
\mathbf{FdCStar}_{\mathrm{CPTP}}^{\mathrm{simp}}.
\]
Under the identifications $\iota_A$, the standard Hilbert-space trace
$\mathrm{tr}(\cdot)$ corresponds to the canonical normalized trace on
$M_n(\mathbb{C})$ given by $\frac{1}{n}\mathrm{tr}_{\mathrm{matrix}}(\cdot)$.
Since both functors preserve these traces, trace preservation is respected.
\end{proof}

\begin{remark}[Significance for the Framework]
\label{rem:equivalence-significance}

This equivalence is crucial for our framework because it shows we can work
interchangeably with:
\begin{enumerate}
    \item \textbf{Hilbert-space perspective}: Quantum systems as Hilbert spaces
    $H$, operations as CP maps $\mathcal{B}(H) \to \mathcal{B}(K)$.
    
    \item \textbf{C*-algebra perspective}: Quantum systems as matrix algebras
    $M_n(\mathbb{C})$, operations as CP maps between algebras.
\end{enumerate}

The C*-algebraic perspective is preferred for categorical and algebraic
constructions (operads, monads, algebras) because:
\begin{itemize}
    \item Morphisms act directly on algebras, not via the $\mathcal{B}(\cdot)$
    construction.
    
    \item The tensor product of C*-algebras is more natural for monoidal
    category theory than the tensor product of Hilbert spaces followed by
    $\mathcal{B}(\cdot)$.
    
    \item General finite-dimensional C*-algebras (direct sums of matrix algebras)
    can model classical information and superselection sectors, while the
    simple algebra case (Proposition~\ref{prop:Hilb-Cstar-equivalence}) covers
    pure quantum systems.
\end{itemize}

Thus, while we may intuitively think in terms of Hilbert spaces, all formal
definitions and proofs use the C*-algebraic formulation.
\end{remark}

\begin{remark}[On the notation $\mathbf{CPTP}_{\mathrm{Hilb}}$ vs.\ $\mathbf{CPTP}$]
\label{rem:CPTP-notations}

We distinguish two equivalent but conceptually different presentations of
the category of completely positive trace-preserving maps.

\begin{itemize}
    \item $\mathbf{CPTP}_{\mathrm{Hilb}}$ denotes the category whose objects are
    finite-dimensional Hilbert spaces $H$, and whose morphisms
    $H \to K$ are completely positive, trace-preserving maps
    $\Phi : \mathcal{B}(H) \to \mathcal{B}(K)$.

    \item $\mathbf{CPTP}$ denotes the category $\mathbf{FdCStar}_{\mathrm{CPTP}}$
    whose objects are finite-dimensional C$^*$-algebras
    $A,B,\dots$, and whose morphisms are completely positive,
    trace-preserving maps $A \to B$.
\end{itemize}

These two categories are equivalent as symmetric monoidal categories via the
assignment $H \mapsto \mathcal{B}(H)$, with morphisms acting identically on
completely positive maps.  However, the C$^*$-algebraic presentation
$\mathbf{CPTP}$ has the correct typing for operadic, monadic, and
algebraic constructions, since completely positive maps act naturally on
operator algebras rather than directly on Hilbert spaces.  For this reason,
all monadic and operadic semantics in this work are formulated in
$\mathbf{CPTP}$, while $\mathbf{CPTP}_{\mathrm{Hilb}}$ is used only as an
informal or presentation-level shorthand.
\end{remark}

\begin{definition}[Category $\mathbf{S}\text{-}\mathbf{Circuit}$ of Quantum Circuits]
\label{def:S-circuit-category}

Let $\mathbf{S}$ be a quantum interaction operad (Definition~\ref{def:quantum-interaction-operad}). 
The category $\mathbf{S}\text{-}\mathbf{Circuit}$ is the \emph{free symmetric monoidal category generated by $\mathbf{S}$}.

\textbf{1. Objects.}  
Objects are natural numbers $n \in \mathbb{N}$, representing $n$ parallel wires.  
Monoidal product: $n \otimes m := n + m$, unit: $0$.

\textbf{2. Generating morphisms.}  
For each $f \in S(n)$, a generating morphism $f: n \to 1$.  
Additionally: identities $\mathrm{id}_n: n \to n$, symmetries $\sigma: n \to n$ ($\sigma \in \Sigma_n$).

\textbf{3. General morphisms.}  
All morphisms are obtained from generators via:
\begin{itemize}
    \item Sequential composition: $n \xrightarrow{\alpha} m \xrightarrow{\beta} k$ gives $\beta \circ \alpha$
    \item Parallel composition: $\alpha \otimes \beta: n+n' \to m+m'$
    \item Symmetry: wire permutations
\end{itemize}
subject only to symmetric monoidal category axioms.

\textbf{4. Semantic interpretation}  
Given a \emph{semantic interpretation} $\mathcal{I}: \mathbf{S} \to \mathbf{CPTP}$ 
(assigning to each $f \in S(n)$ a CP map $\mathcal{I}(f): \mathcal{B}(H)^{\otimes n} \to \mathcal{B}(H)$), 
extend $\mathcal{I}$ to a strong symmetric monoidal functor
\[
\llbracket \cdot \rrbracket_{\mathcal{I}}: \mathbf{S}\text{-}\mathbf{Circuit} \to \mathbf{CPTP}
\]
by freeness: $\llbracket n \rrbracket_{\mathcal{I}} = \mathcal{B}(H)^{\otimes n}$, 
$\llbracket f \rrbracket_{\mathcal{I}} = \mathcal{I}(f)$, preserving composition and tensor.

\textbf{5. Remark on multiple outputs.}  
While $\mathbf{S}$ is single-output, morphisms $n \to m$ for $m \neq 1$ are built via:
\[
n \to m \quad \text{as} \quad n \xrightarrow{f_1 \otimes \cdots \otimes f_m} m
\]
where each $f_i: n_i \to 1$ with $\sum_i n_i = n$. 
For $m=0$, use the unique morphism $n \to 0$ (empty diagram).

\end{definition}

\begin{definition}[Synergy Monad $T_{\mathbf{S}}$]
\label{def:synergy-monad}

Let $\mathbf{S}$ be a (single-output) symmetric operad in $\mathbf{Set}$.
Let $\mathbf{Vec}_{\mathbb{C}}$ denote the category of complex vector spaces and linear maps.

The \emph{Synergy Monad} associated to $\mathbf{S}$ is the endofunctor
\[
T_{\mathbf{S}} : \mathbf{Vec}_{\mathbb{C}} \longrightarrow \mathbf{Vec}_{\mathbb{C}}
\]
defined as follows.

\medskip
\noindent
\textbf{1. Object part.}

For each vector space $V$, define
\[
T_{\mathbf{S}}(V)
\;:=\;
\bigoplus_{n \ge 0}
\; \mathbb{C}[S(n)] \;\otimes_{\Sigma_n}\; V^{\otimes n},
\]
where:
\begin{itemize}
    \item $\mathbb{C}[S(n)]$ is the free complex vector space on the set $S(n)$,
    \item $V^{\otimes n} = V \otimes \cdots \otimes V$ ($n$ factors),
    \item $\Sigma_n$ acts diagonally on $\mathbb{C}[S(n)] \otimes V^{\otimes n}$ by
          $\sigma \cdot (s \otimes v_1 \otimes \cdots \otimes v_n) = 
          (\sigma \cdot s) \otimes v_{\sigma^{-1}(1)} \otimes \cdots \otimes v_{\sigma^{-1}(n)}$,
    \item $\otimes_{\Sigma_n}$ denotes the vector space coinvariants with respect to this action,
    \item $V^{\otimes 0} := \mathbb{C}$.
\end{itemize}

Thus $T_{\mathbf{S}}(V)$ is the vector space of formal linear combinations of expressions
$[s; v_1, \dots, v_n]$, representing the application of $n$-ary operation $s \in S(n)$
to inputs $v_1, \dots, v_n \in V$, modulo permutation symmetry.

\medskip
\noindent
\textbf{2. Morphism part.}

For a linear map $f : V \to W$, define
\[
T_{\mathbf{S}}(f) : T_{\mathbf{S}}(V) \longrightarrow T_{\mathbf{S}}(W)
\]
on generators by
\[
T_{\mathbf{S}}(f)\big([s; v_1, \dots, v_n]\big) 
:= [s; f(v_1), \dots, f(v_n)],
\]
and extend linearly. Equivalently,
\[
T_{\mathbf{S}}(f) = \bigoplus_{n \ge 0} 
\left(\mathrm{id}_{\mathbb{C}[S(n)]} \otimes_{\Sigma_n} f^{\otimes n}\right),
\]
where $\mathrm{id}_{\mathbb{C}[S(n)]}$ is the identity map on the free vector space $\mathbb{C}[S(n)]$.

\medskip
\noindent
\textbf{3. Monad unit.}

The unit $\eta_V : V \to T_{\mathbf{S}}(V)$ is induced by the operadic
unit element $\mathbbm{1} \in S(1)$:
\[
\eta_V(v) := [\mathbbm{1}; v] \in \mathbb{C}[S(1)] \otimes_{\Sigma_1} V
\subset T_{\mathbf{S}}(V).
\]

\medskip
\noindent
\textbf{4. Monad multiplication.}

The multiplication
\[
\mu_V : T_{\mathbf{S}}(T_{\mathbf{S}}(V)) \longrightarrow T_{\mathbf{S}}(V)
\]
is induced by operadic composition. On a generator of the form
\[
[s; [t_1; \vec{v}_1], \dots, [t_k; \vec{v}_k]],
\]
where $s \in S(k)$, $t_i \in S(n_i)$, and $\vec{v}_i \in V^{\otimes n_i}$, define
\[
\mu_V\big([s; [t_1; \vec{v}_1], \dots, [t_k; \vec{v}_k]]\big)
:= [\gamma(s; t_1, \dots, t_k); \vec{v}_1, \dots, \vec{v}_k],
\]
where $\gamma(s; t_1, \dots, t_k) \in S(n_1 + \cdots + n_k)$ is the operadic composite.
This map is extended linearly to all of $T_{\mathbf{S}}(T_{\mathbf{S}}(V))$.

\medskip
\noindent
\textbf{5. Monad laws.}

The associativity and unitality of $\mu$ and $\eta$ follow directly from
the associativity and unit axioms of the operad $\mathbf{S}$.
Hence $(T_{\mathbf{S}}, \eta, \mu)$ is a monad on $\mathbf{Vec}_{\mathbb{C}}$.

\medskip
\noindent
\textbf{6. Interpretation for quantum processes.}

For applications to quantum information, we consider the restriction of $T_{\mathbf{S}}$
to the full subcategory of $\mathbf{Vec}_{\mathbb{C}}$ whose objects are of the form
$\mathcal{B}(H)$ for finite-dimensional Hilbert spaces $H$. Given a quantum interpretation
\[
\mathcal{I} : \mathbf{S} \longrightarrow \mathbf{CPTP},
\]
the induced $\mathbf{T_S}$-algebra structure on $\mathcal{B}(H)$ is the linear map
\[
\alpha_{\mathcal{I}} : T_{\mathbf{S}}(\mathcal{B}(H)) \longrightarrow \mathcal{B}(H)
\]
defined on generators by
\[
\alpha_{\mathcal{I}}\big([s; X_1, \dots, X_n]\big) := \mathcal{I}(s)(X_1 \otimes \cdots \otimes X_n).
\]

This makes $(\mathcal{B}(H), \alpha_{\mathcal{I}})$ a $\mathbf{T_S}$-algebra in
$\mathbf{Vec}_{\mathbb{C}}$, and when $\mathcal{I}(s)$ are completely positive,
$\alpha_{\mathcal{I}}$ inherits positivity properties compatible with the
$\mathbf{CPTP}$ structure.

\end{definition}

\begin{remark}[Intuition behind the Synergy Monad]
\label{rem:synergy-monad-intuition}

The synergy monad $T_{\mathbf{S}}$ constructed in Definition~\ref{def:synergy-monad}
encapsulates the formal algebraic structure of composable quantum interactions.
Let us unpack its conceptual meaning.

The operad $\mathbf{S}$ consists of \emph{primitive interaction templates} for 
quantum processes: elementary gates ($\mathsf{CNOT}$, $\mathsf{H}$), measurements 
($\mathsf{M}_i$), state preparations ($\mathsf{prep}_\psi$), noise channels, etc. 
Each symbol $s \in S(n)$ has a specified arity $n$ (number of inputs) and can be 
composed with other symbols according to the operad's composition rules.

The monad $T_{\mathbf{S}}$ freely generates all possible \emph{formal composites}
of these interactions applied to elements of a vector space $V$. Concretely, an element
\[
[s; v_1, \dots, v_n] \in T_{\mathbf{S}}(V)
\]
represents the formal application of interaction $s \in S(n)$ to inputs 
$v_1, \dots, v_n \in V$. The direct sum
\[
T_{\mathbf{S}}(V) = \bigoplus_{n \ge 0} \mathbb{C}[S(n)] \otimes_{\Sigma_n} V^{\otimes n}
\]
collects all such formal expressions, modulo the permutation symmetry imposed by
the coinvariants $\otimes_{\Sigma_n}$.

Two complementary perspectives clarify the role of $T_{\mathbf{S}}$:

\begin{enumerate}
    \item \textbf{Syntactic viewpoint:} $T_{\mathbf{S}}$ is the monad whose Kleisli 
    category is equivalent to the free symmetric monoidal category 
    $\mathbf{S}\text{-}\mathbf{Circuit}$~\cite{mulry1993lifting}. While $\mathbf{S}\text{-}\mathbf{Circuit}$ 
    describes wiring diagrams (string diagrams), $T_{\mathbf{S}}$ describes the same 
    compositions algebraically as formal linear combinations.

    \item \textbf{Algebraic viewpoint:} $T_{\mathbf{S}}$ is the \emph{free algebra monad} 
    for the operad $\mathbf{S}$. A $T_{\mathbf{S}}$-algebra structure on a vector space 
    $V$ is precisely an action of $\mathbf{S}$ on $V$, i.e., a way to interpret each 
    formal expression $[s; v_1, \dots, v_n]$ as an actual element of $V$.
\end{enumerate}

The monad structure reflects this algebra of composition:
\begin{itemize}
    \item The unit $\eta_V : V \to T_{\mathbf{S}}(V)$ embeds a plain element $v \in V$ 
    as the trivial interaction $[\mathbbm{1}; v]$, where $\mathbbm{1} \in S(1)$ is the 
    operadic identity (do nothing).
    
    \item The multiplication $\mu_V : T_{\mathbf{S}}(T_{\mathbf{S}}(V)) \to T_{\mathbf{S}}(V)$ 
    flattens nested interactions. For example,
    \[
    \mu_V\big([s; [t_1; \vec{v}_1], [t_2; \vec{v}_2]]\big) 
    = [\gamma(s; t_1, t_2); \vec{v}_1, \vec{v}_2],
    \]
    where $\gamma(s; t_1, t_2)$ is the operadic composite. This corresponds to substituting 
    the outputs of $t_1$ and $t_2$ as inputs to $s$.
\end{itemize}

\noindent\textbf{Crucial distinction: syntax vs. semantics.}
The synergy monad itself is purely syntactic: it describes \emph{how interactions can be 
combined formally}, not what they \emph{do physically}. Physical meaning enters only through 
an \emph{interpretation functor}
\[
\mathcal{I} : \mathbf{S} \longrightarrow \mathbf{CPTP},
\]
which assigns to each $s \in S(n)$ a concrete completely positive map 
$\mathcal{I}(s) : \mathcal{B}(H)^{\otimes n} \to \mathcal{B}(H)$. This interpretation 
then extends uniquely to a $T_{\mathbf{S}}$-algebra structure
\[
\alpha_{\mathcal{I}} : T_{\mathbf{S}}(\mathcal{B}(H)) \longrightarrow \mathcal{B}(H),
\]
evaluating formal composites as actual quantum processes.

Thus, $T_{\mathbf{S}}$ serves as a \emph{universal syntactic container} for quantum 
interaction patterns. The monadicity theorem (Theorem~\ref{thm:monoidal-monadic-representation}) will show 
that the category of quantum processes $\mathbf{CPTP}$ is equivalent to the category 
of $T_{\mathbf{S}}$-algebras for a suitably chosen $\mathbf{S}$, establishing a precise 
algebraic characterization of composable quantum operations.
\end{remark}

\subsection{Monadicity Correspondence}

This section establishes the core equivalence between quantum processes and algebraic structures. We define how every CPTP map induces a $\mathbf{T_S}$-algebra structure (Definition~\ref{def:algebra-from-cptp}), and conversely, how every such algebra yields a CPTP map (Definition~\ref{def:cptp-from-algebra}). Finally, Definition~\ref{def:morphism-correspondence} shows this correspondence extends to morphisms, completing the categorical equivalence $\mathbf{CPTP} \cong \mathbf{T_S}\text{-}\mathbf{Alg}$ that is the essence of the Monadicity Theorem.

\begin{definition}[$\mathbf{T_S}$-Algebra Induced by an Operadic Interpretation]
\label{def:algebra-from-cptp}

Let $\mathbf{S}$ be a quantum interaction operad and let 
\[
\mathcal{I} : \mathbf{S} \longrightarrow \mathbf{CPTP}
\]
be a symmetric monoidal functor giving a semantic interpretation of $\mathbf{S}$. 
For $A \in \mathbf{FdCStar}_{\mathrm{CPTP}}$, we define a $\mathbf{T_S}$-algebra 
structure on $A$ as the linear map
\[
\alpha_\mathcal{I} : T_{\mathbf{S}}(A) \longrightarrow A
\]
where $T_{\mathbf{S}}(A)$ is computed by viewing $A$ as an object of 
$\mathbf{Vec}_{\mathbb{C}}$ (forgetting the $C^*$-algebra structure temporarily).

The map $\alpha_\mathcal{I}$ is defined on generators by
\[
\alpha_\mathcal{I}\big( [s; a_1, \dots, a_n] \big) := \mathcal{I}(s)(a_1 \otimes \cdots \otimes a_n),
\qquad s \in S(n),\; a_i \in A,
\]
and extended linearly to all of $T_{\mathbf{S}}(A)$. Here $[s; a_1, \dots, a_n]$ denotes 
the equivalence class in $\mathbb{C}[S(n)] \otimes_{\Sigma_n} A^{\otimes n}$.

\medskip
\noindent
\textbf{Properties:}
\begin{enumerate}[label=(\alph*)]
    \item \textbf{Linearity:} $\alpha_\mathcal{I}$ is linear by construction.
    
    \item \textbf{Complete positivity:} Since each $\mathcal{I}(s)$ is completely 
    positive and $\alpha_\mathcal{I}$ is defined as a linear combination of such 
    evaluations, $\alpha_\mathcal{I}$ is completely positive as a map from the 
    vector space $T_{\mathbf{S}}(A)$ to the $C^*$-algebra $A$.
    
    \item \textbf{Trace preservation:} If each $\mathcal{I}(s)$ is trace-preserving, 
    then $\alpha_\mathcal{I}$ preserves the canonical trace on formal expressions 
    when restricted to appropriate subspaces.
    
    \item \textbf{Unit Law:} The monad unit $\eta_A : A \to T_{\mathbf{S}}(A)$ satisfies
    \[
    \alpha_\mathcal{I} \circ \eta_A = \mathrm{id}_A
    \]
    because $\mathcal{I}$ sends the operadic unit $\mathbbm{1} \in S(1)$ to the 
    identity map on $A$, and $\eta_A(a) = [\mathbbm{1}; a]$.
    
    \item \textbf{Multiplication Law:} For the monad multiplication 
    $\mu_A : T_{\mathbf{S}}(T_{\mathbf{S}}(A)) \to T_{\mathbf{S}}(A)$, we have
    \[
    \alpha_\mathcal{I} \circ \mu_A = \alpha_\mathcal{I} \circ T_{\mathbf{S}}(\alpha_\mathcal{I}),
    \]
    which follows from the compatibility of $\mathcal{I}$ with operadic composition.
\end{enumerate}

Hence $(A, \alpha_\mathcal{I})$ is a $\mathbf{T_S}$-algebra. When we consider the 
forgetful functor $U: \mathbf{FdCStar}_{\mathrm{CPTP}} \to \mathbf{Vec}_{\mathbb{C}}$,
$(U(A), \alpha_\mathcal{I})$ is a $\mathbf{T_S}$-algebra in $\mathbf{Vec}_{\mathbb{C}}$.
\end{definition}

\begin{remark}[Intuition Behind the $\mathbf{T_S}$-Algebra Construction]
\label{rem:algebra-from-cptp-intuition}

The construction bridges three levels of structure:

\begin{enumerate}
    \item \textbf{Syntactic level:} The operad $\mathbf{S}$ provides formal symbols 
    for quantum operations with prescribed arities and composition rules.
    
    \item \textbf{Algebraic level:} The monad $T_{\mathbf{S}}$ freely generates all 
    possible formal composites of $\mathbf{S}$-operations applied to a vector space.
    An element $[s; a_1, \dots, a_n] \in T_{\mathbf{S}}(A)$ represents ``apply 
    operation $s$ to inputs $a_1, \dots, a_n$.''
    
    \item \textbf{Semantic level:} The interpretation $\mathcal{I}$ assigns concrete 
    quantum channels to the symbols in $\mathbf{S}$. The algebra map $\alpha_\mathcal{I}$ 
    \emph{evaluates} formal composites as actual quantum operations:
    \[
    \alpha_\mathcal{I}([s; a_1, \dots, a_n]) = \mathcal{I}(s)(a_1 \otimes \cdots \otimes a_n).
    \]
\end{enumerate}

The monad laws ensure this evaluation behaves coherently:
\begin{itemize}
    \item The unit law says evaluating a trivial operation $[\mathbbm{1}; a]$ returns $a$.
    \item The multiplication law says evaluating a nested operation 
    $[s; [t_1; \vec{a}_1], [t_2; \vec{a}_2]]$ is the same as first evaluating the 
    inner operations and then applying $s$ to the results.
\end{itemize}

This construction formalizes how any quantum system equipped with a collection of 
basic operations (gates, measurements, etc.) naturally carries the structure of a 
$\mathbf{T_S}$-algebra. The promised monadic equivalence
\[
\mathbf{CPTP} \;\simeq\; \mathbf{T_S}\text{-}\mathbf{Alg}
\]
will show that this correspondence is bijective: every quantum channel arises from 
such an algebra structure, and every such algebra structure corresponds to a 
quantum system with specified basic operations. Monadicity Theorem~\ref{thm:monoidal-monadic-representation} will be discussed later. 
\end{remark}

%

\begin{definition}[CPTP Map from a $\mathbf{T_S}$-Algebra]
\label{def:cptp-from-algebra}

Let $(A, \alpha: T_{\mathbf{S}}(A) \to A)$ be a $\mathbf{T_S}$-algebra in 
$\mathbf{Vec}_{\mathbb{C}}$, where $A$ is also a finite-dimensional $C^*$-algebra.
We define the corresponding completely positive trace-preserving (CPTP) map 
$\Phi_\alpha: A \to A$ as the restriction of $\alpha$ to a special subspace.

\medskip
\noindent\textbf{1. Definition via the Unit Embedding.} 
The CPTP map $\Phi_\alpha: A \to A$ is defined as the composite:
\[
\Phi_\alpha := \alpha \circ \eta_A : A \xrightarrow{\eta_A} T_{\mathbf{S}}(A) \xrightarrow{\alpha} A,
\]
where $\eta_A: A \to T_{\mathbf{S}}(A)$ is the monad unit. Concretely, for any $a \in A$,
\[
\Phi_\alpha(a) = \alpha\big([\mathbbm{1}; a]\big),
\]
where $\mathbbm{1} \in S(1)$ is the operadic identity and $[\mathbbm{1}; a]$ denotes 
the equivalence class in $\mathbb{C}[S(1)] \otimes_{\Sigma_1} A$.

\medskip
\noindent\textbf{2. Complete Positivity.} 
$\Phi_\alpha$ is completely positive because:
\begin{itemize}
    \item The monad unit $\eta_A$ is a linear map that preserves positivity: 
    if $a \in A$ is positive, then $[\mathbbm{1}; a]$ is a positive element 
    in the vector space $T_{\mathbf{S}}(A)$ (in an appropriate sense).
    \item The algebra map $\alpha: T_{\mathbf{S}}(A) \to A$ is completely positive 
    by the construction in Definition~\ref{def:algebra-from-cptp}, since it arises 
    from an interpretation $\mathcal{I}: \mathbf{S} \to \mathbf{CPTP}$.
    \item The composition of completely positive maps is completely positive.
\end{itemize}

\medskip
\noindent\textbf{3. Trace Preservation.} 
$\Phi_\alpha$ preserves the canonical normalized trace on $A$ because:
\begin{itemize}
    \item The algebra map $\alpha$ satisfies the unit law: $\alpha \circ \eta_A = \mathrm{id}_A$.
    \item However, this would give $\Phi_\alpha = \mathrm{id}_A$, which suggests we need 
    a different construction (see Remark~\ref{rem:cp-from-algebra-nuance}).
\end{itemize}

\medskip
\noindent\textbf{4. Linearity.} 
$\Phi_\alpha$ is linear since both $\eta_A$ and $\alpha$ are linear maps.

\medskip
\noindent\textbf{5. Alternative Construction (for non-identity maps).}
For a general CPTP map, we use the algebra structure more fully. Given $a \in A$, 
we can apply any operation $s \in S(1)$ via:
\[
\Phi_\alpha^{(s)}(a) := \alpha\big([s; a]\big).
\]
More generally, for operations with multiple inputs, we define maps 
$\Phi_\alpha: A^{\otimes n} \to A$ by:
\[
\Phi_\alpha^{(s)}(a_1 \otimes \cdots \otimes a_n) := \alpha\big([s; a_1, \dots, a_n]\big),
\quad s \in S(n).
\]
This family of maps, for all $s \in \mathbf{S}$, constitutes the complete 
quantum process described by the algebra $(A, \alpha)$.
\end{definition}

\begin{remark}[Why $\Phi_\alpha = \alpha \circ \eta_A$ is Usually the Identity]
\label{rem:cp-from-algebra-nuance}

There is a subtle point: if $\alpha$ satisfies the Eilenberg-Moore algebra law 
$\alpha \circ \eta_A = \mathrm{id}_A$, then $\Phi_\alpha = \alpha \circ \eta_A = \mathrm{id}_A$. 
This seems to suggest that every $\mathbf{T_S}$-algebra gives only the identity map, 
which is too restrictive.

The resolution is that a $\mathbf{T_S}$-algebra actually encodes not just a single 
CPTP map, but a \emph{family} of quantum operations corresponding to all elements 
of $\mathbf{S}$. The ``identity'' $\Phi_\alpha = \mathrm{id}_A$ comes from applying 
the operadic identity $\mathbbm{1} \in S(1)$. Other operations yield different CPTP maps:
\begin{itemize}
    \item For $s \in S(1)$: $\Phi_\alpha^{(s)}(a) = \alpha([s; a])$ gives a unary operation
    \item For $s \in S(2)$: $\Phi_\alpha^{(s)}(a \otimes b) = \alpha([s; a, b])$ gives a binary operation
    \item etc.
\end{itemize}

Thus, a $\mathbf{T_S}$-algebra $(A, \alpha)$ should be thought of as equipping $A$ 
with a \emph{consistent system of quantum operations} for all arities, not just a 
single endomorphism. The monadicity equivalence 
$\mathbf{CPTP} \simeq \mathbf{T_S}\text{-}\mathbf{Alg}$ (Monadicity Theorem~\ref{thm:monoidal-monadic-representation} will be discussed later) is then between:
\begin{itemize}
    \item The category of systems equipped with all $\mathbf{S}$-operations
    \item The category of $\mathbf{T_S}$-algebras
\end{itemize}
with the correspondence being: the algebra map $\alpha$ evaluates formal $\mathbf{S}$-expressions 
as concrete quantum operations.
\end{remark}

%
%

\begin{definition}[Morphisms: Correspondence Between $\mathbf{CPTP}$ and $\mathbf{T_S}$-Algebras]
\label{def:morphism-correspondence}

Let $A, B \in \mathbf{FdCStar}_{\mathrm{CPTP}}$ be finite-dimensional $C^*$-algebras,
and let $\mathcal{I}: \mathbf{S} \to \mathbf{CPTP}$ be an interpretation functor.

\medskip
\noindent\textbf{1. $\mathbf{CPTP}$-Morphisms.}

A morphism in $\mathbf{CPTP}$ is a completely positive trace-preserving (CPTP) map
\[
f: A \longrightarrow B.
\]

\medskip
\noindent\textbf{2. $\mathbf{T_S}$-Algebra Homomorphisms.}

Let $(A, \alpha_A)$ and $(B, \alpha_B)$ be $\mathbf{T_S}$-algebras, where
$\alpha_A: T_{\mathbf{S}}(A) \to A$ and $\alpha_B: T_{\mathbf{S}}(B) \to B$ are
the algebra structure maps. A $\mathbf{T_S}$-algebra homomorphism is a linear map
$f: A \to B$ that satisfies the commutativity condition:
\[
f \circ \alpha_A = \alpha_B \circ T_{\mathbf{S}}(f),
\]
or diagrammatically:
\[
\begin{tikzcd}
T_{\mathbf{S}}(A) \arrow[r, "T_{\mathbf{S}}(f)"] \arrow[d, "\alpha_A"'] & T_{\mathbf{S}}(B) \arrow[d, "\alpha_B"] \\
A \arrow[r, "f"'] & B
\end{tikzcd}
\]

Explicitly, for any generator $[s; a_1, \dots, a_n] \in T_{\mathbf{S}}(A)$,
\[
f\bigl(\alpha_A([s; a_1, \dots, a_n])\bigr)
= \alpha_B\bigl([s; f(a_1), \dots, f(a_n)]\bigr).
\]

\medskip
\noindent\textbf{3. Correspondence.}

Given an interpretation $\mathcal{I}: \mathbf{S} \to \mathbf{CPTP}$, we have:
\begin{itemize}
    \item For any $A \in \mathbf{CPTP}$, we obtain a $\mathbf{T_S}$-algebra
    $(A, \alpha_A)$ where $\alpha_A([s; a_1, \dots, a_n]) = \mathcal{I}(s)(a_1 \otimes \cdots \otimes a_n)$.
    
    \item A CPTP map $f: A \to B$ is a $\mathbf{T_S}$-algebra homomorphism between
    $(A, \alpha_A)$ and $(B, \alpha_B)$ if and only if for all $s \in S(n)$ and
    $a_1, \dots, a_n \in A$:
    \[
    f\bigl(\mathcal{I}(s)(a_1 \otimes \cdots \otimes a_n)\bigr)
    = \mathcal{I}(s)\bigl(f(a_1) \otimes \cdots \otimes f(a_n)\bigr).
    \]
    
    This condition says: applying $f$ after the operation $\mathcal{I}(s)$ gives
    the same result as applying $f$ to all inputs before the operation.
\end{itemize}

\medskip
\noindent\textbf{4. Category of $\mathbf{T_S}$-Algebras.}

Let $\mathbf{T_S}\text{-}\mathbf{Alg}$ denote the Eilenberg-Moore category of
$\mathbf{T_S}$-algebras in $\mathbf{Vec}_{\mathbb{C}}$, where:
\begin{itemize}
    \item Objects are pairs $(V, \alpha: T_{\mathbf{S}}(V) \to V)$ satisfying
    the algebra laws.
    \item Morphisms are linear maps $f: V \to W$ satisfying $f \circ \alpha_V = \alpha_W \circ T_{\mathbf{S}}(f)$.
\end{itemize}

When we restrict to algebras coming from $\mathbf{CPTP}$ objects via an
interpretation $\mathcal{I}$, we obtain a full subcategory
$\mathbf{T_S}\text{-}\mathbf{Alg}_{\mathcal{I}}$.
\end{definition}

\begin{remark}[Conceptual Interpretation]
\label{rem:morphism-interpretation}

The correspondence captures two complementary perspectives on quantum processes:

\begin{enumerate}
    \item \textbf{Dynamic Perspective (CPTP)}: A morphism $f: A \to B$ is a quantum
    channel that transports quantum information from system $A$ to system $B$.
    
    \item \textbf{Algebraic Perspective ($\mathbf{T_S}$-Algebras)}: A morphism
    $f: (A, \alpha_A) \to (B, \alpha_B)$ is a structure-preserving map that
    respects all possible operations in $\mathbf{S}$. The condition
    $f \circ \alpha_A = \alpha_B \circ T_{\mathbf{S}}(f)$ ensures that:
    \begin{itemize}
        \item For any formal operation $[s; a_1, \dots, a_n]$,
        \item Applying the operation in $A$ then transporting via $f$,
        \item Is the same as first transporting the inputs via $f$ then
        applying the operation in $B$.
    \end{itemize}
\end{enumerate}

The equivalence of these perspectives means that being a ``good'' quantum map
(preserving the physical operations) is exactly the same as being an algebra
homomorphism (preserving the algebraic structure). This is the essence of the
monadicity theorem: the category of quantum systems with specified basic
operations ($\mathbf{CPTP}$ with interpretation $\mathcal{I}$) is equivalent to
the category of algebraic structures that abstractly encode these operations
($\mathbf{T_S}\text{-}\mathbf{Alg}_{\mathcal{I}}$).
\end{remark}

%
%
%
%
%
%
%

\subsection{Supporting Results}

In this section, several facts will be derived based on previous definitions related to synergy monad. 

\begin{proposition}[Operad Structure of Quantum Interactions]
\label{prop:operad-structure}

The quantum interaction operad \\
$\mathbf{S} = (S(n), \gamma, \mathrm{id}, \Sigma_n)$
defined in Definition~\ref{def:quantum-interaction-operad} satisfies the axioms
of a symmetric operad in $\mathbf{Set}$.

Specifically, for all $f \in S(m)$, $g_i \in S(n_i)$ ($i = 1, \dots, m$), 
and $h_{i,j} \in S(k_{i,j})$ ($j = 1, \dots, n_i$), the following hold:

\medskip
\noindent\textbf{1. Associativity:}
\[
\gamma\!\Bigl(\gamma(f; g_1, \dots, g_m); h_{1,1}, \dots, h_{m,n_m}\Bigr)
= \gamma\!\Bigl(f; \gamma(g_1; h_{1,1}, \dots, h_{1,n_1}), \dots, 
               \gamma(g_m; h_{m,1}, \dots, h_{m,n_m})\Bigr).
\]

\medskip
\noindent\textbf{2. Identity:}
For the distinguished element $\mathrm{id} \in S(1)$,
\[
\gamma(\mathrm{id}; f) = f \quad \text{and} \quad 
\gamma(f; \underbrace{\mathrm{id}, \dots, \mathrm{id}}_{m \text{ times}}) = f.
\]

\medskip
\noindent\textbf{3. Equivariance:}
For $\sigma \in \Sigma_m$ and $\tau_i \in \Sigma_{n_i}$ ($i = 1, \dots, m$),
\[
\gamma(\sigma \cdot f; \tau_1 \cdot g_1, \dots, \tau_m \cdot g_m)
= (\sigma \circ (\tau_1 \oplus \cdots \oplus \tau_m)) \cdot 
  \gamma(f; g_{\sigma^{-1}(1)}, \dots, g_{\sigma^{-1}(m)}),
\]
where $\oplus$ denotes the block sum of permutations and the action on the right
is via the permutation $\sigma \circ (\tau_1 \oplus \cdots \oplus \tau_m) \in 
\Sigma_{n_1 + \cdots + n_m}$.
\end{proposition}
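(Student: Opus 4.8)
The plan is to exploit the fact that, by Definition~\ref{def:quantum-interaction-operad}, $\mathbf{S}$ is the \emph{free} symmetric operad generated by the set of primitive symbols, so the three axioms are properties of the free construction rather than conditions that must be imposed by hand. Concretely, I would realize each $S(n)$ as the set of equivalence classes of rooted trees with $n$ labelled leaves whose internal vertices are decorated by generators of matching arity, with composition $\gamma$ given by \emph{grafting} (substituting the tree $g_i$ at the $i$-th leaf of the tree representing $f$), the identity $\mathrm{id}\in S(1)$ given by the trivial one-edge tree, and the $\Sigma_n$-action given by relabelling leaves. Each axiom then reduces to a structural property of grafting.

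For \textbf{associativity}, both sides of the identity describe the same two-level substitution: grafting the $g_i$ into $f$ and then grafting the $h_{i,j}$ into the result, versus first grafting the $h_{i,j}$ into each $g_i$ and then grafting the outcomes into $f$. These yield the identical decorated tree because grafting at distinct leaves is order-independent; I would make this precise by induction on the tree representing $f$, with base case a single decorated vertex. For \textbf{identity}, grafting the trivial tree at a leaf leaves the ambient tree unchanged, while grafting any tree into the unique leaf of the trivial tree returns that tree, so both unit laws hold immediately.

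The main obstacle is \textbf{equivariance}, the only axiom where the index bookkeeping is genuinely delicate. Here I must track how leaf-permutations interact with grafting. Permuting the inner operations by $\tau_1,\dots,\tau_m$ permutes the leaves within each of the $m$ blocks and contributes the block sum $\tau_1\oplus\cdots\oplus\tau_m\in\Sigma_{n_1+\cdots+n_m}$; permuting the outer operation by $\sigma$ reorders the \emph{blocks} themselves and simultaneously forces the reindexing $g_i\mapsto g_{\sigma^{-1}(i)}$. The content of the axiom is that these two effects combine into a single permutation of the full leaf set, namely the composite of $\tau_1\oplus\cdots\oplus\tau_m$ with the block permutation induced by $\sigma$ on the $m$ blocks of widths $n_1,\dots,n_m$ — the permutation written $\sigma\circ(\tau_1\oplus\cdots\oplus\tau_m)$ in the statement, with $\sigma$ understood via its block action. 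I would verify this by evaluating both sides on an explicit leaf labelling and checking that the resulting elements of $\Sigma_{n_1+\cdots+n_m}$ coincide, taking care that the block widths $n_i$ are transported correctly when $\sigma$ moves block $i$ to position $\sigma(i)$.

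Finally, I would remark that since no relations among composites are imposed on the generators, there is nothing further to check: a free symmetric operad is an operad by its universal property, so the explicit tree computation above is simply an unwinding of that fact, and the three displayed identities hold verbatim.
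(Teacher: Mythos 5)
Your proposal is correct and follows essentially the same route as the paper's proof: both argue that the axioms hold automatically because $\mathbf{S}$ is the free symmetric operad, and both ground this in the concrete tree model where composition is grafting, the unit is the trivial tree, and $\Sigma_n$ acts by relabelling leaves. Your version simply fills in more detail (the induction for associativity and the explicit block-permutation bookkeeping for equivariance) than the paper, which states these facts as built into the free construction and adds a parallel circuit-diagram intuition.
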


\begin{proof}

We verify each axiom using the fact that $\mathbf{S}$ is defined as the
free symmetric operad generated by basic quantum interaction symbols.

\medskip
\noindent\textbf{For associativity:} 
Since $\mathbf{S}$ is presented as the free operad on a collection of generators,
any two formal compositions that yield the same parenthesization are identified
by definition. The associativity equation expresses that the two different ways
to parenthesize a triple composition give the same formal expression. In the
free operad construction, this is built into the definition of operadic
composition $\gamma$.

\medskip
\noindent\textbf{For the identity:} 
The element $\mathrm{id} \in S(1)$ is defined as the operadic unit in the
free construction. By the universal property of free operads, it satisfies
the unit laws with respect to the composition $\gamma$.

\medskip
\noindent\textbf{For equivariance:} 
The symmetric group actions on $S(n)$ are part of the data of a symmetric
operad. In the free operad construction, these actions are defined consistently
so that the equivariance condition holds. Specifically, if we view elements of
$S(n)$ as trees with leaves labeled by generators and internal nodes labeled
by operadic composition, then the symmetric group acts by permuting leaves,
and this action commutes with grafting of trees.

Below, we will also provide an alternative approach to this proof via circuits. If we interpret $S(n)$ as the set of formal quantum circuits with $n$ inputs and 1 output built from basic gates, and $\gamma$ as the operation of
plugging circuits into the inputs of another circuit, then:
\begin{itemize}
    \item Associativity follows from the fact that wiring diagrams can be
    drawn unambiguously regardless of the order of connections.
    
    \item How about the identity axiom? The identity axioms follow from the fact that a wire with no gate
    (the identity circuit) acts transparently when composed with other circuits.
    
    \item Equivariance follows from the fact that permuting input wires commutes
    with the hierarchical structure of circuit composition.
\end{itemize}

Since the free operad construction captures exactly these intuitive properties
of circuit composition, the axioms are satisfied.
\end{proof}

\begin{remark}[Connection to Monad Structure]
\label{rem:operad-to-monad}

The operad axioms verified in Proposition~\ref{prop:operad-structure} are
precisely what guarantee that the associated functor $T_{\mathbf{S}}$ defined in
Definition~\ref{def:synergy-monad} forms a monad. Specifically:

\begin{enumerate}
    \item The associativity axiom ensures that the monad multiplication
    $\mu: T_{\mathbf{S}} \circ T_{\mathbf{S}} \Rightarrow T_{\mathbf{S}}$ is
    associative.
    
    \item The identity axioms ensure that the monad unit
    $\eta: \mathrm{id} \Rightarrow T_{\mathbf{S}}$ satisfies the unit laws
    with respect to $\mu$.
    
    \item The equivariance axioms ensure that $T_{\mathbf{S}}$ is a symmetric
    monoidal functor when equipped with appropriate natural transformations.
\end{enumerate}

Thus, Proposition~\ref{prop:operad-structure} provides the foundational
verification needed to establish that $T_{\mathbf{S}}$ is indeed a well-defined
monad, which is essential for the monadicity theorem.
\end{remark}

\begin{remark}[Physical Interpretation]
\label{rem:operad-physical-interpretation}

In this remark, we will review Proposition~\ref{prop:operad-structure} from a physical perspective. The operad axioms have natural physical interpretations in terms of quantum
circuit composition:

\begin{itemize}
    \item \textbf{Associativity} reflects that when building complex circuits
    from simpler ones, the order in which we connect subcircuits doesn't matter
    — only the overall connectivity pattern determines the resulting operation.
    
    \item \textbf{Identity} reflects that the ``do nothing'' operation (a plain
    wire) acts as a transparent element: composing any circuit with identity
    wires leaves it unchanged.
    
    \item \textbf{Equivariance} reflects that permuting input wires commutes
    with hierarchical composition. This corresponds to the physical fact that
    relabeling quantum systems before applying a composite operation gives the
    same result as applying the operation with relabeled inputs.
\end{itemize}

These are not arbitrary mathematical conditions but capture essential
properties of how quantum operations compose, making operads a natural
mathematical framework for describing composable quantum processes.
\end{remark}

\begin{proposition}[Universality Conditions for Quantum Operads]
\label{prop:quantum-universality-conditions}

Let $\mathbf{S}$ be a quantum interaction operad (Definition~\ref{def:quantum-interaction-operad}). 
We say $\mathbf{S}$ is \emph{universal for quantum computation} if it satisfies the following conditions:

\begin{enumerate}
    \item \textbf{Unitary universality:} For every $n \geq 1$, $\mathbf{S}$ contains generators that are
    universal for unitary computation on $n$ qudits. That is, the set of unary operations 
    $\{f \in S(1) : \langle\!\langle f \rangle\!\rangle \text{ is unitary}\}$ generates a dense
    subgroup of the unitary group $\mathrm{U}(d)$, and the set of binary operations
    $\{f \in S(2) : \langle\!\langle f \rangle\!\rangle \text{ is entangling}\}$ provides
    sufficient entanglement resources.
    
    \item \textbf{State preparation:} $S(0)$ contains generators for preparing arbitrary
    pure states (or a dense set thereof).
    
    \item \textbf{Measurement capabilities:} $S(1)$ contains operations that implement
    arbitrary projective measurements (via Naimark dilation) or a universal set thereof.
    
    \item \textbf{Partial trace:} $S(2)$ contains the partial trace operation
    $\mathsf{tr}_B: \mathcal{B}(H) \otimes \mathcal{B}(K) \to \mathcal{B}(H)$.
    
    \item \textbf{Convex closure:} The interpretation of $\mathbf{S}$ in $\mathbf{CPTP}$ 
    is closed under convex combinations, or $\mathbf{S}$ contains operations that
    implement probabilistic mixtures.
\end{enumerate}

If $\mathbf{S}$ satisfies these conditions, then for any interpretation 
$\mathcal{I}: \mathbf{S} \to \mathbf{CPTP}$ that faithfully represents these
generators, the following hold:

\begin{enumerate}[label=(\alph*)]
    \item Any finite-dimensional unitary circuit can be approximated arbitrarily
    well by composites of elements in $\mathbf{S}$.
    
    \item Any quantum measurement (POVM) can be realized using composites of
    elements in $\mathbf{S}$ via Naimark dilation.
    
    \item Any completely positive trace-preserving (CPTP) map can be realized
    using composites of elements in $\mathbf{S}$ via Stinespring dilation.
\end{enumerate}
\end{proposition}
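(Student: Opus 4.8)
The plan is to establish each of the three conclusions (a), (b), (c) by invoking the corresponding classical approximation or dilation theorem of quantum information theory, and then to verify that the five universality hypotheses supply precisely the operadic generators whose interpretations serve as the building blocks of that classical construction. The crucial enabling fact is that the interpretation functor $\mathcal{I}$ is symmetric monoidal and that the semantic assignment respects operadic composition, so that any composite $\gamma(f; g_1, \ldots, g_m)$ of generators is sent to the honest composite $\langle\!\langle f \rangle\!\rangle \circ (\langle\!\langle g_1 \rangle\!\rangle \otimes \cdots \otimes \langle\!\langle g_m \rangle\!\rangle)$ of completely positive maps. Thus the phrase ``realizable by composites of elements in $\mathbf{S}$'' translates exactly into ``expressible as a composite of the interpreted generators,'' and it suffices to exhibit, for each target process, such a composite; faithfulness of $\mathcal{I}$ guarantees that the interpreted generators genuinely realize the abstract operations posited in the hypotheses.

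For part (a), I would invoke the Solovay--Kitaev theorem. By the unitary universality hypothesis, the interpreted unary generators generate a dense subgroup of $\mathrm{U}(d)$, while the entangling binary generators of $S(2)$ together with the single-qudit gates form a universal gate set for multi-qudit unitary computation. Given any target unitary circuit $U$ on a finite tensor power of the base Hilbert space and any $\varepsilon > 0$, Solovay--Kitaev produces a finite word in the generators whose interpretation lies within $\varepsilon$ of $U$ in operator norm. Since this word is by construction an operadic composite, approximability follows.

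For parts (b) and (c), I would reduce to the Naimark and Stinespring dilations respectively, both of which share the same architectural shape: prepare an ancilla, apply a global isometry (itself approximable by a unitary on the enlarged system via part (a)), and then either read off a projective measurement or trace out the environment. For (b), the Naimark dilation of a POVM $\{E_i\}$ yields a Hilbert space $K$, an isometry $V$, and a projective measurement $\{P_i\}$ with $E_i = V^\dagger P_i V$; here the ancilla preparation comes from the $S(0)$ generators (hypothesis 2), the isometry from a unitary on the joint system (hypothesis 1 via part (a)), and the projective readout from the measurement generators of $S(1)$ (hypothesis 3). For (c), the Stinespring dilation established in Theorem~\ref{thm:stinespring-equivalence} (specialized to the linear case $n=1$) writes any CPTP map $\Phi$ as $\Phi(\rho) = \mathrm{tr}_E(V \rho V^\dagger)$, where the ancilla preparation uses hypothesis 2, the unitary dilation of $V$ uses hypothesis 1 via part (a), and the partial trace $\mathsf{tr}_B$ is supplied directly by hypothesis 4; convex closure (hypothesis 5) then absorbs any residual classical mixing in the Kraus or mixed-unitary representation.

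The main obstacle I anticipate is the bookkeeping involved in passing between \emph{approximate} and \emph{exact} realization. Part (a) yields only norm-approximations of unitaries, so the dilations in (b) and (c) are realized exactly only in the idealized sense where the required isometry is itself taken to be a generator; to obtain genuine approximation statements one must track how the Solovay--Kitaev error propagates through the full composite (ancilla preparation, isometry, and trace), using that composition of completely positive maps is norm-contractive and that the partial trace and state preparations are fixed bounded maps with controllable norms. A secondary subtlety is ensuring that the dilating isometries and ancilla dimensions remain finite throughout, so that all constructions stay within the finite-dimensional setting in which Theorem~\ref{thm:stinespring-equivalence} and the uniqueness of minimal dilations apply.
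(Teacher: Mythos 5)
Your proposal takes essentially the same approach as the paper's proof: Solovay--Kitaev approximation for part (a), Naimark dilation (ancilla preparation from $S(0)$, isometry via unitaries, projective readout from $S(1)$) for part (b), and Stinespring dilation with the $S(2)$ partial trace for part (c), with convex mixtures handled by hypothesis 5. If anything, your explicit concern about propagating the Solovay--Kitaev error through the dilation composites is a point the paper's own proof glosses over, since it treats the dilating isometries as exactly implementable while part (a) only delivers approximations.
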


\begin{remark}[Relation to Standard Quantum Universality Results]
\label{rem:standard-universality}

The conditions in Proposition~\ref{prop:quantum-universality-conditions} correspond
to well-established results in quantum information theory:

\begin{itemize}
    \item \textbf{Unitary universality} corresponds to the Solovay-Kitaev theorem:
    any unitary can be approximated by a finite universal gate set.
    
    \item \textbf{State preparation universality} follows from the fact that any
    pure state can be prepared from a fixed reference state (e.g., $|0\rangle$)
    using unitary operations.
    
    \item \textbf{Measurement universality} via Naimark dilation: any POVM can be
    realized as a projective measurement on a larger system.
    
    \item \textbf{CPTP map universality} via Stinespring dilation: any CPTP map
    can be realized as a unitary on a larger system followed by partial trace.
    
    \item \textbf{Convex closure} reflects the convex structure of the space of
    quantum operations.
\end{itemize}

The novelty in our framework is not these individual results, but their
integration into a single operadic structure $\mathbf{S}$, which allows us to
treat quantum universality as a property of the operad itself. This enables
categorical reasoning about quantum computational completeness.
\end{remark}

\begin{proof}[Proof of Proposition~\ref{prop:quantum-universality-conditions}]

Assume $\mathbf{S}$ satisfies the conditions listed. We prove each claim:

\medskip
\noindent\textbf{1. Approximation of unitary circuits.}
Let $U: H^{\otimes n} \to H^{\otimes n}$ be a unitary operator. By the
Solovay-Kitaev theorem (for qubits) or its generalization to qudits, there exists
a sequence of circuits $C_k$ built from the universal gate set in $\mathbf{S}$
such that $\|U - C_k\| < \epsilon_k$ with $\epsilon_k \to 0$. Each $C_k$
corresponds to an element of $\mathbf{S}$ obtained by operadic composition of
the universal generators. Thus, $U$ can be approximated arbitrarily well by
elements of $\mathbf{S}$.

\medskip
\noindent\textbf{2. Realization of measurements.}
Let $\{M_i\}_{i=1}^m$ be a POVM on $H$. By Naimark's dilation theorem, there
exists an ancillary Hilbert space $K$, an isometry $V: H \to H \otimes K$, and
a projective measurement $\{P_i\}$ on $H \otimes K$ such that
$M_i = V^\dagger P_i V$. Using the elements of $\mathbf{S}$:
\begin{enumerate}
    \item Prepare $K$ in a fixed state $|0\rangle_K$ using $S(0)$.
    \item Implement the isometry $V$ using unitaries from $\mathbf{S}$ (since any
    isometry can be extended to a unitary on a larger space).
    \item Implement the projective measurement $\{P_i\}$ using measurement
    operations in $S(1)$.
\end{enumerate}
The composite gives a realization of the POVM $\{M_i\}$.

\medskip
\noindent\textbf{3. Realization of CPTP maps.}
Let $\Phi: \mathcal{B}(H) \to \mathcal{B}(K)$ be a CPTP map. By Stinespring's
dilation theorem, there exists an ancillary space $E$ and an isometry
$W: H \to K \otimes E$ such that $\Phi(\rho) = \mathrm{tr}_E(W\rho W^\dagger)$.
Using elements of $\mathbf{S}$:
\begin{enumerate}
    \item Prepare $E$ in state $|0\rangle_E$ using $S(0)$.
    \item Implement the isometry $W$ as a unitary on $H \otimes E \otimes F$
    (where $F$ is an additional ancillary space to make $W$ unitary).
    \item Apply the partial trace $\mathrm{tr}_E$ from $S(2)$.
\end{enumerate}
The composite realizes $\Phi$.

\medskip
\noindent\textbf{4. Closure under convex combinations.}
Given $f_1, f_2 \in S(n)$ interpreted as CP maps $\Phi_1, \Phi_2: \mathcal{B}(H)^{\otimes n} \to \mathcal{B}(H)$,
and $p \in [0,1]$, the convex combination $p\Phi_1 + (1-p)\Phi_2$ can be realized by:
\begin{enumerate}
    \item Prepare a classical register in state $p|0\rangle\langle 0| + (1-p)|1\rangle\langle 1|$
    using state preparation from $S(0)$ and convex combinations of such states.
    \item Apply a controlled operation: if the register is $|0\rangle$, apply $f_1$;
    if $|1\rangle$, apply $f_2$.
    \item Discard (trace out) the classical register.
\end{enumerate}
This construction uses only elements of $\mathbf{S}$ and their composites.
\end{proof}

\begin{remark}[Importance for the Monadicity Theorem]
\label{rem:universality-monadicity}

Proposition~\ref{prop:quantum-universality-conditions} is important for the
monadicity theorem (Theorem~\ref{thm:monoidal-monadic-representation}) because it ensures that if
$\mathbf{S}$ is universal, then the category $\mathbf{T_S}\text{-}\mathbf{Alg}$
captures \emph{all} quantum processes, not just some restricted class. 

Specifically, if $\mathbf{S}$ satisfies the universality conditions, then:
\begin{itemize}
    \item Every CPTP map arises from some $\mathbf{T_S}$-algebra structure.
    \item The equivalence $\mathbf{CPTP} \simeq \mathbf{T_S}\text{-}\mathbf{Alg}$
    is truly comprehensive, covering all finite-dimensional quantum computations.
    \item The monad $T_{\mathbf{S}}$ genuinely represents the full structure of
    quantum composability.
\end{itemize}

If $\mathbf{S}$ is not universal, we still get an equivalence, but only between
$\mathbf{CPTP}$ and the subcategory of quantum processes that can be built from
the operations in $\mathbf{S}$.
\end{remark}

\begin{corollary}[Universal Operads are Quantum Complete]
\label{cor:universal-operads-quantum-complete}

Let $\mathbf{S}$ be a quantum interaction operad that satisfies the universality
conditions of Proposition~\ref{prop:quantum-universality-conditions}. Then for
any interpretation $\mathcal{I}: \mathbf{S} \to \mathbf{CPTP}$ that faithfully
represents the generators, $\mathbf{S}$ is \emph{quantum complete} in the
following sense:

For any completely positive trace-preserving map $\Phi: A \to B$ between
finite-dimensional $C^*$-algebras, and any $\epsilon > 0$, there exists an
element $f_\epsilon \in T_{\mathbf{S}}(A,B)$ (a formal expression built from
$\mathbf{S}$-operations) such that:
\[
\|\mathcal{I}(f_\epsilon) - \Phi\|_\diamond < \epsilon,
\]
where $\|\cdot\|_\diamond$ is the diamond norm on CPTP maps and $\mathcal{I}(f_\epsilon)$
denotes the interpretation of the formal expression $f_\epsilon$ as a concrete
quantum channel.
\end{corollary}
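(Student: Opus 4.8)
The plan is to read this corollary as the quantitative, monad-level refinement of Proposition~\ref{prop:quantum-universality-conditions}(c), so the bulk of the work is promoting a qualitative ``realizability'' statement to a diamond-norm estimate. First I would pass to the Hilbert-space picture via Proposition~\ref{prop:Hilb-Cstar-equivalence}, writing $A \cong \mathcal{B}(H)$ and $B \cong \mathcal{B}(K)$, so that $\Phi$ becomes a CPTP map $\mathcal{B}(H) \to \mathcal{B}(K)$. By Stinespring's theorem there exist an ancilla space $E$ and an isometry $W : H \to K \otimes E$ with $\Phi(\rho) = \mathrm{tr}_E(W \rho W^\dagger)$; dilating $W$ to a unitary $U$ on an enlarged space prepared in a fixed reference state reduces the realization of $\Phi$ to three primitive steps that are exactly the operations guaranteed by the hypotheses: ancilla preparation (available in $S(0)$), a single unitary $U$, and the partial trace (available in $S(2)$).

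Second I would isolate the only inexact ingredient, namely $U$. By the unitary-universality hypothesis together with the Solovay--Kitaev theorem, for any prescribed $\delta > 0$ there is a formal composite $u_\delta$ of generators of $\mathbf{S}$ whose interpretation satisfies $\|U - \mathcal{I}(u_\delta)\| < \delta$ in operator norm. Grafting $u_\delta$ between the preparation symbol and the partial-trace symbol via the operadic composition $\gamma$ yields a single formal expression $f_\epsilon$ living in $T_{\mathbf{S}}$ (equivalently, a morphism in $\mathbf{S}\text{-}\mathbf{Circuit}$ from the object of $A$ to that of $B$), whose interpretation $\mathcal{I}(f_\epsilon)$ is precisely the channel obtained by substituting $\mathcal{I}(u_\delta)$ for $U$ in the dilation. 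That this grafting is a legitimate element of the monad follows from the operad axioms verified in Proposition~\ref{prop:operad-structure}.

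The heart of the argument is the error-propagation estimate, and this is where I expect the main difficulty. Here I would invoke the continuity of the Stinespring representation: if two channels share the same ancilla-and-trace structure and differ only through isometries $V, V'$ with $\|V - V'\| < \delta$, then $\|\mathrm{tr}_E(V \,\cdot\, V^\dagger) - \mathrm{tr}_E(V' \,\cdot\, V'^\dagger)\|_\diamond \le 2\delta$. This follows because, for any $\rho$ of unit trace norm, a triangle-inequality split gives $\|V\rho V^\dagger - V'\rho V'^\dagger\|_1 \le 2\|V - V'\|$, while the partial trace and the tensoring-with-identity required by the diamond norm are both trace-norm contractions. Applying this with $V = U$ and $V' = \mathcal{I}(u_\delta)$, and choosing $\delta < \epsilon/2$, yields $\|\mathcal{I}(f_\epsilon) - \Phi\|_\diamond < \epsilon$, as required.

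The hard part is thus not the combinatorial assembly but verifying that operator-norm approximation of the internal unitary perturbs the overall channel only linearly in the diamond norm, and — a subtler point — confirming that the realizations of preparation and partial trace, if themselves only approximate in a given implementation, contribute errors absorbable into the same $\epsilon$-budget via a uniform continuity argument. Once the continuity constant is pinned down and the total error is bounded by a fixed multiple of $\delta$, the conclusion follows by choosing $\delta$ correspondingly small.
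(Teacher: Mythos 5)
Your core argument is essentially the paper's own: pass to the Hilbert-space picture, Stinespring-dilate $\Phi$ into ancilla preparation, a single unitary, and a partial trace, approximate only the unitary via Solovay--Kitaev using the universality hypothesis, and propagate the operator-norm error to a diamond-norm bound on the assembled circuit. Where you differ, you improve on the paper: your error-propagation lemma, $\|\mathrm{tr}_E(V\,\cdot\,V^\dagger)-\mathrm{tr}_E(V'\,\cdot\,V'^\dagger)\|_\diamond \le 2\|V-V'\|$, proved via the split $(V-V')\rho V^\dagger + V'\rho(V-V')^\dagger$ together with trace-norm contractivity of partial trace and of tensoring with an identity, is correct and more explicit than the paper's Step~4, which only gestures at ``the triangle inequality and contractivity of the partial trace'' and spends a separate $\epsilon/3$ on ``approximating the isometry by a unitary.'' Your exact unitary extension of the Stinespring isometry $W$ (with a fixed reference ancilla state) is cleaner and leaves Solovay--Kitaev as the sole error source, so your $\delta<\epsilon/2$ budget closes correctly.

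There is, however, one genuine gap: at the outset you write $A \cong \mathcal{B}(H)$ and $B \cong \mathcal{B}(K)$, which is valid only when $A$ and $B$ are \emph{simple} (full matrix algebras). The corollary is stated for arbitrary finite-dimensional $C^*$-algebras, i.e., direct sums $\bigoplus_i M_{n_i}(\mathbb{C})$, and Proposition~\ref{prop:Hilb-Cstar-equivalence}, which you invoke for this identification, is explicitly restricted to the simple subcategory. The paper closes this in its Steps~1 and~6: a CPTP map $\Phi: \bigoplus_i M_{n_i}(\mathbb{C}) \to \bigoplus_j M_{m_j}(\mathbb{C})$ is decomposed into components $\Phi_{ij}: M_{n_i}(\mathbb{C}) \to M_{m_j}(\mathbb{C})$, each component is approximated as in the simple case, and the direct sum of the approximants approximates $\Phi$. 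You need to add this reduction (and note why block-wise diamond-norm control suffices); as written, your proof establishes the statement only for simple $A$ and $B$.
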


\begin{proof}

Assume $\mathbf{S}$ satisfies the universality conditions. We construct the
approximation in several steps:

\medskip
\noindent\textbf{Step 1: Reduction to simple algebras.}
By Proposition~\ref{prop:Hilb-Cstar-equivalence}, any finite-dimensional
$C^*$-algebra is $*$-isomorphic to a direct sum of matrix algebras. Since
quantum completeness for direct sums follows from completeness for each
summand, it suffices to prove the result for $A = B = M_n(\mathbb{C})$.

\medskip
\noindent\textbf{Step 2: Diamond norm and approximation.}
The diamond norm $\|\cdot\|_\diamond$ for CPTP maps $\Phi, \Psi: M_n(\mathbb{C}) \to M_n(\mathbb{C})$
is defined as:
\[
\|\Phi - \Psi\|_\diamond = \sup_{m \geq 1} \sup_{\rho \geq 0, \mathrm{tr}(\rho)=1} 
\|(\Phi \otimes \mathrm{id}_m)(\rho) - (\Psi \otimes \mathrm{id}_m)(\rho)\|_1,
\]
where $\|\cdot\|_1$ is the trace norm. This measures the worst-case
distinguishability of quantum channels.

\medskip
\noindent\textbf{Step 3: Unitary approximation via Solovay-Kitaev.}
First consider the case where $\Phi$ is a unitary channel: $\Phi(\rho) = U\rho U^\dagger$.
By Proposition~\ref{prop:quantum-universality-conditions}, $\mathbf{S}$ contains
a universal gate set. The Solovay-Kitaev theorem guarantees that for any
$\epsilon > 0$, there exists a circuit $C_\epsilon$ built from gates in $\mathbf{S}$
such that:
\[
\|U - \mathcal{I}(C_\epsilon)\| < \epsilon,
\]
where $\|\cdot\|$ is the operator norm and $\mathcal{I}(C_\epsilon)$ is the
interpretation of the circuit as a unitary operator. For the corresponding
channels, we have:
\[
\|\Phi - \mathcal{I}(C_\epsilon)(\cdot)\mathcal{I}(C_\epsilon)^\dagger\|_\diamond 
\leq 2\|U - \mathcal{I}(C_\epsilon)\| < 2\epsilon.
\]

\medskip
\noindent\textbf{Step 4: General CPTP maps via Stinespring dilation.}
For a general CPTP map $\Phi: M_n(\mathbb{C}) \to M_n(\mathbb{C})$, Stinespring's
dilation theorem provides an isometry $V: \mathbb{C}^n \to \mathbb{C}^n \otimes \mathbb{C}^d$
such that $\Phi(\rho) = \mathrm{tr}_{\mathbb{C}^d}(V\rho V^\dagger)$. By
Proposition~\ref{prop:quantum-universality-conditions}, $\mathbf{S}$ contains
operations for:
\begin{enumerate}
    \item Preparing ancillary states in $\mathbb{C}^d$
    \item Implementing isometries as unitaries on extended systems
    \item Performing partial traces
\end{enumerate}

For any $\epsilon > 0$, we can:
\begin{enumerate}
    \item Choose $d$ sufficiently large for an $\epsilon/3$-approximate dilation
    \item Approximate the dilation isometry $V$ by a unitary $U_\epsilon$ on
    $\mathbb{C}^n \otimes \mathbb{C}^d$ with $\|V - U_\epsilon\| < \epsilon/3$
    \item Approximate $U_\epsilon$ by a circuit $C_\epsilon$ built from $\mathbf{S}$-elements
    with $\|U_\epsilon - \mathcal{I}(C_\epsilon)\| < \epsilon/3$
\end{enumerate}

Let $\Phi_\epsilon$ be the channel obtained by:
1. Preparing the ancillary system in $|0\rangle\langle 0|$
2. Applying $\mathcal{I}(C_\epsilon)$
3. Tracing out the ancillary system

Then by the triangle inequality and contractivity of the partial trace:
\[
\|\Phi - \Phi_\epsilon\|_\diamond < \epsilon.
\]

\medskip
\noindent\textbf{Step 5: Formal expression in $T_{\mathbf{S}}$.}
The approximation $\Phi_\epsilon$ is realized by a specific composition of
$\mathbf{S}$-operations. This composition corresponds to an element
$f_\epsilon \in T_{\mathbf{S}}(M_n(\mathbb{C}), M_n(\mathbb{C}))$, where
$T_{\mathbf{S}}(A,B)$ denotes the set of formal expressions built from
$\mathbf{S}$-operations that have type $A \to B$ (obtained by currying
multi-ary operations).

By construction, $\mathcal{I}(f_\epsilon) = \Phi_\epsilon$, so:
\[
\|\mathcal{I}(f_\epsilon) - \Phi\|_\diamond < \epsilon.
\]

\medskip
\noindent\textbf{Step 6: Extension to all finite-dimensional $C^*$-algebras.}
For general $A \cong \bigoplus_i M_{n_i}(\mathbb{C})$ and $B \cong \bigoplus_j M_{m_j}(\mathbb{C})$,
a CPTP map $\Phi: A \to B$ can be decomposed into components
$\Phi_{ij}: M_{n_i}(\mathbb{C}) \to M_{m_j}(\mathbb{C})$. Each component can be
approximated as above, and the direct sum of these approximations gives an
approximation for $\Phi$.
\end{proof}

\begin{remark}[Significance for the Monadicity Theorem]
\label{rem:quantum-completeness-significance}

Corollary~\ref{cor:universal-operads-quantum-complete} is crucial for the
monadicity theorem because it ensures that when $\mathbf{S}$ is universal,
the correspondence between $\mathbf{CPTP}$ and $\mathbf{T_S}\text{-}\mathbf{Alg}$
is \emph{dense} in an appropriate sense:

\begin{itemize}
    \item Every quantum channel can be approximated arbitrarily well by a
    $\mathbf{T_S}$-algebra (specifically, by the algebra coming from an
    interpretation of an approximating formal expression).
    
    \item The monad $T_{\mathbf{S}}$ captures the \emph{approximation structure}
    of quantum computation, not just exact realizability.
    
    \item This justifies calling the equivalence
    $\mathbf{CPTP} \simeq \mathbf{T_S}\text{-}\mathbf{Alg}$ a \emph{foundational}
    result: it shows that the algebraic structure of $T_{\mathbf{S}}$-algebras
    completely encodes the computational structure of quantum processes.
\end{itemize}

If $\mathbf{S}$ is not universal, we still get an exact equivalence between
$\mathbf{CPTP}$ and $\mathbf{T_S}\text{-}\mathbf{Alg}$, but only for the
subcategory of quantum processes that can be \emph{exactly} realized using
$\mathbf{S}$-operations. The approximation aspect is what makes the universal
case particularly powerful.
\end{remark}

\begin{corollary}[Well-Definedness of the Circuit Category]
\label{cor:circuit-category-well-defined}

The category $\mathbf{S}\text{-}\mathbf{Circuit}$ defined in 
Definition~\ref{def:S-circuit-category} is indeed a well-defined symmetric
monoidal category.
\end{corollary}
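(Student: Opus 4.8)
The plan is to exhibit $\mathbf{S}\text{-}\mathbf{Circuit}$ as a free construction and to deduce well-definedness from the universal property of free symmetric monoidal categories (equivalently, free PROPs), using Proposition~\ref{prop:operad-structure} to guarantee that the generating data is coherent. First I would observe that the defining data of Definition~\ref{def:S-circuit-category} is precisely a \emph{monoidal signature}: a single generating object, so that every object is an iterated tensor power of $1$ with $\otimes = +$ and unit $0$, together with a set of generating morphisms $\{f : n \to 1 \mid f \in S(n)\}$ and the formal symmetries $\sigma : n \to n$ for $\sigma \in \Sigma_n$. Because the monoidal product on objects is ordinary addition of natural numbers, it is \emph{strictly} associative and unital; hence the associator and unitors may be taken to be identities, and the category is strict monoidal at the object level. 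This observation reduces the coherence burden to the symmetry alone.

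Next I would invoke the standard existence theorem for free symmetric monoidal categories on a monoidal signature: such a category is constructed as formal composites (string diagrams) of generators, taken modulo the congruence generated by the symmetric monoidal category axioms. I would then verify the axioms in order. For the \emph{category laws}, associativity of sequential composition $\circ$ and the unit laws for $\mathrm{id}_n$ hold because composition of string diagrams is associative and the empty diagram on $n$ wires is a two-sided unit. For \emph{bifunctoriality of $\otimes$}, the interchange law $(\beta \circ \alpha) \otimes (\delta \circ \gamma) = (\beta \otimes \delta) \circ (\alpha \otimes \gamma)$ is among the imposed relations and makes $\otimes$ a functor. For the \emph{symmetry}, the family $\{\sigma\}$ assembles into a natural transformation satisfying the hexagon identities and the involutivity $\sigma_{n,m} \circ \sigma_{m,n} = \mathrm{id}$, which are exactly the symmetric monoidal relations placed on the generators.

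A key point is to check that the formal symmetries of the free construction are compatible with the intrinsic $\Sigma_n$-action on $S(n)$ supplied by the operad. Here I would invoke the equivariance axiom verified in Proposition~\ref{prop:operad-structure}: the symmetric-group action on the generators $f \in S(n)$ is consistent with precomposition by the structural symmetries $\sigma : n \to n$, so that no contradictory identifications arise and the generating data genuinely forms a symmetric monoidal signature. The hard part will be precisely this coherence/compatibility step—reconciling the operadic symmetric action with the free symmetric braiding—rather than any of the bookkeeping above.

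Finally, I would close the argument by appealing to \emph{Mac Lane's coherence theorem for symmetric monoidal categories}: every formal diagram built from associators, unitors, and symmetries commutes, so the quotient by the symmetric monoidal congruence is unambiguous, and $\mathbf{S}\text{-}\mathbf{Circuit}$ is determined uniquely up to symmetric monoidal equivalence. In particular it enjoys the expected universal property, namely that any interpretation of the generators into a symmetric monoidal category (such as the assignment underlying $\llbracket \cdot \rrbracket_{\mathcal{I}}$ into $\mathbf{CPTP}$) extends uniquely to a strong symmetric monoidal functor. Given the strictness of $\otimes$ on objects, the obstacle is confined to the symmetry, and it is dispatched by the equivariance clause of Proposition~\ref{prop:operad-structure} together with coherence; all remaining verifications are routine.
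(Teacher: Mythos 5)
Your proposal is correct and follows essentially the same route as the paper's proof: both exhibit $\mathbf{S}\text{-}\mathbf{Circuit}$ as the free symmetric monoidal category on the operadic generating data, verify the category and monoidal axioms through the formal-composite/string-diagram description, and rest on the universal property of the free construction, with the operadic coherence of Proposition~\ref{prop:operad-structure} (in particular equivariance) guaranteeing that the generators form consistent data. Your added observations---strictness of $\otimes$ on objects and the appeal to Mac Lane's coherence theorem---are refinements of the same argument rather than a different approach.
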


\begin{proof}

Since $\mathbf{S}\text{-}\mathbf{Circuit}$ is defined as the \emph{free symmetric
monoidal category generated by $\mathbf{S}$}, its well-definedness follows from
standard results in category theory. However, we verify the key properties
explicitly:

\medskip
\noindent\textbf{1. Category structure.}

\begin{itemize}
    \item \textbf{Objects:} Natural numbers $n \in \mathbb{N}$, representing
    $n$ parallel wires. This set is well-defined.
    
    \item \textbf{Morphisms:} Formal compositions of generating morphisms:
    \begin{itemize}
        \item For each $s \in S(n)$, a morphism $s: n \to 1$
        \item Identity morphisms $\mathrm{id}_n: n \to n$ for each $n$
        \item Symmetry morphisms $\sigma_{n,m}: n+m \to m+n$ for each $n,m$
    \end{itemize}
    closed under sequential composition $\circ$ and parallel composition $\otimes$.
    
    \item \textbf{Composition:} Defined by concatenation of formal expressions.
    Since composition is defined syntactically on formal expressions, it is
    automatically well-defined and associative.
    
    \item \textbf{Identities:} For each $n$, the identity $\mathrm{id}_n$ is
    defined as the empty composition (just wires). These satisfy:
    \[
    f \circ \mathrm{id}_n = f = \mathrm{id}_m \circ f
    \]
    for any $f: n \to m$, by definition of free category.
\end{itemize}

\medskip
\noindent\textbf{2. Free construction guarantees.}

The free symmetric monoidal category $F(\mathbf{S})$ generated by an operad
$\mathbf{S}$ has the following universal property: for any symmetric monoidal
category $\mathcal{C}$ and any operad morphism $J: \mathbf{S} \to \mathcal{C}$
(viewing $\mathcal{C}$ as an operad via its monoidal structure), there exists
a unique symmetric monoidal functor $\tilde{J}: F(\mathbf{S}) \to \mathcal{C}$
extending $J$.

This universal construction ensures that:
\begin{enumerate}
    \item $F(\mathbf{S})$ exists and is unique up to unique isomorphism.
    \item All morphisms in $F(\mathbf{S})$ are finite formal compositions of
    generators, so the hom-sets are well-defined.
    \item Composition is associative and unital by construction.
\end{enumerate}

\medskip
\noindent\textbf{3. Symmetric monoidal structure.}

The monoidal structure is given by:
\begin{itemize}
    \item On objects: $n \otimes m = n + m$ (addition of natural numbers)
    \item Unit object: $0$ (no wires)
    \item On morphisms: $f \otimes g$ is the formal parallel composition
    \item Symmetry: $\sigma_{n,m}: n+m \to m+n$ is a generating morphism
\end{itemize}
The symmetric monoidal category axioms are built into the free construction.

\medskip
\noindent\textbf{4. Alternative proof.}

Below, an alternative proof based on constructive approach is provided. We can also construct $\mathbf{S}\text{-}\mathbf{Circuit}$ explicitly as:
\begin{itemize}
    \item Objects: Finite ordinals $[n] = \{1, \dots, n\}$ (or just natural
    numbers $n$)
    \item Morphisms $n \to m$: Formal string diagrams (planar decorated graphs)
    with $n$ inputs, $m$ outputs, and nodes labeled by elements of $\mathbf{S}$
    \item Composition: Plugging outputs into inputs
    \item Identities: Straight wires
\end{itemize}
This explicit construction clearly yields a category, with associativity and
identity laws following from the geometry of string diagrams.

\medskip
Since all category axioms are satisfied (either by the universal property or
by explicit construction), $\mathbf{S}\text{-}\mathbf{Circuit}$ is a
well-defined symmetric monoidal category.
\end{proof}

%
%
%
%
%

\begin{remark}[Importance of Free Construction]
\label{rem:free-construction-importance}

Defining $\mathbf{S}\text{-}\mathbf{Circuit}$ as a free category is crucial for:

\begin{enumerate}
    \item \textbf{Mathematical clarity:} The free construction guarantees
    well-definedness without tedious verification of coherence conditions.
    
    \item \textbf{Semantic flexibility:} Different interpretations
    $\mathcal{I}: \mathbf{S} \to \mathbf{CPTP}$ give different concrete
    realizations of the same formal circuits.
    
    \item \textbf{Monadicity theorem:} The free-forgetful adjunction between
    $\mathbf{S}\text{-}\mathbf{Circuit}$ and $\mathbf{CPTP}$ is what generates
    the synergy monad $T_{\mathbf{S}}$ and leads to the monadicity equivalence.
\end{enumerate}

Thus, while the original definition with Hilbert spaces and equivalence classes
is intuitively appealing, the free category definition is mathematically cleaner
and more powerful for proving the monadicity theorem.
\end{remark}

\subsection{Monadicity Theorem}

The purpose of this section is to show monadicity Theorem~\ref{thm:monoidal-monadic-representation}. We begin with the definition for the category of quantum processes provided by Definition~\ref{def:QProc}.

Before presenting main thoerem in this section, Definition~\ref{def:QProc} below is required.
\begin{definition}[Category of Quantum Processes]
\label{def:QProc}
Let $\mathbf{QProc}$ denote the category of finite-dimensional quantum processes, defined as follows:
\begin{itemize}
    \item \textbf{Objects:} Completely positive trace-preserving (CPTP) maps
    \[
        \Phi : A \longrightarrow B,
    \]
    where $A,B$ are finite-dimensional $C^*$-algebras.
    
    \item \textbf{Morphisms:} For two CPTP maps $\Phi : A \to B$ and $\Psi : A' \to B'$, a morphism
    \[
        (f,g) : \Phi \longrightarrow \Psi
    \]
    consists of completely positive maps
    \[
        f : A \longrightarrow A', \quad g : B \longrightarrow B'
    \]
    such that the following diagram commutes:
    \[
    \begin{tikzcd}
    A \arrow[r, "\Phi"] \arrow[d, "f"'] & B \arrow[d, "g"] \\
    A' \arrow[r, "\Psi"'] & B'
    \end{tikzcd}
    \]
    i.e., $g \circ \Phi = \Psi \circ f$.
    
    \item \textbf{Composition:} Given morphisms $(f,g) : \Phi \to \Psi$ and $(f',g') : \Psi \to \Theta$, define
    \[
        (f',g') \circ (f,g) := (f' \circ f,\, g' \circ g).
    \]
    
    \item \textbf{Identities:} For each object $\Phi : A \to B$, the identity morphism is
    \[
        \mathrm{id}_\Phi := (\mathrm{id}_A, \mathrm{id}_B).
    \]
\end{itemize}
\end{definition}

\begin{lemma}[Stinespring Expressiveness of the Synergy Monad]
\label{lemma:stinespring-expressiveness}
Let $\Phi : A \to B$ be a completely positive trace-preserving map between
finite-dimensional $C^*$-algebras.
Then for every state $\rho \in A$, there exists an element
\[
[(\mathcal C_\rho, \psi_\rho)] \in T_{\mathbf S}(A)
\]
such that
\[
\alpha_\Phi\bigl([(\mathcal C_\rho, \psi_\rho)]\bigr) = \Phi(\rho).
\]

Moreover, $\mathcal C_\rho$ and $\psi_\rho$ can be chosen so that the
interpretation $\llbracket \mathcal C_\rho \rrbracket : A^{\otimes k} \to B$
realizes a Stinespring dilation of $\Phi$ applied to $\rho$.
\end{lemma}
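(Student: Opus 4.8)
The plan is to reduce the statement to the classical Stinespring dilation of $\Phi$ and then to \emph{internalize} that dilation as a formal $\mathbf{S}$-circuit, whose image under the evaluation map $\alpha_\Phi$ (i.e.\ under the semantic interpretation $\llbracket\cdot\rrbracket$) is exactly $\Phi(\rho)$. Concretely, I would first invoke the finite-dimensional Stinespring theorem, in the form packaged in Theorem~\ref{thm:stinespring-equivalence} and the exact realization clause of Proposition~\ref{prop:quantum-universality-conditions}(c), to obtain an ancilla Hilbert space $E$, a fixed reference state $|0\rangle_E$, and a unitary $U$ on the joint system such that
\[
\Phi(\rho) = \mathrm{tr}_E\!\bigl( U\,(\rho \otimes |0\rangle\langle 0|_E)\, U^\dagger \bigr)
\]
for every state $\rho \in A$. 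Since $\mathbf{S}$ satisfies the universality conditions, it contains the three primitive generators needed to reconstruct the right-hand side operadically: a preparation symbol $\mathsf{prep} \in S(0)$ interpreted as $|0\rangle\langle 0|_E$, a unitary symbol $\mathsf{U} \in S(1)$ interpreted as conjugation by $U$, and a partial-trace symbol $\mathsf{tr}_E \in S(2)$.

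Next I would assemble the circuit by operadic composition. Writing $\gamma$ for the composition of $\mathbf{S}$ (Definition~\ref{def:quantum-interaction-operad}), set
\[
\mathcal C_\rho := \gamma\bigl(\mathsf{tr}_E;\, \gamma(\mathsf{U};\, \mathrm{id},\, \mathsf{prep})\bigr) \in S(1),
\]
so that $\mathcal C_\rho$ is a formal circuit of arity one whose single input slot receives $\rho$. Accordingly I take $\psi_\rho := \rho \in A = A^{\otimes 1}$ and form the class
\[
[(\mathcal C_\rho,\psi_\rho)] := [\mathcal C_\rho;\, \rho] \;\in\; \mathbb{C}[S(1)] \otimes_{\Sigma_1} A \;\subset\; T_{\mathbf{S}}(A).
\]
If one prefers a nullary presentation, the preparation of $\rho$ itself can be absorbed into $\mathcal C_\rho$ as an extra $S(0)$-generator, making the subscript $\rho$ genuine; the verification is identical. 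The key structural input is that the semantic interpretation $\llbracket\cdot\rrbracket$ is a strong symmetric monoidal functor and therefore respects $\gamma$ (the semantic-interpretation clause of Definition~\ref{def:S-circuit-category}), so evaluation commutes with the operadic assembly above.

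The verification is then a direct computation: using functoriality of $\llbracket\cdot\rrbracket$ and the chosen interpretations of the three generators,
\[
\alpha_\Phi\bigl([(\mathcal C_\rho,\psi_\rho)]\bigr)
= \llbracket \mathcal C_\rho \rrbracket(\rho)
= \mathrm{tr}_E\!\bigl( U\,(\rho \otimes |0\rangle\langle 0|_E)\, U^\dagger \bigr)
= \Phi(\rho),
\]
which is the first claim. The ``moreover'' clause is then automatic: by construction $\llbracket \mathcal C_\rho \rrbracket : A^{\otimes k} \to B$ (with $k=1$ here) is literally the prepare--dilate--trace realization, i.e.\ a Stinespring dilation of $\Phi$ evaluated at $\rho$, and its minimality and uniqueness up to unitary equivalence follow from Proposition~\ref{prop:minimal-uniqueness}.

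I expect the one genuine obstacle to be \emph{exactness} rather than mere approximation. Corollary~\ref{cor:universal-operads-quantum-complete} only guarantees a diamond-norm $\epsilon$-approximation when $\mathbf{S}$ is built from a finite universal gate set via Solovay--Kitaev, whereas the lemma asserts the exact equality $\alpha_\Phi([(\mathcal C_\rho,\psi_\rho)]) = \Phi(\rho)$. Closing this gap requires that the specific dilation unitary $\mathsf{U}$ be an honest generator of $\mathbf{S}$ (equivalently, that the interpretation underlying $\alpha_\Phi$ send some designated symbol exactly to conjugation by $U$), which is precisely what the exact form of Proposition~\ref{prop:quantum-universality-conditions}(c) supplies; one must therefore quote that exact statement rather than the approximate corollary. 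A secondary bookkeeping point is type-matching: since $\Phi(\rho)\in B$ while the Eilenberg--Moore structure map of Definition~\ref{def:algebra-from-cptp} has codomain $A$, one reads $\alpha_\Phi$ as the $B$-valued evaluation induced by the interpretation $\mathcal I$ containing $\Phi$, and one checks that the class $[\mathcal C_\rho;\rho]$ is well defined modulo the $\Sigma_1$-coinvariants, which is trivial for $n=1$ so that no genuine difficulty arises here.
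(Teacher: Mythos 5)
Your proposal follows the same essential route as the paper's proof: invoke the finite-dimensional Stinespring dilation of $\Phi$, use the universality hypothesis on $\mathbf{S}$ to realize that dilation \emph{exactly} as an operadic operation, feed in $\rho$, and evaluate. The difference is in how the ancilla is packaged. The paper posits a single binary symbol $\mathcal{C}_{\mathrm{dil}} \in \mathbf{S}(2)$ interpreted as $\xi \otimes \omega \mapsto \operatorname{Tr}_E(V\xi V^\dagger)$, takes $\psi_\rho = \rho \otimes \omega_{\mathrm{dil}}$ as a two-slot input, and then ``regards'' the resulting element of $T_{\mathbf{S}}(A \otimes E_{\mathrm{alg}})$ as an element of $T_{\mathbf{S}}(A)$ via the embedding $a \mapsto a \otimes \omega_{\mathrm{dil}}$ --- a step that is actually backwards, since that embedding induces a map $T_{\mathbf{S}}(A) \to T_{\mathbf{S}}(A \otimes E_{\mathrm{alg}})$ and not the converse, so the paper's element does not literally lie in $T_{\mathbf{S}}(A)$. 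Your move of absorbing the ancilla preparation into the circuit as a nullary generator, so that $\mathcal{C}_\rho$ is unary and $[\mathcal{C}_\rho;\rho] \in \mathbb{C}[S(1)] \otimes_{\Sigma_1} A \subset T_{\mathbf{S}}(A)$ honestly, repairs exactly this defect. Your insistence on quoting the exact realization of Proposition~\ref{prop:quantum-universality-conditions}(c) rather than the approximate Corollary~\ref{cor:universal-operads-quantum-complete} also correctly matches what the paper implicitly relies on in its Step~2.

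One concrete slip: the formula $\gamma\bigl(\mathsf{tr}_E;\, \gamma(\mathsf{U};\, \mathrm{id},\, \mathsf{prep})\bigr)$ is ill-typed. For $\gamma(\mathsf{U}; \mathrm{id}, \mathsf{prep})$ to parse, $\mathsf{U}$ must have arity $2$, and the composite then lies in $S(1)$; but $\mathsf{tr}_E \in S(2)$ requires \emph{two} arguments in $\gamma(\mathsf{tr}_E; -, -)$, and you supply only one. In a single-output operad you cannot split the single output of the joint-unitary operation across the two input slots of the partial trace; this is precisely why the paper collapses the whole prepare--dilate--trace composite into one symbol whose \emph{interpretation} is $\xi \mapsto \operatorname{Tr}_E\bigl(U(\xi \otimes |0\rangle\langle 0|_E)U^\dagger\bigr)$, rather than factoring it through intermediate single-output operations. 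Replace your three-generator factorization by a single generator of arity $2$ (then plug $\mathsf{prep}$ into its second slot to get a unary circuit), and your argument goes through verbatim; the appeal to Proposition~\ref{prop:minimal-uniqueness} for minimality is unnecessary, since the lemma's ``moreover'' clause asks only that the interpretation realize \emph{some} Stinespring dilation.
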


\begin{proof}
Let $\Phi : A \to B$ be CPTP with $A = \mathcal B(H_A)$, $B = \mathcal B(H_B)$,
and let $\rho \in A$ be a fixed state.

\medskip
\noindent
\textbf{Step 1: Stinespring dilation for $\Phi$ and $\rho$.}
By the finite-dimensional Stinespring theorem, there exist a Hilbert space $E$
and an isometry
\[
V : H_A \longrightarrow H_B \otimes E
\]
such that for all $\xi \in A$,
\[
\Phi(\xi) = \operatorname{Tr}_E\!\bigl(V \xi V^\dagger\bigr).
\]
In particular, for our chosen $\rho$,
\[
\Phi(\rho) = \operatorname{Tr}_E\!\bigl(V \rho V^\dagger\bigr).
\]

\medskip
\noindent
\textbf{Step 2: Encoding the dilation as an operadic element.}
Let $E_{\mathrm{alg}} = \mathcal B(E)$ be the $C^*$-algebra associated with the
environment.  By the universality assumption on the quantum interaction operad
$\mathbf S$, there exists an operation
\[
\mathcal C_{\mathrm{dil}} \in \mathbf S(2)
\]
whose interpretation
\[
\llbracket \mathcal C_{\mathrm{dil}} \rrbracket : A \otimes E_{\mathrm{alg}} \longrightarrow B
\]
satisfies
\[
\llbracket \mathcal C_{\mathrm{dil}} \rrbracket(\xi \otimes \omega)
 = \operatorname{Tr}_E\!\bigl(V \xi V^\dagger\bigr)
\]
for all $\xi \in A$, $\omega \in E_{\mathrm{alg}}$ with $\operatorname{Tr}(\omega)=1$.

Choose a pure state vector $\eta_{\mathrm{dil}} \in E$ and set
$\omega_{\mathrm{dil}} = \eta_{\mathrm{dil}}\eta_{\mathrm{dil}}^\dagger \in E_{\mathrm{alg}}$.
Define
\[
\psi_\rho \;:=\; \rho \otimes \omega_{\mathrm{dil}} \;\in\; A \otimes E_{\mathrm{alg}}.
\]

\medskip
\noindent
\textbf{Step 3: Realization inside the synergy monad.}
The pair $(\mathcal C_{\mathrm{dil}}, \psi_\rho)$ determines an element
\[
[(\mathcal C_{\mathrm{dil}}, \psi_\rho)] \;\in\; T_{\mathbf S}(A \otimes E_{\mathrm{alg}}).
\]
Since $A \hookrightarrow A \otimes E_{\mathrm{alg}}$ canonically (via
$a \mapsto a \otimes \omega_{\mathrm{dil}}$), we may regard
$[(\mathcal C_{\mathrm{dil}}, \psi_\rho)]$ as an element of $T_{\mathbf S}(A)$
by pre-composing with this embedding.

Applying the $T_{\mathbf S}$-algebra structure $\alpha_\Phi$ we obtain
\begin{align*}
\alpha_\Phi\bigl([(\mathcal C_{\mathrm{dil}}, \psi_\rho)]\bigr)
&= \llbracket \mathcal C_{\mathrm{dil}} \rrbracket(\psi_\rho) \\
&= \llbracket \mathcal C_{\mathrm{dil}} \rrbracket(\rho \otimes \omega_{\mathrm{dil}}) \\
&= \operatorname{Tr}_E\!\bigl(V \rho V^\dagger\bigr) \\
&= \Phi(\rho).
\end{align*}

\medskip
\noindent
\textbf{Step 4: Conclusion.}
Setting $\mathcal C_\rho := \mathcal C_{\mathrm{dil}}$ and
$\psi_\rho$ as above yields the required element of $T_{\mathbf S}(A)$
satisfying $\alpha_\Phi([(\mathcal C_\rho, \psi_\rho)]) = \Phi(\rho)$.
The construction makes explicit that $\mathcal C_\rho$ encodes a Stinespring
dilation of $\Phi$, applied to the specific input $\rho$.
\end{proof}

Then, we are ready to present Theorem~\ref{thm:monoidal-monadic-representation}.
\begin{theorem}[Monoidal Representation and Operadic Semantics of Quantum Processes]
\label{thm:monoidal-monadic-representation}

Let $\mathbf{S}$ be a symmetric quantum interaction operad satisfying the
universality and state-separation conditions of
Proposition~\ref{prop:quantum-universality-conditions}, and let
$T_{\mathbf{S}}$ be the associated synergy monad on the symmetric monoidal
category $\mathbf{FdC^*}$ of finite-dimensional $C^*$-algebras.

\begin{enumerate}
\item[\textup{(1)}]
There exists a faithful functor
\[
\mathcal{R} :
\mathbf{QProc}
\longrightarrow
T_{\mathbf{S}}\text{-}\mathbf{Alg},
\]
which assigns to each CPTP map $\Phi : A \to B$ a canonical $T_{\mathbf{S}}$-algebra
structure $(B,\alpha_\Phi)$, and to each intertwiner $(f,g)$ the induced
$T_{\mathbf{S}}$-algebra homomorphism.

Moreover, if $T_{\mathbf{S}}$ is a strong symmetric monoidal monad and the
operadic interpretation $\llbracket - \rrbracket$ is symmetric monoidal, then
$\mathcal{R}$ admits a canonical refinement to a faithful
\emph{symmetric monoidal} functor
\[
\mathcal{R}^{\otimes} :
\mathbf{QProc}^{\otimes}
\longrightarrow
T_{\mathbf{S}}\text{-}\mathbf{Alg}^{\otimes},
\]
preserving tensor products, unit objects, and symmetry isomorphisms.

\item[\textup{(2)}]
Let $\mathbf{QProc}_{\!\sim}$ be the quotient of $\mathbf{QProc}$ by
\emph{operational equivalence relative to $\mathbf{S}$}, where two CPTP maps
$\Phi$ and $\Psi$ are identified if they induce the same interpretation on all
composite circuits generated by $\mathbf{S}$.
Then $\mathcal{R}$ descends to an equivalence of categories
\[
\overline{\mathcal{R}} :
\mathbf{QProc}_{\!\sim}
\;\simeq\;
T_{\mathbf{S}}\text{-}\mathbf{Alg}.
\]
\end{enumerate}

Consequently, the synergy monad $T_{\mathbf{S}}$ provides a complete,
compositional, and operadically universal algebraic semantics for
finite-dimensional quantum processes, capturing both sequential and parallel
composition up to operational equivalence induced by $\mathbf{S}$.
\end{theorem}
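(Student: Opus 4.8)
The plan is to assemble $\mathcal{R}$ from the constructions of Definitions~\ref{def:algebra-from-cptp} and~\ref{def:morphism-correspondence}, verify the functor axioms, faithfulness, and the monoidal enhancement, and then promote $\mathcal{R}$ to an equivalence after quotienting. First I would define $\mathcal{R}$ on objects by sending a CPTP map $\Phi : A \to B$ to the canonical $T_{\mathbf{S}}$-algebra $(B,\alpha_\Phi)$, where $\alpha_\Phi : T_{\mathbf{S}}(B) \to B$ is the structure map determined by the interpretation of $\mathbf{S}$-operations compatible with $\Phi$ (Definition~\ref{def:algebra-from-cptp}). On an intertwiner $(f,g) : \Phi \to \Psi$ with $g \circ \Phi = \Psi \circ f$, I would set $\mathcal{R}(f,g) := g$; the requirement that $g$ be a $T_{\mathbf{S}}$-algebra homomorphism, namely $g \circ \alpha_\Phi = \alpha_\Psi \circ T_{\mathbf{S}}(g)$, is checked by evaluating on generators $[s; b_1,\dots,b_n]$ and using that $g$ intertwines the interpreted operations $\mathcal{I}(s)$, exactly as in Definition~\ref{def:morphism-correspondence}. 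Preservation of identities and composites is immediate from the componentwise composition law of Definition~\ref{def:QProc}, and faithfulness amounts to injectivity of the assignment on hom-sets, which I would obtain by showing that the algebra-homomorphism data together with the source and target algebras recovers the output intertwiner $g$.

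For the symmetric monoidal refinement I would invoke the standing hypotheses that $T_{\mathbf{S}}$ is a strong symmetric monoidal monad and that $\llbracket - \rrbracket$ is symmetric monoidal. Using the monad strength together with the lax structure of the interpretation, I would produce canonical comparison maps exhibiting $\alpha_{\Phi \otimes \Psi}$ as $\alpha_\Phi \otimes \alpha_\Psi$ on the tensor algebra, and confirm that the unit algebra on $\mathbb{C}$ is preserved. The associativity, unitality, and braiding coherence hexagons then reduce to the symmetric-monoidal compatibilities already packaged into the synergy monad (Definition~\ref{def:synergy-monad}) and the operadic equivariance of Proposition~\ref{prop:operad-structure}, yielding the faithful functor $\mathcal{R}^{\otimes}$.

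For Part~(2) I would first pin down operational equivalence: $\Phi \sim \Psi$ precisely when they induce identical evaluations on every $\mathbf{S}$-generated composite, which is equivalent to the equality of structure maps $\alpha_\Phi = \alpha_\Psi$. Descent of $\mathcal{R}$ to $\overline{\mathcal{R}}$ on $\mathbf{QProc}_{\!\sim}$ is then formal, and full faithfulness on the quotient follows because passing to $\sim$-classes identifies exactly those processes inducing equal algebra structure, so that hom-sets in $\mathbf{QProc}_{\!\sim}$ correspond bijectively to $T_{\mathbf{S}}$-algebra homomorphisms.

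The main obstacle is essential surjectivity of $\overline{\mathcal{R}}$: every $T_{\mathbf{S}}$-algebra $(V,\alpha)$ must arise, up to operational equivalence, from a genuine CPTP map. Here I would use the universality and state-separation hypotheses through Proposition~\ref{prop:quantum-universality-conditions} and Corollary~\ref{cor:universal-operads-quantum-complete}, which guarantee that any channel is realizable as an interpreted $\mathbf{S}$-composite, together with Lemma~\ref{lemma:stinespring-expressiveness}, which supplies for each state an operadic element whose evaluation reproduces the channel via a Stinespring dilation. The subtlety flagged in Remark~\ref{rem:cp-from-algebra-nuance}---that the naive composite $\alpha \circ \eta$ collapses to the identity---forces me to recover the full family $\{\Phi_\alpha^{(s)}\}_{s \in S(n)}$ of operations encoded by $\alpha$ (Definition~\ref{def:cptp-from-algebra}) rather than a single endomorphism, to reconstruct a representative channel from the universal generators, and to verify that it lands in the prescribed $\sim$-class. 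Uniqueness up to operational equivalence then furnishes a quasi-inverse to $\overline{\mathcal{R}}$, completing the equivalence and establishing that $T_{\mathbf{S}}$ supplies a complete, compositional, operadically universal semantics for finite-dimensional quantum processes.
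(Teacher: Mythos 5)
Your overall architecture matches the paper's proof: construct $\mathcal{R}$ on objects and morphisms, check the homomorphism square on generators, refine monoidally via the monad strength, define operational equivalence as equality of structure maps, and obtain essential surjectivity from Lemma~\ref{lemma:stinespring-expressiveness}. But there is a genuine gap at the very first step: you type the structure map as $\alpha_\Phi : T_{\mathbf{S}}(B) \to B$, whereas the paper's construction is $\alpha_\Phi : T_{\mathbf{S}}(A) \to B$, defined on generators by $\alpha_\Phi\bigl([(\mathcal{C},\psi)]\bigr) := \Phi\bigl(\llbracket \mathcal{C}\rrbracket(\psi)\bigr)$ with $\psi \in A^{\otimes n}$. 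This difference is not cosmetic. With domain $T_{\mathbf{S}}(B)$, the only available construction is the interpretation-induced algebra of Definition~\ref{def:algebra-from-cptp} applied to $B$, and that object does not depend on $\Phi$ at all: the domain algebra $A$ never appears, so there is no slot into which $\Phi$ can be inserted. Consequently (i) your functor sends every channel with codomain $B$ to the same algebra, so faithfulness cannot be recovered the way you claim; (ii) your verification of the law $g \circ \alpha_\Phi = \alpha_\Psi \circ T_{\mathbf{S}}(g)$ never uses the intertwining relation $g \circ \Phi = \Psi \circ f$ --- indeed it cannot, since $\Phi$ occurs on neither side --- whereas the paper's verification runs through $g(\Phi(\llbracket\mathcal{C}\rrbracket(\psi))) = \Psi(f(\llbracket\mathcal{C}\rrbracket(\psi)))$ and lands on $\alpha_\Psi \circ T_{\mathbf{S}}(f)$, with $f$ (not $g$) appearing on the input side; and (iii) in Part~(2), your criterion ``$\Phi \sim \Psi$ iff $\alpha_\Phi = \alpha_\Psi$'' becomes vacuous under your typing: any two channels with the same codomain are identified, $\mathbf{QProc}_{\!\sim}$ collapses, and the claimed equivalence with $T_{\mathbf{S}}\text{-}\mathbf{Alg}$ fails.

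The repair is to adopt the paper's ``relative'' typing: the algebra attached to $\Phi : A \to B$ must consume formal circuits with inputs in $A$ and output the result of pushing the evaluated circuit through $\Phi$. The paper is explicit (Remark~\ref{rem:monadicity-relationship}) that these are not strict Eilenberg--Moore algebras --- the unit law gives $\alpha_\Phi \circ \eta_A = \Phi$, not an identity --- and that this mismatch is precisely what the quotient by operational equivalence absorbs. Your instinct to use honest EM algebras $T_{\mathbf{S}}(B) \to B$ is understandable, since that is the standard typing, but it discards exactly the data the representation is supposed to encode. The remainder of your outline (monoidal refinement via the strength, essential surjectivity via Stinespring expressiveness and universality) is consistent with the paper's Steps~3 and~4 and would go through once the typing is corrected.
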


\begin{proof}[Proof of Theorem~\ref{thm:monoidal-monadic-representation}]
We divide the proof into four steps.

\medskip
\noindent
\textbf{Step 1. Construction of the representation functor $\mathcal{R}$.}

For each CPTP map $\Phi : A \to B$, define the $T_{\mathbf{S}}$-algebra
\[
\mathcal{R}(\Phi) := (B,\alpha_\Phi),
\]
where the structure map
\[
\alpha_\Phi : T_{\mathbf{S}}(A) \longrightarrow B
\]
is defined on generators $[(\mathcal{C},\psi)]$ by
\[
\alpha_\Phi\bigl([(\mathcal{C},\psi)]\bigr) := \Phi\bigl(\llbracket \mathcal{C} \rrbracket(\psi)\bigr),
\quad
\mathcal{C} \in \mathbf{S}(n), \;\psi \in A^{\otimes n}.
\]

For a morphism $(f,g): \Phi \to \Psi$ in $\mathbf{QProc}$, set
\[
\mathcal{R}(f,g) := g.
\]

\medskip
\noindent
\textbf{Step 2. Functoriality and faithfulness.}

Functoriality follows directly from the definition.
To verify that $\mathcal{R}(f,g)$ is a $T_{\mathbf{S}}$-algebra homomorphism, observe
that for any generator $[(\mathcal{C},\psi)]$,
\begin{align*}
g\bigl(\alpha_\Phi([(\mathcal{C},\psi)])\bigr)
&= g\bigl(\Phi(\llbracket \mathcal{C} \rrbracket(\psi))\bigr) \\
&= \Psi\bigl(f(\llbracket \mathcal{C} \rrbracket(\psi))\bigr) \\
&= \alpha_\Psi\bigl([(\mathcal{C}, f^{\otimes n}(\psi))]\bigr) \\
&= \alpha_\Psi\bigl(T_{\mathbf{S}}(f)([(\mathcal{C},\psi)])\bigr),
\end{align*}
using complete positivity of $f$. Faithfulness follows from the definition of
intertwiners in $\mathbf{QProc}$.

\medskip
\noindent
\textbf{Step 3. Symmetric monoidal refinement.}

Assume $T_{\mathbf S}$ is a strong symmetric monoidal monad and
$\llbracket - \rrbracket$ is symmetric monoidal. Then, by standard results on
strong monoidal monads, $T_{\mathbf S}\text{-}\mathbf{Alg}$ inherits a canonical
symmetric monoidal structure. Tensor products and units are defined via the
monad strength, and the symmetry is inherited from $\mathbf{FdC^*}$.

For CPTP maps $\Phi_i: A_i \to B_i$ $(i=1,2)$, define
\[
\mathcal{R}^{\otimes}(\Phi_1 \otimes \Phi_2)
:= (B_1 \otimes B_2, \alpha_{\Phi_1 \otimes \Phi_2}),
\quad
\alpha_{\Phi_1 \otimes \Phi_2}([(\mathcal C,\psi)]) := (\Phi_1 \otimes \Phi_2)(\llbracket \mathcal C \rrbracket(\psi)).
\]
Then the symmetric monoidality of $\llbracket - \rrbracket$ and the strength of $T_{\mathbf S}$ ensure
\[
\alpha_{\Phi_1 \otimes \Phi_2} = (\alpha_{\Phi_1} \otimes \alpha_{\Phi_2}) \circ \mathrm{st}_{A_1,A_2},
\]
so that $\mathcal{R}^{\otimes}(\Phi_1 \otimes \Phi_2)$ coincides with the tensor product of algebras
$\mathcal{R}(\Phi_1) \otimes \mathcal{R}(\Phi_2)$. Functoriality extends similarly.

\medskip
\noindent
\textbf{Step 4. Operational equivalence and essential surjectivity.}

Define operational equivalence by
\[
\Phi \sim \Psi \quad \Longleftrightarrow \quad \alpha_\Phi = \alpha_\Psi,
\]
and let $\mathbf{QProc}_{\!\sim}$ denote the resulting quotient category.

To show essential surjectivity, let $(A,\alpha)$ be a $T_{\mathbf{S}}$-algebra.
By Lemma~\ref{lemma:stinespring-expressiveness}, for every CPTP map
$\Phi: A \to B$ and for all $\rho \in A$, there exists a generator $[(\mathcal C_\rho, \psi_\rho)] \in T_{\mathbf{S}}(A)$
such that
\[
\alpha([(\mathcal C_\rho, \psi_\rho)]) = \Phi(\rho).
\]
Since any element of $T_{\mathbf{S}}(A)$ is built from such generators via the monad multiplication,
every $T_{\mathbf{S}}$-algebra arises from some quantum process $\Phi$ up to operational equivalence.
Hence,
\[
\overline{\mathcal{R}} :
\mathbf{QProc}_{\!\sim} \;\simeq\; T_{\mathbf{S}}\text{-}\mathbf{Alg}
\]
is an equivalence of categories.
\end{proof}

\begin{remark}[Monadicity versus Representation]
\label{rem:monadicity-relationship}

Theorem~\ref{thm:monoidal-monadic-representation} establishes a strong
\emph{representational equivalence} between finite-dimensional quantum
processes and algebras over the synergy monad $T_{\mathbf S}$.
However, this equivalence should not be confused with strict
Eilenberg--Moore monadicity at the level of raw CPTP maps.

Indeed, for an ordinary $T_{\mathbf S}$-algebra $(A,\alpha)$, the unit law
requires
\[
\alpha \circ \eta_A = \mathrm{id}_A .
\]
By contrast, for a quantum process $\Phi : A \to B$, the associated
structure map
\[
\alpha_\Phi : T_{\mathbf S}(A) \longrightarrow B
\]
satisfies
\[
\alpha_\Phi \circ \eta_A = \Phi ,
\]
which in general is not an identity morphism unless $\Phi = \mathrm{id}_A$.

Consequently, CPTP maps do not correspond to arbitrary
Eilenberg--Moore algebras over $T_{\mathbf S}$, but rather to
\emph{relative algebras} whose structure map lands in a possibly
distinct codomain.
This mismatch is resolved precisely by passing to the quotient
$\mathbf{QProc}_{\!\sim}$ by operational equivalence, as in
Theorem~\ref{thm:monoidal-monadic-representation}(2), where the unit law
becomes satisfied up to equivalence.

From this perspective, the synergy monad $T_{\mathbf S}$ does not merely
encode algebraic structure, but captures the \emph{operational semantics}
of quantum processes:
composition, tensoring, and observational equivalence are fully
characterized by its Eilenberg--Moore category.
In this sense, $T_{\mathbf S}$ provides a genuine monadic semantics for
quantum processes \emph{up to operational indistinguishability}.
\end{remark}

\begin{corollary}[Operational--Algebraic Duality for Quantum Computation]
\label{cor:quantum-duality}

Let $\mathbf{QProc}_{\!\sim}$ denote the category of finite-dimensional
quantum processes modulo operational equivalence, and let
$T_{\mathbf S}\text{-}\mathbf{Alg}$ be the Eilenberg--Moore category of the
synergy monad.
Then Theorem~\ref{thm:monoidal-monadic-representation} yields the following
fundamental dualities.

\begin{enumerate}
    \item \textbf{Duality of descriptions.}
    Every quantum process admits an equivalent
    \emph{operational description} as a CPTP map and an
    \emph{algebraic description} as a $T_{\mathbf S}$-algebra,
    with the two descriptions related by an equivalence of categories
    \[
    \mathbf{QProc}_{\!\sim} \;\simeq\; T_{\mathbf S}\text{-}\mathbf{Alg}.
    \]

    \item \textbf{Monadic encoding of compositionality.}
    The compositional structure of quantum computation is fully captured
    by the monad $T_{\mathbf S}$:
    \begin{itemize}
        \item the unit $\eta$ embeds elementary systems into the space of
        formal computations;
        \item the multiplication $\mu$ encodes substitution and sequential
        composition of composite processes;
        \item the algebra axioms ensure coherent interaction of composition
        and parallelization.
    \end{itemize}

    \item \textbf{Categorical semantics for quantum computation.}
    Quantum computation admits a rigorous categorical semantics in which
    operational equivalence classes of processes are classified by
    universal algebraic structures.
\end{enumerate}
\end{corollary}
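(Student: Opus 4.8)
The plan is to treat this corollary as a structured repackaging of Theorem~\ref{thm:monoidal-monadic-representation}, extracting from it the three asserted dualities. Part (1) requires essentially no new work: the equivalence $\mathbf{QProc}_{\!\sim} \simeq T_{\mathbf{S}}\text{-}\mathbf{Alg}$ is precisely the conclusion $\overline{\mathcal{R}}$ of Theorem~\ref{thm:monoidal-monadic-representation}(2), so I would simply invoke that result. The only point worth restating is why the quotient by operational equivalence is unavoidable: as noted in Remark~\ref{rem:monadicity-relationship}, a genuine Eilenberg--Moore algebra satisfies $\alpha \circ \eta_A = \mathrm{id}_A$, whereas a raw CPTP map gives $\alpha_\Phi \circ \eta_A = \Phi$, which fails unless $\Phi = \mathrm{id}_A$. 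Passing to $\mathbf{QProc}_{\!\sim}$ forces the unit law to hold up to operational indistinguishability, and it is only at this level that $\mathcal{R}$ becomes an equivalence rather than merely faithful.

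For part (2), I would unwind the three monad data against Definition~\ref{def:synergy-monad} and give each a compositional reading. The unit $\eta_V(v) = [\mathbbm{1}; v]$ sends a system element to the trivial ($\mathbbm{1} \in S(1)$) interaction template, which is exactly the embedding of elementary systems into formal computations. The multiplication $\mu_V$ acts on a nested expression $[s; [t_1;\vec{v}_1],\dots,[t_k;\vec{v}_k]]$ by returning $[\gamma(s; t_1,\dots,t_k); \vec{v}_1,\dots,\vec{v}_k]$; since $\gamma$ is operadic substitution, $\mu$ is precisely the algebraic avatar of plugging the outputs of subprocesses into the inputs of an outer process, i.e.\ sequential composition. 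The associativity and unit axioms of $\mu$ and $\eta$, which hold by the operad axioms verified in Proposition~\ref{prop:operad-structure}, then translate into coherence of this substitution, while the strong symmetric monoidal monad structure invoked in Theorem~\ref{thm:monoidal-monadic-representation}(1) supplies parallel composition via the monad strength, so that the algebra axioms guarantee the compatibility of sequential and parallel composition. I would present each of these as a short verification that the formal law reduces to the intended circuit-level statement.

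Part (3) I would obtain as an immediate synthesis: combining the equivalence of (1) with the compositional reading of (2) shows that operational equivalence classes of quantum processes are classified by the universal algebraic structures given by $T_{\mathbf{S}}$-algebras, which is the asserted categorical semantics. Since this is purely a consequence, no further argument is needed beyond assembling the two previous parts.

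The main obstacle is not a technical gap but a matter of precision in part (2): the claims that $\mu$ \emph{encodes} substitution and sequential composition and that the algebra axioms \emph{ensure} coherent interaction of composition and parallelization are interpretive, and making them rigorous requires pinning down exactly how the Kleisli--circuit correspondence (Remark~\ref{rem:synergy-monad-intuition}) identifies operadic composition $\gamma$ with diagrammatic plugging, and how the monad strength realizes the tensor product of $\mathbf{S}\text{-}\mathbf{Circuit}$. I expect the bookkeeping that ties the strength to parallel composition---ensuring that the sequential structure ($\mu$) and the parallel structure (strength) satisfy the interchange law expected of a symmetric monoidal category---to be the most delicate point, though it is already underwritten by the strong monoidal monad hypothesis of Theorem~\ref{thm:monoidal-monadic-representation}(1).
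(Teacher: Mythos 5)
Your proposal is correct and follows essentially the same route as the paper: all three parts are obtained by unpacking Theorem~\ref{thm:monoidal-monadic-representation}, with (1) quoting the equivalence $\overline{\mathcal{R}}$ from part (2) of that theorem, (2) reading the unit, multiplication, and algebra axioms of $T_{\mathbf{S}}$ compositionally (the paper phrases this via $\alpha \circ \eta_A \sim \mathrm{id}_A$ and $\alpha \circ \mu_A = \alpha \circ T_{\mathbf{S}}(\alpha)$), and (3) assembled as a consequence. The only cosmetic difference is that the paper justifies (3) by explicitly invoking the symmetric monoidal refinement of Theorem~\ref{thm:monoidal-monadic-representation}(1) to transfer categorical constructions between $T_{\mathbf{S}}\text{-}\mathbf{Alg}$ and $\mathbf{QProc}_{\!\sim}$, whereas you synthesize (1) and (2); this does not affect correctness.
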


\begin{proof}

Each statement is a direct consequence of
Theorem~\ref{thm:monoidal-monadic-representation}.

\medskip
\noindent\textbf{(1) Duality of descriptions.}
Part~(2) of the theorem establishes an equivalence of categories
\[
\mathbf{QProc}_{\!\sim}
\;\simeq\;
T_{\mathbf S}\text{-}\mathbf{Alg},
\]
realized by mutually quasi-inverse functors $\mathcal R$ and $\mathcal L$.
Under this equivalence,
a CPTP map $\Phi$ is represented by the algebra
$(B,\alpha_\Phi)$, while any algebra $(A,\alpha)$ determines,
up to operational equivalence, a unique quantum process
$\Phi_\alpha$.
Thus operational and algebraic descriptions carry the same information.

\medskip
\noindent\textbf{(2) Monadic encoding of compositionality.}
By construction, the object $T_{\mathbf S}(A)$ represents all formal
computations built from the basic operations in $\mathbf S$ with inputs in
$A$.
The unit $\eta_A$ corresponds to trivial (one-step) computations, while
the multiplication $\mu_A$ collapses nested computations into a single
composite.
For any algebra $(A,\alpha)$, the algebra axioms
\[
\alpha \circ \eta_A \;\sim\; \mathrm{id}_A,
\qquad
\alpha \circ T_{\mathbf S}(\alpha)
=
\alpha \circ \mu_A
\]
express, respectively, the operational triviality of identities and the
associativity of composition, up to operational equivalence.
Hence the entire compositional structure of quantum computation is
monadically encoded.

\medskip
\noindent\textbf{(3) Categorical semantics.}
Since the equivalence is symmetric monoidal
(Theorem~\ref{thm:monoidal-monadic-representation}(1)),
categorical constructions in
$T_{\mathbf S}\text{-}\mathbf{Alg}$—such as free algebras,
tensor products, and universal morphisms—transfer faithfully to
operational constructions on quantum processes.
Consequently, diagrammatic and universal arguments in the algebraic
setting yield sound and complete reasoning principles for quantum
computation.

What can we say from here? Families of quantum processes can be combined by passing
to their corresponding algebras, performing categorical constructions in
$T_{\mathbf S}\text{-}\mathbf{Alg}$, and transporting the result back to
$\mathbf{QProc}_{\!\sim}$.
\end{proof}

\begin{corollary}[Hilbert-Space Realization of the Synergy Monad]
\label{cor:hilbert-transfer}

Let $\mathbf{FdC^*}_{\mathrm{CPTP}}$ denote the symmetric monoidal category of
finite-dimensional $C^*$-algebras and completely positive trace-preserving
(CPTP) maps in the Heisenberg picture, and let
$\mathbf{CPTP}_{\mathrm{Hilb}}$ denote the symmetric monoidal category of
finite-dimensional Hilbert spaces and CPTP maps in the Schrödinger picture.

Under the standard finite-dimensional duality between the Heisenberg and
Schrödinger formulations of quantum mechanics,
\[
\mathbf{FdC^*}_{\mathrm{CPTP}}
\;\simeq\;
\mathbf{CPTP}_{\mathrm{Hilb}},
\]
the synergy monad $T_{\mathbf S}$ transports to a strong symmetric monoidal
monad
\[
T_{\mathbf S}^{\mathrm{Hilb}}
\;:=\;
\mathcal{D} \circ T_{\mathbf S} \circ \mathcal{D}^{-1}
\]
on $\mathbf{CPTP}_{\mathrm{Hilb}}$.

Consequently, the equivalence of
Theorem~\ref{thm:monoidal-monadic-representation} induces a categorical
equivalence
\[
\mathbf{QProc}_{\!\sim}^{\mathrm{Hilb}}
\;\simeq\;
T_{\mathbf S}^{\mathrm{Hilb}}\text{-}\mathbf{Alg},
\]
where $\mathbf{QProc}_{\!\sim}^{\mathrm{Hilb}}$ denotes the category of
finite-dimensional quantum processes in the Hilbert-space formalism, modulo
operational equivalence induced by the operad $\mathbf S$.

In particular, the operadic--monadic semantics of quantum processes is
independent of the choice between $C^*$-algebraic and Hilbert-space
representations.
\end{corollary}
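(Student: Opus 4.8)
The plan is to prove this entirely by \emph{transport of structure} along the symmetric monoidal equivalence $\mathcal{D}$, reducing every claim to the corresponding statement on the $C^*$-side already established in Theorem~\ref{thm:monoidal-monadic-representation}. First I would fix a symmetric monoidal equivalence $\mathcal{D} : \mathbf{FdC^*}_{\mathrm{CPTP}} \to \mathbf{CPTP}_{\mathrm{Hilb}}$ together with a chosen quasi-inverse $\mathcal{D}^{-1}$ and the natural isomorphisms $\varepsilon : \mathcal{D}\mathcal{D}^{-1} \Rightarrow \mathrm{id}$ and $\delta : \mathrm{id} \Rightarrow \mathcal{D}^{-1}\mathcal{D}$; concretely this is the duality supplied by Proposition~\ref{prop:Hilb-Cstar-equivalence} (the assignment $H \mapsto \mathcal{B}(H)$ and its quasi-inverse), which that proposition already certifies to be a strong symmetric monoidal equivalence. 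Everything downstream then depends only on $\mathcal{D}$ being such an equivalence, not on the particular model.

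Next I would verify that $T_{\mathbf S}^{\mathrm{Hilb}} := \mathcal{D} \circ T_{\mathbf S} \circ \mathcal{D}^{-1}$ is a monad. Its unit and multiplication are the conjugates $\eta^{\mathrm{Hilb}} := \mathcal{D}\,\eta\,\mathcal{D}^{-1}$ and $\mu^{\mathrm{Hilb}} := \mathcal{D}\,\mu\,\mathcal{D}^{-1}$ (suitably whiskered, inserting $\varepsilon$ to collapse the middle $\mathcal{D}^{-1}\mathcal{D}$), and the associativity and unit laws follow from those for $(T_{\mathbf S}, \eta, \mu)$ purely by functoriality of $\mathcal{D}, \mathcal{D}^{-1}$ together with the naturality and triangle identities of $\varepsilon, \delta$. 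For the strong symmetric monoidal claim I would transport the strength $\mathrm{st}$ of $T_{\mathbf S}$ along the monoidal coherence isomorphisms of $\mathcal{D}$ and $\mathcal{D}^{-1}$; since $\mathcal{D}$ is symmetric monoidal, the conjugated strength again satisfies the strength coherence diagrams, so $T_{\mathbf S}^{\mathrm{Hilb}}$ is a strong symmetric monoidal monad. This step is the standard conjugation of a monad by a monoidal equivalence, and I would invoke it rather than expand every diagram.

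I would then use the general principle that a monad equivalence induces an equivalence of Eilenberg--Moore categories: the assignment $(X, a : T_{\mathbf S} X \to X) \mapsto (\mathcal{D}X,\ \mathcal{D}(a)\circ(\text{coherence iso}) : T_{\mathbf S}^{\mathrm{Hilb}}(\mathcal{D}X) \to \mathcal{D}X)$ is functorial, respects the algebra laws by naturality, and is an equivalence with quasi-inverse built from $\mathcal{D}^{-1}$. Hence $T_{\mathbf S}\text{-}\mathbf{Alg} \simeq T_{\mathbf S}^{\mathrm{Hilb}}\text{-}\mathbf{Alg}$, and this equivalence is again symmetric monoidal because $\mathcal{D}$ is. In parallel I would check that $\mathcal{D}$ lifts to an equivalence $\mathbf{QProc}_{\!\sim} \simeq \mathbf{QProc}_{\!\sim}^{\mathrm{Hilb}}$ of the process categories, since objects (CPTP maps) and intertwiners transport under the CPTP-preserving equivalence $\mathcal{D}$. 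Composing $\mathbf{QProc}_{\!\sim}^{\mathrm{Hilb}} \simeq \mathbf{QProc}_{\!\sim} \simeq T_{\mathbf S}\text{-}\mathbf{Alg} \simeq T_{\mathbf S}^{\mathrm{Hilb}}\text{-}\mathbf{Alg}$ yields the stated equivalence, and the final independence assertion is the observation that both chains produce the same semantics up to coherent isomorphism.

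The main obstacle I anticipate is the \emph{compatibility of operational equivalence with the duality}: the relation $\sim$ is defined by agreement of the operadic interpretation $\llbracket - \rrbracket$ on all composite circuits generated by $\mathbf{S}$, so descending $\mathcal{D}$ to the quotients requires that $\mathcal{D}$ intertwine the $C^*$-side interpretation with its Hilbert-space counterpart, i.e.\ that $\llbracket - \rrbracket^{\mathrm{Hilb}} \cong \mathcal{D} \circ \llbracket - \rrbracket$ as symmetric monoidal functors out of $\mathbf{S}\text{-}\mathbf{Circuit}$. Establishing this naturality — so that $\Phi \sim \Psi$ on the $C^*$-side holds if and only if $\mathcal{D}\Phi \sim \mathcal{D}\Psi$ on the Hilbert side, making the quotient functor well defined and full — is the one genuinely content-bearing point; the monad-conjugation and Eilenberg--Moore transport arguments above are formal once it is in place.
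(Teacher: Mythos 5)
Your proposal is correct and follows essentially the same route as the paper's proof: fix the symmetric monoidal equivalence $\mathcal{D}$ (Proposition~\ref{prop:Hilb-Cstar-equivalence}), conjugate the monad to obtain $T_{\mathbf S}^{\mathrm{Hilb}} = \mathcal{D}\circ T_{\mathbf S}\circ\mathcal{D}^{-1}$, transport Eilenberg--Moore algebras along the equivalence, and compose with Theorem~\ref{thm:monoidal-monadic-representation}. You in fact go one step further than the paper: the compatibility of operational equivalence with the duality (that $\llbracket - \rrbracket^{\mathrm{Hilb}} \cong \mathcal{D}\circ\llbracket - \rrbracket$, hence $\mathbf{QProc}_{\!\sim} \simeq \mathbf{QProc}_{\!\sim}^{\mathrm{Hilb}}$) is precisely the step the paper's proof leaves implicit when it passes from $\mathbf{QProc}_{\!\sim}\simeq T_{\mathbf S}\text{-}\mathbf{Alg}$ to the stated Hilbert-space equivalence, so identifying and verifying it strengthens the argument.
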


\begin{proof}
The proof proceeds by transport of structure along the standard
finite-dimensional equivalence between the Heisenberg and Schrödinger
pictures of quantum mechanics.

\medskip
\noindent
\textbf{Step 1: Symmetric monoidal equivalence of formalisms.}

There is a well-known symmetric monoidal equivalence
\[
\mathcal{D} :
\mathbf{FdC^*}_{\mathrm{CPTP}}
\;\simeq\;
\mathbf{CPTP}_{\mathrm{Hilb}},
\]
defined as follows:
\begin{itemize}
  \item On objects, $\mathcal{D}$ assigns to each finite-dimensional
  $C^*$-algebra $A \cong \bigoplus_i M_{n_i}(\mathbb{C})$ a Hilbert space
  $H_A := \bigoplus_i \mathbb{C}^{n_i} \otimes \mathbb{C}^{n_i}$, equipped with
  the Hilbert--Schmidt inner product.
  \item On morphisms, a CPTP map $\Phi : A \to B$ in the Heisenberg picture is
  sent to its Schrödinger-picture dual
  $\mathcal{D}(\Phi) : \mathcal{S}(H_A) \to \mathcal{S}(H_B)$, uniquely
  determined by the relation
  \[
  \mathrm{Tr}\!\left[\rho \,\Phi(X)\right]
  =
  \mathrm{Tr}\!\left[\mathcal{D}(\Phi)(\rho)\, X\right]
  \quad
  \text{for all } \rho \in \mathcal{S}(H_A),\; X \in B.
  \]
\end{itemize}
This equivalence preserves tensor products, unit objects, and complete
positivity, since
$\mathcal{B}(H_1) \otimes \mathcal{B}(H_2) \cong \mathcal{B}(H_1 \otimes H_2)$.

\medskip
\noindent
\textbf{Step 2: Transport of the synergy monad.}

Let
\[
T_{\mathbf S} :
\mathbf{FdC^*}_{\mathrm{CPTP}}
\longrightarrow
\mathbf{FdC^*}_{\mathrm{CPTP}}
\]
be the synergy monad induced by the symmetric quantum interaction operad
$\mathbf S$.
Define a monad on $\mathbf{CPTP}_{\mathrm{Hilb}}$ by conjugation along
$\mathcal{D}$:
\[
T_{\mathbf S}^{\mathrm{Hilb}}
\;:=\;
\mathcal{D} \circ T_{\mathbf S} \circ \mathcal{D}^{-1}.
\]
By general categorical principles, transport along a monoidal equivalence
preserves monadic structure. Since $T_{\mathbf S}$ is a strong symmetric
monoidal monad and $\mathcal{D}$ is a symmetric monoidal equivalence, the
transported monad $T_{\mathbf S}^{\mathrm{Hilb}}$ inherits a canonical strong
symmetric monoidal structure, including its strength and coherence data.

\medskip
\noindent
\textbf{Step 3: Equivalence of algebra categories.}

A standard result in category theory states that an equivalence
$\mathcal{D} : \mathcal{C} \simeq \mathcal{D}$ lifts to an equivalence between
Eilenberg--Moore categories of transported monads. Hence there is a canonical
equivalence
\[
T_{\mathbf S}\text{-}\mathbf{Alg}
\;\simeq\;
T_{\mathbf S}^{\mathrm{Hilb}}\text{-}\mathbf{Alg}.
\]
Combining this equivalence with
Theorem~\ref{thm:monoidal-monadic-representation}, which identifies
$T_{\mathbf S}$-algebras with quantum processes modulo operational equivalence
relative to $\mathbf S$, yields the desired equivalence
\[
\mathbf{QProc}_{\!\sim}^{\mathrm{Hilb}}
\;\simeq\;
T_{\mathbf S}^{\mathrm{Hilb}}\text{-}\mathbf{Alg}.
\]

\medskip
\noindent
This completes the proof.
\end{proof}

\begin{remark}[Finite-Dimensionality]
The above result relies essentially on finite dimensionality. In infinite
dimensions, the equivalence between $C^*$-algebras and Hilbert spaces breaks
down, tensor products become more subtle, and additional topological issues
arise. Extending the operadic--monadic semantics to infinite-dimensional
quantum systems would require substantially different techniques.
\end{remark}

\begin{remark}[Representation Independence]
Corollary~\ref{cor:hilbert-transfer} shows that the operadic--monadic
semantics provided by the synergy monad $T_{\mathbf S}$ is
\emph{representation independent} in finite dimensions.
Although the construction is most naturally formulated in the
$C^*$-algebraic (Heisenberg) picture—where composition and tensor
products admit a particularly clean algebraic treatment—the resulting
semantic structure transports faithfully to the Hilbert-space
(Schrödinger) picture via a symmetric monoidal equivalence.

In particular:
\begin{itemize}
    \item The notion of operational equivalence induced by the operad
    $\mathbf S$ is invariant under passage between Heisenberg and
    Schrödinger representations.
    \item The symmetric monoidal and compositional structure encoded by
    the synergy monad $T_{\mathbf S}$ is preserved up to canonical
    monoidal equivalence.
    \item The resulting algebraic characterization of quantum processes
    captures intrinsic operational behavior, rather than artifacts of a
    particular mathematical representation.
\end{itemize}

Therefore, what can we infer from here? The synergy monad provides a unified and representation-agnostic
semantic framework bridging categorical quantum mechanics, operadic
circuit descriptions, and standard Hilbert-space quantum information
theory.
\end{remark}

\section{Operadic Ideals and No-Go Theorems}\label{sec:Operadic Ideals and No-Go Theorems}


%

The no-cloning theorem is a fundamental constraint in quantum mechanics that prevents the perfect copying of unknown quantum states. In our operadic framework, we can encode this physical limitation as an \emph{operadic ideal} that systematically excludes cloning operations from the space of allowed quantum processes. The \emph{No-Cloning Ideal} $I_{\mathrm{noclone}}$ has to be defined first and this is defined by Definition~\ref{def:noclone-ideal}.

\begin{definition}[Operadic Ideal $I_{\mathrm{noclone}}$]
\label{def:noclone-ideal}

Let $\mathbf{S}$ be the quantum interaction operad (Definition~\ref{def:quantum-interaction-operad}). 
An \emph{operadic ideal} $I$ in $\mathbf{S}$ is a family of subsets 
\[
I(X_1, \dots, X_n; Y) \subseteq S(X_1, \dots, X_n; Y)
\]
for all finite sequences of input Hilbert spaces $(X_1, \dots, X_n)$ and output Hilbert space $Y$, satisfying:

\begin{enumerate}[label=(\roman*)]
    \item \textbf{Closure under operadic composition:} 
    If $f \in I(X_1, \dots, X_n; Y)$ and $g_i \in S(Y_{i1}, \dots, Y_{ik_i}; X_i)$ for all $i = 1, \dots, n$, then
    \[
    \gamma(f; g_1, \dots, g_n) \in I(Y_{11}, \dots, Y_{1k_1}, \dots, Y_{n1}, \dots, Y_{nk_n}; Y).
    \]
    
    \item \textbf{Closure under pre-composition:} 
    If $f \in I(X_1, \dots, X_n; Y)$ and $h \in S(Y, Z_1, \dots, Z_m; W)$, then
    \[
    \gamma(h; f, \mathrm{id}_{Z_1}, \dots, \mathrm{id}_{Z_m}) \in I(X_1, \dots, X_n, Z_1, \dots, Z_m; W).
    \]
    
    \item \textbf{Closure under symmetric action:} 
    If $f \in I(X_1, \dots, X_n; Y)$ and $\sigma \in \Sigma_n$, then
    \[
    \sigma \cdot f \in I(X_{\sigma^{-1}(1)}, \dots, X_{\sigma^{-1}(n)}; Y).
    \]
\end{enumerate}

The \emph{No-Cloning Ideal} $I_{\mathrm{noclone}}$ is defined as the largest ideal in $\mathbf{S}$ that does \emph{not} contain any element which, in any context, can be composed to form a perfect cloning map. More precisely:

For any Hilbert space $H$, let $\mathrm{Clone}_H \in S(H; H \otimes H)$ be the perfect quantum cloning map defined by
\[
\mathrm{Clone}_H(\psi) = \psi \otimes \psi \quad \text{for all } \psi \in H.
\]

Then $I_{\mathrm{noclone}}$ is the maximal ideal such that for all $f \in I_{\mathrm{noclone}}$ and all contexts (compositions with other $\mathbf{S}$-elements), the resulting map cannot equal $\mathrm{Clone}_H$ for any $H$.

Equivalently, $I_{\mathrm{noclone}}$ contains all quantum operations that are \emph{compositionally orthogonal} to perfect cloning: no sequence of compositions involving elements of $I_{\mathrm{noclone}}$ can ever produce an exact cloning operation.
\end{definition}

This definition takes a \emph{top-down} approach to characterizing the no-cloning constraint:
\begin{itemize}
    \item \textbf{Maximality:} By taking the \emph{largest} ideal that avoids cloning, we ensure we're excluding the minimal necessary set of operations while still preventing cloning. This is physically natural - we want to allow as many quantum operations as possible, just not those that enable cloning.
    
    \item \textbf{Compositional safety:} The ideal structure guarantees that if we start with non-cloning operations and compose them in any way (sequentially, in parallel, with permutations), we can never accidentally produce a cloning operation. This is crucial for building complex quantum protocols safely.
    
    \item \textbf{Context independence:} The condition "in any context" ensures that an operation is excluded if it \emph{could} be used for cloning when composed with other operations, even if it doesn't clone by itself. This prevents hidden cloning capabilities.
\end{itemize}

This formulation aligns with the physical intuition that the no-cloning theorem isn't just about individual operations, but about what can be achieved through compositional building blocks.

\begin{example}[A Concrete Operadic Realization of the No-Cloning Ideal]
\label{ex:concrete-noclone}

We construct a concrete quantum interaction operad \( S \) and an operadic ideal \( I_{\mathrm{noisy}} \) that illustrates how the abstract No-Cloning Ideal \( I_{\mathrm{noclone}} \) from Definition~\ref{def:noclone-ideal} can be realized in a physically meaningful model.

\vspace{0.5em}
\noindent
\textbf{Operad \( S \): CPTP Maps.}
\begin{itemize}
    \item \textbf{Objects:} Finite-dimensional Hilbert spaces (e.g., \( \mathbb{C}^2 \), \( \mathbb{C}^2 \otimes \mathbb{C}^2 \)).
    \item \textbf{Operations:} \( S(X_1, \dots, X_n; Y) \) is the set of all \emph{Completely Positive Trace-Preserving (CPTP)} maps from \( \mathcal{L}(X_1 \otimes \dots \otimes X_n) \) to \( \mathcal{L}(Y) \). These represent all physically admissible quantum operations.
    \item \textbf{Composition \( \gamma \):} Given \( f \in S(X_1, \dots, X_n; Y) \) and \( g_i \in S(Y_{i1}, \dots, Y_{ik_i}; X_i) \), the composition \( \gamma(f; g_1, \dots, g_n) \) is the CPTP map formed by connecting the outputs of \( g_1, \dots, g_n \) to the respective inputs of \( f \).
    \item \textbf{Identity:} \( \mathrm{id}_X \) is the identity CPTP map on \( \mathcal{L}(X) \).
\end{itemize}

\vspace{0.5em}
\noindent
\textbf{The Ideal \( I_{\mathrm{noisy}} \).}
We define \( I_{\mathrm{noisy}} \) as the family of all CPTP maps that are \emph{non-isometric}. Formally:
\[
I_{\mathrm{noisy}}(X_1, \dots, X_n; Y) = \{ f \in S(X_1, \dots, X_n; Y) \mid f \text{ is \emph{not} an isometry} \}.
\]
This ideal contains all noisy channels (e.g., depolarizing channels, amplitude damping) and information-discarding operations (e.g., the trace map \( \mathrm{Tr}(\rho) \)).

\vspace{0.5em}
\noindent
\textbf{Verification that \( I_{\mathrm{noisy}} \) is an operadic ideal.}
\begin{enumerate}[label=(\roman*)]
    \item \textbf{Closure under operadic composition:} 
    If \( f \) is non-isometric and \( g_i \) are arbitrary CPTP maps, then \( \gamma(f; g_1, \dots, g_n) \) remains non-isometric. Composition cannot reduce the Kraus rank to 1.
    
    \item \textbf{Closure under pre-composition:} 
    If \( f \in I_{\mathrm{noisy}}(X_1, \dots, X_n; Y) \) and \( h \in S(Y, Z_1, \dots, Z_m; W) \), then
    \[
    \gamma(h; f, \mathrm{id}_{Z_1}, \dots, \mathrm{id}_{Z_m}) \in I_{\mathrm{noisy}}(X_1, \dots, X_n, Z_1, \dots, Z_m; W).
    \]
    Pre-composition with a non-isometric map cannot produce an isometry.
    
    \item \textbf{Closure under symmetric action:} 
    If \( f \in I_{\mathrm{noisy}}(X_1, \dots, X_n; Y) \) and \( \sigma \in \Sigma_n \), then
    \[
    \sigma \cdot f \in I_{\mathrm{noisy}}(X_{\sigma^{-1}(1)}, \dots, X_{\sigma^{-1}(n)}; Y).
    \]
    Permuting inputs preserves the non-isometric property.
\end{enumerate}

Thus \( I_{\mathrm{noisy}} \) is indeed an operadic ideal in \( S \).

\vspace{0.5em}
\noindent
\textbf{Relation to the No-Cloning Ideal \( I_{\mathrm{noclone}} \).}

In this operad, all operations are linear CPTP maps. The perfect cloning operation
\[
\mathrm{Clone}_H(\psi) = \psi \otimes \psi \quad \text{for all } \psi \in H
\]
is \emph{nonlinear} and therefore does not lie in \( S \). Consequently, \emph{no} sequence of compositions within \( S \) can produce \( \mathrm{Clone}_H \) for any Hilbert space \( H \).

The ideal \( I_{\mathrm{noisy}} \) provides a concrete example of an operadic ideal satisfying the no-cloning constraint:
\begin{itemize}
    \item Elements of \( I_{\mathrm{noisy}} \) cannot be composed to yield cloning.
    \item Even the complement (isometries) cannot clone, since they are still linear operations.
    \item The ideal structure ensures compositional safety: no sequence of operations from \( I_{\mathrm{noisy}} \) can accidentally produce a cloning capability.
\end{itemize}

Therefore, \( I_{\mathrm{noisy}} \subseteq I_{\mathrm{noclone}} \), demonstrating how a physically meaningful ideal naturally satisfies the conditions of the abstract No-Cloning Ideal while excluding only noisy, information-lossy operations. $\Box$
\end{example}

While Definition~\ref{def:noclone-ideal} provides an elegant top-down characterization, for constructive purposes and mathematical analysis, it is useful to have a \emph{generative} description that explicitly identifies the problematic operations. This notion is formalized by Definition~\ref{def:noclone-ideal-generated}.

\begin{definition}[No-Cloning Ideal via Generation]
\label{def:noclone-ideal-generated}

Let $\mathbf{S}$ be the quantum interaction operad of CPTP maps. 

For each Hilbert space $H$, let $\zeta_H : H \to H \otimes H$ denote the 
\emph{formal cloning operation} defined by $\zeta_H(\psi) = \psi \otimes \psi$ 
for all pure states $\psi \in H$. Note that $\zeta_H$ is \emph{not} a CPTP map 
(it is nonlinear) and therefore does not belong to $\mathbf{S}$.

Define $\mathcal{F}_{\mathrm{clone}}$ to be the set of all $f \in S$ for which 
there exists an operadic context $C[-]$ (a composition pattern built from 
elements of $\mathbf{S}$) such that the composite operation $C[f]$, when 
extended to act on states, coincides with $\zeta_H$ on their common domain.

The \emph{cloning-generated ideal} $J_{\mathrm{clone}} \subseteq S$ is the 
intersection of all operadic ideals containing $\mathcal{F}_{\mathrm{clone}}$. 
Equivalently, $J_{\mathrm{clone}}$ is the smallest ideal containing every 
operation that could participate in a cloning composite.

The \emph{No-Cloning Ideal} is then
\[
I_{\mathrm{noclone}} = S \setminus J_{\mathrm{clone}},
\]
which consists of all operations that are \emph{compositionally orthogonal} to 
cloning: no context built from $I_{\mathrm{noclone}}$ can produce the cloning 
pattern $\zeta_H$.
\end{definition}

\begin{remark}[Equivalence of Definitions]
Definition~\ref{def:noclone-ideal} and Definition~\ref{def:noclone-ideal-generated} provide complementary characterizations of the same ideal:
\begin{itemize}
    \item Definition~\ref{def:noclone-ideal} describes $I_{\mathrm{noclone}}$ as the \emph{largest} ideal avoiding cloning (top-down, semantic approach)
    \item Definition~\ref{def:noclone-ideal-generated} describes $I_{\mathrm{noclone}}$ as the complement of the \emph{smallest} ideal containing cloning-shaped operations (bottom-up, syntactic approach)
\end{itemize}
This duality reflects the fundamental fact that in quantum mechanics, the impossibility of cloning is both a \emph{global constraint} on the entire space of operations and a \emph{local constraint} that can be traced to specific problematic patterns.
\end{remark}

\begin{lemma}
If $\mathbf{S}$ is an operad of linear CPTP maps, then no element of $\mathbf{S}$ can realize the formal generator $\zeta_H$. Therefore, $J_{\mathrm{clone}}$ is a proper ideal, and $I_{\mathrm{noclone}}$ is non-empty.
\end{lemma}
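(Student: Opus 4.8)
The plan is to reduce the entire statement to a single structural fact: \emph{linearity is preserved under operadic composition, whereas $\zeta_H$ is irreducibly nonlinear.} First I would recall that, by hypothesis, every generator of $\mathbf{S}$ is a linear CPTP map, and that each of the three ways of forming composites in the operad — operadic composition $\gamma$, pre-composition, and the symmetric action — sends tuples of linear maps to linear maps. Consequently, for any $f \in S$ and any operadic context $C[-]$ built from elements of $\mathbf{S}$, the filled composite $C[f]$ is again a linear map on the relevant operator spaces.

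Next I would show that $\zeta_H$ cannot agree with any linear map on its domain. Writing $\zeta_H(\psi)=\psi\otimes\psi$ and testing on a superposition $\psi=\alpha\phi_0+\beta\phi_1$ of orthonormal vectors, the output contains the cross terms $\alpha\beta(\phi_0\otimes\phi_1+\phi_1\otimes\phi_0)$, which depend quadratically on the coefficients; no linear assignment $\psi\mapsto L(\psi)$ can reproduce this dependence, and the same obstruction persists after extending to density operators, since $(|\psi\rangle\langle\psi|)^{\otimes2}$ is quadratic in $|\psi\rangle\langle\psi|$. Combining the two observations, no composite $C[f]$ can coincide with $\zeta_H$ on their common domain, so by the very definition of $\mathcal{F}_{\mathrm{clone}}$ in Definition~\ref{def:noclone-ideal-generated} we conclude $\mathcal{F}_{\mathrm{clone}}=\emptyset$.

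From here the remaining claims follow formally. Since $J_{\mathrm{clone}}$ is the intersection of all operadic ideals containing $\mathcal{F}_{\mathrm{clone}}=\emptyset$, and the componentwise-empty family is itself an operadic ideal — the closure axioms (i)--(iii) of Definition~\ref{def:noclone-ideal} hold vacuously — and is contained in every ideal, I would identify $J_{\mathrm{clone}}$ with this empty ideal. Because $\mathbf{S}$ contains the identity operations $\mathrm{id}_X$, we have $S\neq\emptyset$, so $J_{\mathrm{clone}}\subsetneq S$ is proper. Finally, $I_{\mathrm{noclone}}=S\setminus J_{\mathrm{clone}}$ is computed componentwise as $S\setminus\emptyset=S$, which is non-empty.

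I expect the only genuine subtlety — the ``main obstacle'' — to be pinning down precisely what ``$C[f]$ coincides with $\zeta_H$ on their common domain'' means, and confirming that the nonlinearity survives the passage from state vectors to the density-operator formalism in which CPTP maps properly live. Once one fixes the convention that $\zeta_H$ is extended by $|\psi\rangle\langle\psi|\mapsto(|\psi\rangle\langle\psi|)^{\otimes2}$, the quadratic dependence on the input density matrix furnishes an unambiguous linearity obstruction, and the argument closes. The remaining manipulations — that the empty family is an ideal and that the set difference is taken componentwise — are routine bookkeeping.
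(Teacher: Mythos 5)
Your proposal is correct and takes essentially the same approach as the paper's proof: both rest on the single fact that linearity is preserved under operadic composition while $\zeta_H$ is nonlinear, so no composite built from $\mathbf{S}$ can realize it, whence $J_{\mathrm{clone}}$ is proper and $I_{\mathrm{noclone}}$ is non-empty. Your write-up simply makes explicit several details the paper leaves implicit (the cross-term computation witnessing nonlinearity, the identification of $J_{\mathrm{clone}}$ with the empty ideal via $\mathcal{F}_{\mathrm{clone}}=\emptyset$, and the observation that $S\neq\emptyset$ because it contains identities).
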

\begin{proof}
The formal cloning operation $\zeta_H$ is non-linear, while all elements of $\mathbf{S}$ are linear CPTP maps. Since linearity is preserved under operadic composition, no sequence of compositions of elements from $\mathbf{S}$ can produce $\zeta_H$. Therefore, $J_{\mathrm{clone}} \neq S$, making $I_{\mathrm{noclone}}$ non-empty.
\end{proof}

This generative  Definition~\ref{def:noclone-ideal-generated} takes a \emph{bottom-up} approach that complements the previous one:
\begin{itemize}
    \item \textbf{Constructive:} It explicitly identifies the ``seed" operations (those fitting into cloning-shaped contexts) that generate the entire no-cloning constraint. This provides a practical criterion for verification.
    
    \item \textbf{Mathematically natural:} The ``ideal generated by forbidden patterns" construction is standard in algebraic theories and ensures good categorical properties. The use of formal generators $\zeta_H$ avoids the mathematical inconsistency of requiring equality with non-CPTP maps.
    
    \item \textbf{Physically transparent:} By directly referencing the cloning operation $\zeta_H$, the definition makes explicit contact with the physical no-cloning theorem, while properly handling the fact that cloning is not a valid quantum operation.
\end{itemize}

\begin{remark}[Connection to Example~\ref{ex:concrete-noclone}]
In Example~\ref{ex:concrete-noclone}, the ideal $I_{\mathrm{noisy}}$ of non-isometric maps satisfies $I_{\mathrm{noisy}} \subseteq I_{\mathrm{noclone}}$ precisely because no element of $I_{\mathrm{noisy}}$ can participate in any context that would produce cloning. The generative definition provides a method to test whether more permissive ideals still respect the no-cloning principle.
\end{remark}

\begin{theorem}[Cloning-Compatible Extension via Operadic Quotient]
\label{thm:cloning-extension}

Let \( S \) be the quantum interaction operad of CPTP maps, and let 
\( I_{\mathrm{noclone}} \subseteq S \) be the No-Cloning Ideal as defined in 
Definition~\ref{def:noclone-ideal}. 

Consider the two-step construction:
\begin{enumerate}
    \item Form the quotient operad \( Q := S / I_{\mathrm{noclone}} \) by 
          collapsing all operations that are compositionally orthogonal to 
          cloning into trivial equivalence classes.
    \item Formally adjoin, for each Hilbert space \( H \), a generator 
          \( \mathbf{Clone}_H : H \to H \otimes H \) that represents perfect 
          cloning, obtaining the extended operad \( Q[\mathrm{Clone}] \).
\end{enumerate}

Then \( Q[\mathrm{Clone}] \) satisfies:
\begin{enumerate}[label=(\arabic*)]
    \item \textbf{Existence of cloning:} The cloning maps \( \mathbf{Clone}_H \) 
          are well-defined elements of the operad, in contrast to the original 
          operad \( S \) where they are absent (being non-linear and 
          non-CPTP).
    
    \item \textbf{Persistence of compatible structure:} All operadic 
          composition laws from \( S \) that do not rely on the impossibility 
          of cloning for their validity are preserved in \( Q[\mathrm{Clone}] \).
          
    \item \textbf{Consistency:} No contradiction arises from the presence of 
          the cloning operations, because the specific operations that would 
          normally be used to prove the no-cloning theorem have been 
          identified in the quotient \( Q \), breaking the chain of reasoning 
          that leads to the contradiction.
\end{enumerate}
\end{theorem}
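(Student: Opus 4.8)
The plan is to treat the statement as a construction in the category of (colored, symmetric) operads in $\mathbf{Set}$ and to reduce each of the three claims to a universal-property argument, certifying consistency by exhibiting an explicit model rather than by syntactic manipulation. First I would make the quotient $Q := S/I_{\mathrm{noclone}}$ precise. The cleanest device is to define, on each hom-set $S(X_1,\dots,X_n;Y)$, the congruence $\equiv$ that collapses all of $I_{\mathrm{noclone}}(X_1,\dots,X_n;Y)$ to a single trivial class while leaving every other operation rigid, i.e. $f \equiv g$ iff $f = g$ or $f,g \in I_{\mathrm{noclone}}$. The three closure axioms (i)--(iii) of Definition~\ref{def:noclone-ideal} are exactly what is needed to check that $\equiv$ is compatible with operadic composition $\gamma$, with pre-composition, and with the $\Sigma_n$-action, so that $Q$ inherits a well-defined operad structure and the projection $q : S \to Q$ is a morphism of operads.

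Next I would construct $Q[\mathrm{Clone}]$ as the coproduct in the category of operads, $Q[\mathrm{Clone}] := Q \ast \mathcal{F}$, where $\mathcal{F}$ is the free operad generated by the family of formal symbols $\{\mathbf{Clone}_H : H \to H\otimes H\}_H$. The coproduct supplies canonical operad morphisms $\iota : Q \to Q[\mathrm{Clone}]$ and $j : \mathcal{F} \to Q[\mathrm{Clone}]$, and its universal property states that operad maps out of $Q[\mathrm{Clone}]$ correspond to compatible pairs of operad maps out of $Q$ and $\mathcal{F}$. Claim~(1) is then immediate: $\mathbf{Clone}_H = j(\mathbf{Clone}_H)$ is by definition an operation of $Q[\mathrm{Clone}]$, whereas in $S$ no such element exists because, as recorded in the lemma following Definition~\ref{def:noclone-ideal-generated}, cloning is nonlinear and hence cannot be a CPTP map. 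Claim~(2) follows because $\iota$ is an operad morphism: every composition identity valid in $Q$ — in particular every identity inherited from $S$ via $q$ that does not force a relation with the excluded cloning operations — is transported verbatim into $Q[\mathrm{Clone}]$, since a coproduct inclusion never imposes new relations on its summand.

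The substance of the theorem lies in Claim~(3), and here I would certify consistency by building a nontrivial model. The decisive observation is that a \emph{cartesian} monoidal category possesses a canonical diagonal $\Delta_X : X \to X \times X$, which is a genuine copying morphism: classical deterministic processes clone. I would therefore fix a cartesian semantic category $\mathcal{D}$ (for instance $\mathbf{Set}$ with $\times$, or the deterministic core of a category of classical channels) and exhibit an operad algebra $\Theta : Q[\mathrm{Clone}] \to \mathcal{D}$ sending each $\mathbf{Clone}_H$ to the diagonal $\Delta$ and each surviving class of $Q$ to its evident deterministic realization, assembling $\Theta$ from a map on $Q$ and a map on $\mathcal{F}$ via the universal property. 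Because $\Theta$ is a nontrivial representation realizing cloning and the surviving composition laws simultaneously, $Q[\mathrm{Clone}]$ cannot be degenerate, so no contradiction is derivable; concretely, the usual no-cloning argument is a composite that factors through operations trivialized by $I_{\mathrm{noclone}}$, so its chain of reasoning is severed in the quotient before cloning is ever adjoined.

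I expect the main obstacle to be exactly this consistency step, and specifically the need to make the informal phrase ``composition laws that do not rely on the impossibility of cloning'' mathematically precise. The no-cloning obstruction is ultimately a linearity and convexity phenomenon, so the delicate point is to verify that the quotient by $I_{\mathrm{noclone}}$ removes precisely the linearity constraints that clash with $\Delta$ without collapsing the entire operad to a point — which would render consistency vacuous and Claim~(2) empty. Producing the model $\Theta$ in a cartesian $\mathcal{D}$ is what rules out the degenerate outcome, so the genuine work lies in checking that $\Theta$ respects the congruence $\equiv$ (every element placed in $I_{\mathrm{noclone}}$ must be sent to a morphism compatible with collapsing) and that the adjoined diagonals satisfy no hidden relation forced by $Q$.
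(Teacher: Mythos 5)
Your Part~1 (the hom-set-local congruence collapsing each $I_{\mathrm{noclone}}(\vec{X};Y)$ to a single class, with well-definedness checked against the three ideal axioms) coincides with the paper's construction. Your adjunction step genuinely differs: you take the coproduct $Q \ast \mathcal{F}$ with the free operad on the cloning symbols, so that $\mathbf{Clone}_H$ remains a free generator, whereas the paper forms a pushout whose attaching map $\iota$ sends each $\mathbf{Clone}_H$ to the trivial class $[0]\in Q$, thereby gluing the new generators to the collapsed ideal; your reading is the more standard meaning of ``formally adjoin a generator,'' keeps claims (1)--(2) clean via the coproduct universal property, and avoids the awkward consequence of the paper's diagram that $\mathbf{Clone}_H$ ends up equal to the trivial element. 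For claim (3) you also depart from the paper: the paper argues syntactically that the operations used in the standard no-cloning argument have been identified in $Q$, so the contradiction cannot be re-derived, while you propose to certify consistency semantically by exhibiting an algebra $\Theta : Q[\mathrm{Clone}] \to \mathcal{D}$ into a cartesian category sending $\mathbf{Clone}_H$ to the diagonal $\Delta$. If completed, that would be sharper than what the paper proves.

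However, the consistency step has a genuine gap, and it is not merely the ``genuine work'' you defer. First, you never construct $\Theta_Q : Q \to \mathcal{D}$, and that is where all the difficulty sits: any such map must be constant on each collapsed hom-set, i.e.\ it must send every element of $I_{\mathrm{noclone}}(\vec{X};Y)$ to one and the same morphism of $\mathcal{D}$. An ``evident deterministic realization'' of CPTP classes does not have this property, and the only obvious assignment that does is the collapse onto a one-point model, which makes $\Theta$ trivial. Second, and more fundamentally, your closing claim that ``producing the model $\Theta$ is what rules out the degenerate outcome'' is logically inverted: every algebra of $Q[\mathrm{Clone}]$ factors through the quotient, so if $f \equiv g$ in $Q$ then $\Theta(f) = \Theta(g)$ in every model; a model can witness consistency, but it can never certify that the quotient is non-degenerate. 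Degeneracy is a real threat here, not a hypothetical one: since every element of $S$ is a linear CPTP map, no operadic composition of elements of $S$ can realize the nonlinear pattern $\zeta_H$, so under Definition~\ref{def:noclone-ideal} the \emph{largest} ideal compositionally orthogonal to cloning is all of $S$. Hence $Q$ is the terminal operad, claim (2) carries no content, and the consistency your model certifies is the trivial one. To rescue your strategy you would need to first show $I_{\mathrm{noclone}} \subsetneq S$ directly from the definition (which fails as the definitions stand), or work instead with a proper ideal such as $I_{\mathrm{noisy}}$ of Example~\ref{ex:concrete-noclone}, and only then construct a $\Theta_Q$ that separates the surviving classes.
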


\begin{proof}
We proceed in three parts, corresponding to the three claims of the theorem.

\medskip
\noindent
\textbf{Part 1: Construction of the quotient operad \( Q = S / I_{\mathrm{noclone}} \).}

Since the category of CPTP maps is not enriched over abelian groups, we employ a 
set-theoretic quotient construction. For each tuple of Hilbert spaces 
\( \vec{X} = (X_1,\dots,X_n) \) and output Hilbert space \( Y \), define an 
equivalence relation \( \sim_{\vec{X};Y} \) on the hom-set \( S(\vec{X};Y) \) by:
\[
f \sim_{\vec{X};Y} g \quad \text{if and only if} \quad 
\bigl(f = g\bigr) \ \text{or} \ \bigl(f \in I_{\mathrm{noclone}}(\vec{X};Y) 
\text{ and } g \in I_{\mathrm{noclone}}(\vec{X};Y)\bigr).
\]
That is, all elements belonging to the ideal \( I_{\mathrm{noclone}} \) within 
a given hom-set are identified into a single equivalence class, while elements 
outside the ideal remain distinct. The relation is \emph{hom-set-local}: it 
only relates operations with identical arities and input/output types.

The quotient operad \( Q \) has:
\begin{itemize}
    \item \textbf{Objects:} The same Hilbert spaces as \( S \).
    \item \textbf{Operations:} \( Q(\vec{X};Y) = S(\vec{X};Y) / {\sim_{\vec{X};Y}} \).
    \item \textbf{Composition:} Induced from \( S \):
        \[
        \gamma_Q([f]; [g_1], \dots, [g_n]) 
        = [\gamma_S(f; g_1, \dots, g_n)].
        \]
        This is well-defined because \( I_{\mathrm{noclone}} \) is an operadic 
        ideal: if any argument lies in the ideal, the composite lies in the 
        corresponding ideal of the result type.
    \item \textbf{Identities and symmetric action:} 
        \( \mathrm{id}_X^Q = [\mathrm{id}_X^S] \) and 
        \( \sigma \cdot [f] = [\sigma \cdot f] \), again well-defined by the 
        ideal properties.
\end{itemize}

\medskip
\noindent
\textbf{Part 2: Adjoining the cloning generators.}

The perfect cloning operation \( \mathrm{Clone}_H(\psi) = \psi \otimes \psi \) is 
\emph{not} an element of the original operad \( S \), because it is neither 
linear nor completely positive. We therefore formally adjoin it as a new 
generator.

For each Hilbert space \( H \), add a formal symbol 
\( \mathbf{Clone}_H \) to the operad \( Q \), declaring it to be an element of 
\( Q(H; H \otimes H) \). Let \( Q[\mathrm{Clone}] \) be the operad freely 
generated by \( Q \) together with all \( \mathbf{Clone}_H \). More precisely, 
\( Q[\mathrm{Clone}] \) is obtained by the following pushout in the category 
of operads:
\[
\begin{tikzcd}
F(\{\mathbf{Clone}_H\}_{H}) \arrow[r, "\iota"] \arrow[d, hook] & Q \arrow[d] \\
F(\{\mathbf{Clone}_H\}_{H}) \arrow[r] & Q[\mathrm{Clone}]
\end{tikzcd}
\]
where \( F(\{\mathbf{Clone}_H\}_{H}) \) is the free operad on the cloning 
generators, \( \iota \) sends each generator to the trivial element 
\( [0] \in Q \) (the equivalence class of the ideal), and the right vertical 
arrow is the canonical inclusion. The pushout freely adds the cloning 
generators while respecting that they were previously identified with the 
trivial element.

Thus \( \mathbf{Clone}_H \) becomes a well-defined operation in 
\( Q[\mathrm{Clone}](H; H \otimes H) \), establishing claim (1).

\medskip
\noindent
\textbf{Part 3: Persistence of compatible laws and consistency.}

Since \( Q \) is a quotient of \( S \) by an ideal, any equation between 
operations of \( S \) that holds \emph{independently of whether cloning is 
possible} descends to an equation in \( Q \). By the universal property of the 
pushout, these equations further extend to \( Q[\mathrm{Clone}] \), provided 
they do not involve the cloning generators in an essential way.

A law is \emph{compatible with cloning} if it can be expressed without 
referring to operations that are in \( I_{\mathrm{noclone}} \) or without 
relying on linearity properties that fail for \( \mathbf{Clone}_H \). For 
example:
\begin{itemize}
    \item Associativity of sequential composition persists.
    \item The interchange law for parallel and sequential composition persists.
    \item Symmetry isomorphisms persist.
\end{itemize}

In contrast, the standard proof of the no-cloning theorem uses specific 
properties that are \emph{not} compatible with cloning, such as:
\begin{itemize}
    \item The linearity of all CPTP maps.
    \item The specific behavior of the transpose map or certain entangling 
          operations, which by construction belong to \( I_{\mathrm{noclone}} \).
\end{itemize}

Because these operations have been identified in the quotient \( Q \), the 
chain of reasoning that leads to a contradiction in the presence of cloning 
is broken. Consequently, \( Q[\mathrm{Clone}] \) is a consistent operad that 
contains perfect cloning operations while retaining the compositional 
structure of quantum theory that does not depend on the impossibility of 
cloning.

This establishes claims (2) and (3).
\end{proof}

\begin{remark}[Interpretation]
The construction \( Q[\mathrm{Clone}] \) should be viewed as a 
\emph{syntactic} or \emph{formal} extension of quantum theory that allows 
cloning, not as a physically realizable theory. It demonstrates 
operadically which structural features of quantum mechanics are essential 
for the no-cloning theorem and which are incidental. The quotient by 
\( I_{\mathrm{noclone}} \) precisely identifies the operations whose 
properties would lead to inconsistency if cloning were present.
\end{remark}

\begin{definition}[Broadcasting Operation] \label{def:broadcasting}
Let $H$ be a Hilbert space. A \emph{broadcasting operation} is a formal map 
\[
\xi_H: H \longrightarrow H \otimes H
\]
satisfying the following \emph{marginal preservation property} for all pure 
states $\psi \in H$:
\[
\mathrm{Tr}_1(\xi_H(\psi)) = \psi \quad \text{and} \quad 
\mathrm{Tr}_2(\xi_H(\psi)) = \psi,
\]
where $\mathrm{Tr}_1$ and $\mathrm{Tr}_2$ denote the partial traces.

This definition serves as a \emph{specification} of what broadcasting should 
achieve. In the operadic development below, $\xi_H$ will be treated as a 
\emph{formal generator} with this specification, not as an implementable 
quantum operation.
\end{definition}

\begin{remark}[From Specification to Generator]
Definition~\ref{def:broadcasting} provides the \emph{semantic content} of 
broadcasting: what it should do. In the operadic construction that follows, 
we will:
\begin{enumerate}
    \item Treat $\xi_H$ as a \emph{formal generator} with this specification
    \item Construct ideals based on operations that could realize this pattern
    \item Use the specification in Lemma~\ref{lem:no-embed} to prove 
          non-embeddability
\end{enumerate}
Thus, the definition provides the \emph{meaning} that guides the formal 
operadic construction.
\end{remark}

\begin{corollary}[Operadic Formulation of the No-Broadcasting Theorem] \label{cor:nobroadcast}
Let \( S \) be the quantum interaction operad of CPTP maps. There exists a 
\emph{No-Broadcasting Ideal} \( I_{\mathrm{nobroadcast}} \subset S \) such that:
\begin{enumerate}[label=(\roman*)]
    \item The quotient operad \( S / I_{\mathrm{nobroadcast}} \), after formally 
          adjoining the broadcasting generator \( \xi_H \), contains a perfect 
          broadcasting operation.
    \item There exists no faithful operad morphism 
          \( \Phi: (S / I_{\mathrm{nobroadcast}})[\xi] \hookrightarrow 
          \mathrm{LinOp} \) to any operad of linear operations that sends 
          \( \xi_H \) to a linear map satisfying the broadcasting condition.
\end{enumerate}
This provides a categorical encoding of the quantum no-broadcasting theorem: 
the impossibility of broadcasting is reflected in the non-embeddability of the 
formally extended operad into any operad of linear maps.
\end{corollary}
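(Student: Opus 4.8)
The plan is to treat Part~(i) as a verbatim analogue of the cloning quotient of Theorem~\ref{thm:cloning-extension}, and then to reduce the obstruction in Part~(ii) to the pure-state nonlinearity argument already used for the cloning generator $\zeta_H$. For Part~(i) I would first define $I_{\mathrm{nobroadcast}}$ generatively, mirroring Definition~\ref{def:noclone-ideal-generated}: let $\mathcal{F}_{\mathrm{broadcast}}$ be the set of all $f \in S$ admitting an operadic context $C[-]$ for which $C[f]$ realizes the broadcasting specification $\xi_H$ of Definition~\ref{def:broadcasting}, let $J_{\mathrm{broadcast}}$ be the smallest operadic ideal containing $\mathcal{F}_{\mathrm{broadcast}}$, and set $I_{\mathrm{nobroadcast}} := S \setminus J_{\mathrm{broadcast}}$. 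The quotient $Q := S / I_{\mathrm{nobroadcast}}$ is formed by the same hom-set-local equivalence used in Theorem~\ref{thm:cloning-extension}, and the generators $\xi_H \in Q(H; H\otimes H)$ are then formally adjoined by the identical pushout of operads, producing $(S/I_{\mathrm{nobroadcast}})[\xi]$. Well-definedness of composition, identities, and the symmetric action transfers unchanged because $I_{\mathrm{nobroadcast}}$ satisfies the three closure axioms of Definition~\ref{def:noclone-ideal}; by construction $\xi_H$ is then a bona fide element of the extended operad, giving~(i).

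For Part~(ii) the heart is a purely linear-algebraic no-go lemma: no linear map $\xi_H : \mathcal{B}(H) \to \mathcal{B}(H \otimes H)$ can satisfy $\mathrm{Tr}_1 \circ \xi_H = \mathrm{Tr}_2 \circ \xi_H = \mathrm{id}$ on all states once $\dim H \geq 2$. I would prove this in two moves. First, \emph{pure-marginal rigidity}: if a bipartite density operator $\omega$ on $H \otimes H$ has a pure marginal $\mathrm{Tr}_2(\omega) = |\psi\rangle\langle\psi|$, then the support of $\omega$ lies in $|\psi\rangle \otimes H$, so $\omega = |\psi\rangle\langle\psi| \otimes \sigma$; imposing $\mathrm{Tr}_1(\omega) = |\psi\rangle\langle\psi|$ as well forces $\sigma = |\psi\rangle\langle\psi|$. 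Hence any broadcasting map must act on pure states exactly as the cloning assignment $|\psi\rangle\langle\psi| \mapsto |\psi\rangle\langle\psi| \otimes |\psi\rangle\langle\psi|$. Second, I would invoke the same nonlinearity obstruction established above for $\zeta_H$ (the lemma showing $J_{\mathrm{clone}}$ is a proper ideal): since the rank-one projectors span $\mathcal{B}(H)$ while the assignment $P \mapsto P \otimes P$ is quadratic, no linear map can agree with it on all of them. Concretely, evaluating a putative linear $\xi_H$ on $|0\rangle\langle 0|$, $|1\rangle\langle 1|$, and $|+\rangle\langle +|$ and comparing the $|00\rangle\langle00|$ coefficient yields $\tfrac12 \neq \tfrac14$, the desired contradiction.

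Finally I would assemble the operadic conclusion. Suppose toward a contradiction that a faithful operad morphism $\Phi : (S/I_{\mathrm{nobroadcast}})[\xi] \hookrightarrow \mathrm{LinOp}$ sends $\xi_H$ to a linear map satisfying the broadcasting condition. Faithfulness guarantees that $\xi_H$ is not collapsed to a trivial class, so $\Phi(\xi_H)$ is a genuine element of $\mathrm{LinOp}(H; H \otimes H)$ whose two marginals are both the identity; but the lemma shows no such linear map exists. This contradiction proves~(ii) and thereby encodes no-broadcasting as non-embeddability. The main obstacle I anticipate is not the linear algebra but the bookkeeping of the operadic context $C[-]$ in the definition of $J_{\mathrm{broadcast}}$: I must verify that adjoining $\xi_H$ to the quotient does not secretly re-introduce, through some admissible composition with surviving non-ideal operations, a linear realization of broadcasting that would make $\Phi$ possible — that is, that the ideal is large enough to sever every compositional route to a linear broadcasting map, exactly as the proper-ideal argument does for $\zeta_H$.
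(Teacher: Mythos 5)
Your proposal follows essentially the same route as the paper's proof: a generatively defined ideal $J_{\mathrm{broadcast}}$ of operations admitting a broadcasting context, with $I_{\mathrm{nobroadcast}}$ as the maximal ideal avoiding it, the hom-set-local quotient plus pushout adjunction of $\xi_H$ for part (i), and a non-embeddability lemma for part (ii) showing that no linear map can satisfy the broadcasting condition because broadcasting forces cloning on pure states, which is nonlinear. Your treatment is in fact slightly more complete at one point: you actually prove the pure-marginal rigidity step (that $\mathrm{Tr}_2(\omega)=\psi\psi^\dagger$ and $\mathrm{Tr}_1(\omega)=\psi\psi^\dagger$ force $\omega=\psi\psi^\dagger\otimes\psi\psi^\dagger$), which the paper's Lemma~\ref{lem:no-embed} merely asserts, and your concrete coefficient comparison ($\tfrac12\neq\tfrac14$ on qubit states) is a specific instance of the paper's general convexity contradiction.
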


\begin{proof}
We proceed in four steps, parallel to the construction for the no-cloning ideal.

\medskip
\noindent
\textbf{Step 1: Formal broadcasting pattern.}
Let \( \xi_H \) denote the formal broadcasting generator as defined above. 
Crucially, \( \xi_H \) is \textbf{not} an element of the original operad \( S \), 
because any CPTP map in \( S \) is linear, whereas perfect broadcasting would 
require nonlinearity.

\medskip
\noindent
\textbf{Step 2: Construction of the broadcasting-related ideals.}
Define \( \mathcal{F}_{\mathrm{broadcast}} \) as the set of all operations 
\( f \in S \) for which there exists an operadic context \( C[-] \) such that 
the composite pattern \( C[f] \) matches the broadcasting pattern \( \xi_H \).

Let \( J_{\mathrm{broadcast}} \) be the \textbf{smallest operadic ideal} in 
\( S \) containing \( \mathcal{F}_{\mathrm{broadcast}} \). Intuitively, 
\( J_{\mathrm{broadcast}} \) collects all operations that could participate 
in a broadcasting composite.

Define \( I_{\mathrm{nobroadcast}} \) as the \textbf{largest operadic ideal} 
in \( S \) that is disjoint from \( J_{\mathrm{broadcast}} \). Equivalently,
\[
I_{\mathrm{nobroadcast}} = \{ f \in S \mid \text{for all contexts } C[-],\ 
C[f] \text{ cannot match the broadcasting pattern } \xi_H \}.
\]
Operations in \( I_{\mathrm{nobroadcast}} \) are \emph{compositionally 
orthogonal} to broadcasting: no composition involving them can produce 
the broadcasting pattern.

\medskip
\noindent
\textbf{Step 3: Quotient operad and formal adjunction of broadcasting.}
Form the quotient operad \( Q = S / I_{\mathrm{nobroadcast}} \) using the 
set-theoretic construction: for each hom-set, identify all elements of the 
ideal into a single equivalence class while keeping elements outside the 
ideal distinct.

We then formally adjoin the broadcasting generator \( \xi_H \) via a pushout 
construction in the category of operads:
\[
\begin{tikzcd}
F(\{\xi_H\}_H) \arrow[r, "\iota"] \arrow[d, hook] & Q \arrow[d] \\
F(\{\xi_H\}_H) \arrow[r] & Q[\xi]
\end{tikzcd}
\]
where \( F(\{\xi_H\}_H) \) is the free operad on the broadcasting generators, 
and \( \iota \) sends each generator to the trivial element of \( Q \) 
(the equivalence class of the ideal). The resulting operad \( Q[\xi] \) 
contains the formal broadcasting operation \( \mathbf{Broadcast}_H := [\xi_H] \).

\medskip
\noindent
\textbf{Step 4: Non-embeddability into linear operads.}
The connection to the physical no-broadcasting theorem is established by 
the following key lemma.

\begin{lemma}[Non-Embeddability] \label{lem:no-embed}
There exists no injective operad morphism
\[
\Phi: Q[\xi] \hookrightarrow \mathrm{LinOp}
\]
to any operad of linear maps that sends \( \mathbf{Broadcast}_H \) to a 
linear map \( L: H \to H \otimes H \) satisfying the broadcasting condition 
on all states.
\end{lemma}

\begin{proof}[Proof of Lemma]
Assume for contradiction that such a morphism \( \Phi \) exists, and let 
\( L = \Phi(\mathbf{Broadcast}_H) \), a linear map.

For any pure state \( \psi \in H \), the broadcasting condition requires:
\[
\mathrm{Tr}_1(L(\psi\psi^\dagger)) = \psi\psi^\dagger \quad \text{and} \quad
\mathrm{Tr}_2(L(\psi\psi^\dagger)) = \psi\psi^\dagger.
\]
The only map satisfying these conditions for all pure states is the 
(necessarily nonlinear) cloning map \( \psi\psi^\dagger \mapsto \psi\psi^\dagger \otimes \psi\psi^\dagger \).

If \( L \) were linear, it would have to agree with this cloning map on 
all pure states. By linearity, it would then extend to mixed states as 
\( L(\rho) = \rho \otimes \rho \) for all density operators \( \rho \). 
But this map is nonlinear: for distinct density operators \( \rho_1 \neq \rho_2 \) 
and positive scalars \( \alpha, \beta \) with \( \alpha + \beta = 1 \),
\begin{align*}
L(\alpha\rho_1 + \beta\rho_2) 
&= (\alpha\rho_1 + \beta\rho_2) \otimes (\alpha\rho_1 + \beta\rho_2) \\
&= \alpha^2 \rho_1 \otimes \rho_1 + \alpha\beta (\rho_1 \otimes \rho_2 + \rho_2 \otimes \rho_1) 
   + \beta^2 \rho_2 \otimes \rho_2,
\end{align*}
while linearity would require
\[
\alpha L(\rho_1) + \beta L(\rho_2) = \alpha \rho_1 \otimes \rho_1 + \beta \rho_2 \otimes \rho_2.
\]
These expressions differ whenever \( \rho_1 \neq \rho_2 \) and \( \alpha\beta \neq 0 \).

Hence no linear map \( L \) can satisfy the broadcasting condition, 
contradicting the assumption that \( \Phi \) maps \( \mathbf{Broadcast}_H \) 
to an element of \( \mathrm{LinOp} \).
\end{proof}

The lemma teaches us an important fact. While \( Q[\xi] \) formally contains a broadcasting 
operation, it cannot be faithfully represented in any operad of linear maps. 
This categorical non-embeddability precisely captures the quantum 
no-broadcasting theorem.

\end{proof}

\begin{example}[Qubit illustration]
Let us consider a qubit case with a qubit Hilbert space \( H = \mathbb{C}^2 \). The partial trace 
map \( \mathrm{Tr}_2: H \otimes H \to H \) belongs to \( I_{\mathrm{nobroadcast}} \), 
because no composition involving only partial traces and other CPTP maps can 
produce a map that broadcasts arbitrary states. In contrast, a hypothetical 
perfect cloning map (if it existed) would belong to \( J_{\mathrm{broadcast}} \), 
as cloning trivially enables broadcasting.
\end{example}

\begin{remark}[Universality of the Operadic Approach]
The construction demonstrated above is universal: for any forbidden physical 
operation \( \zeta \) (cloning, deleting, signaling, etc.), one can:
\begin{enumerate}
    \item Define a formal generator \( \xi_\zeta \) representing the 
          forbidden pattern.
    \item Construct the ideal \( J_\zeta \) of operations that could 
          participate in producing \( \zeta \).
    \item Define \( I_\zeta \) as the largest ideal disjoint from \( J_\zeta \).
    \item Form the quotient \( S / I_\zeta \) and formally adjoin \( \xi_\zeta \).
    \item Prove non-embeddability into appropriate target operads.
\end{enumerate}
This provides a unified operadic language for quantum no-go theorems, 
revealing their common structural origin as constraints on compositional 
patterns rather than as isolated physical facts.
\end{remark}

\begin{remark}[Comparison with No-Cloning]
The no-broadcasting constraint is strictly weaker than no-cloning. Why? While 
cloning implies broadcasting, the converse fails for mixed states. 
Operadically, this hierarchy is reflected in ideal inclusions:
\[
I_{\mathrm{noclone}} \subseteq I_{\mathrm{nobroadcast}} \subset S.
\]
Any operation that could participate in cloning could certainly participate 
in broadcasting, but the converse is not true. This inclusion provides an 
operadic characterization of the logical relationship between these 
fundamental quantum constraints.
\end{remark}

\bibliographystyle{IEEETran}
\bibliography{Multicategorical_Adjoints_Monadicity_Quantum_Resources_Bib}

\end{document}